\newtheorem{lemma}{Lemma}[section]
\newtheorem{corollary}[lemma]{Corollary}
\newtheorem{claim}[lemma]{Claim}
\newtheorem{theorem}[lemma]{Theorem}
\newtheorem{definition}[lemma]{Definition}
\newtheorem{conjecture}[lemma]{Conjecture}
\date{}
\newcommand{\eps}{\ensuremath{\varepsilon}}
\newcommand{\Prob}{\mathbb{P}}
\newcommand{\E}{\mathbb{E}}
 \newcommand{\Alexey}[1]{}%{{\color{green} #1}}
\author{Peter Keevash\thanks{Mathematical Institute, University of Oxford, Oxford, UK. 
E-mail: keevash@maths.ox.ac.uk.
Research supported in part by ERC Consolidator Grant 647678.}
\and Alexey Pokrovskiy\thanks{Department of Economics, Mathematics, and Statistics, Birkbeck College,  London. dr.alexey.pokrovskiy@gmail.com.}
\and Benny Sudakov\thanks{Department of Mathematics, ETH, 8092 Z\"urich, Switzerland, e-mail: benny.sudakov@gmail.com. 
Research supported in part by SNSF grant 200021-175573.}
\and Liana Yepremyan\thanks{Department of Mathematics, Statistics, and Computer Science, University of Illinois at Chicago, Chicago, USA,
London School of Economics, Department of Mathematics, London, UK,
e-mail:lyepre2@uic.edu, l.yepremyan@lse.ac.uk, 
Research supported by Marie Sklodowska Curie Global Fellowship, H2020-MSCA-IF-2018:846304}}
\title{\vspace{-0.9cm}New bounds for Ryser's conjecture and related problems}
\begin{document}
\maketitle
\begin{abstract}
{A Latin square of order $n$ is an $n \times n$ array filled with $n$ symbols such that each symbol appears only once in every row or column and a transversal is a collection of cells which do not share the same row, column or symbol.  
The study of Latin squares goes back more than 200 years to the work of Euler. One of the most famous open problems in this area is a conjecture of
Ryser-Brualdi-Stein from 60s which says that every Latin square  of order $n\times n$ contains a transversal of order $n-1$. In this paper we prove the existence 
of a transversal of order $n-O(\log{n}/\log{\log{n}})$, improving the celebrated bound of $n-O(\log^2n)$ by Hatami and Shor. Our approach (different from that of Hatami-Shor) is quite general and gives several other applications as well.
We obtain a  new lower bound on a 40 year old conjecture of Brouwer on the maximum matching in Steiner triple systems, showing that every such system of order $n$ is guaranteed to have a matching of size $n/3-O(\log{n}/\log{\log{n}})$. 
This substantially improves the current best result of Alon, Kim and Spencer which has the error term of order $n^{1/2+o(1)}$. Finally,  
we also show that $O(n\log{n}/\log{\log{n}})$ many symbols in Latin arrays suffice to guarantee a full transversal, improving on previously known bound of $n^{2-\eps}$.
The proofs combine in a novel way the semirandom method together with the robust expansion properties of edge-coloured pseudorandom graphs to show the existence of a rainbow matching covering all but $O(\log n/\log{\log{n}})$ vertices.
All previous results, based on the semi-random method, left uncovered at least $\Omega(n^{\alpha})$ (for some constant $\alpha>0$) vertices.}    
\end{abstract}

\section{Introduction}
A Latin square of order $n$ is an $n\times n$ array filled with $n$ symbols so that every
symbol appears only once in each row and in each column. A  transversal is a collection of cells of the Latin square which do not share the same row, column or symbol. A full transversal is a transversal of order $n$.
The study of Latin squares goes back to the work of Euler \cite{euler1782recherches} in 18th century, who asked a question equivalent to ``for which $n$ is there an $n\times n$ Latin square which can be decomposed into $n$ disjoint full transversals?''.  Well known examples of Latin squares are multiplication tables of finite groups. Latin squares have connections to $2$-dimensional permutations, design theory, finite projective planes and error correcting codes. 

It is easy to see that there are many Latin squares without full transversals (for example the addition table of the group $\mathbb{Z}_{4}$)
and it is a hard problem to determine when full transversals exist. This question is very difficult even in the case of multiplication tables of finite groups.
In 1955 Hall and Paige \cite{hall1955complete} conjectured that the multiplication table of a group $G$ has a full transversal exactly if the $2$-Sylow subgroups of $G$ are trivial or non-cyclic.
It took 50 years to establish this conjecture and its proof is based on the classification of finite simple groups (see \cite{wilcox2009reduction} and the references therein). Very recently an alternative proof of this conjecture was found for large groups using tools from analytic number theory~\cite{eberhard2020asymptotic}. 
The most famous open problem on transversals in general Latin squares is the following conjecture of Ryser, Brualdi and Stein~\cite{ryser1967neuere, stein1975transversals, brualdi1991combinatorial}.
 
\begin{conjecture}
\label{prob-partial-ryser-brualdi-stein}
Every $n\times n$ Latin square has a transversal of order $n-1$. Moreover if $n$ is odd it has a full transversal. 
\end{conjecture}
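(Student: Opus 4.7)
The plan is to build on the paper's main technical result---a rainbow matching leaving only $O(\log n/\log\log n)$ uncovered rows, columns, and symbols---and close the remaining small gap via an absorbing argument, a strategy that has been remarkably effective for many extremal problems on matchings and Hamilton cycles. Before running the semi-random process, I would set aside a small ``absorbing'' rainbow matching $A$ with the following robustness property: for every admissible small set of uncovered row/column/symbol triples (of size at most polylogarithmic in $n$, matching the residue of the main process), there is a local reconfiguration of $A$ into another rainbow matching $A'$ on the same rows, columns and symbols as $A$ that additionally covers those triples. After running the semi-random plus robust expansion argument from the paper on the remainder of the Latin square, $A$ would be used to ``absorb'' the residue and extend the transversal to size $n-1$, or to $n$ when $n$ is odd.

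The construction of the absorber $A$ is the crux of the argument. Each basic absorbing gadget would be a constant-size configuration of rainbow cells admitting two rainbow matchings $M_0, M_1$ whose supports (as sets of rows, columns and symbols) differ in exactly one prescribed triple $(r,c,s)$; switching from $M_0$ to $M_1$ then incorporates $(r,c,s)$ into the transversal while preserving everything else. To handle an arbitrary residue of polylogarithmic size, I would combine these gadgets into a universal absorber via a random construction analogous to Montgomery's distributive absorbers for Hamilton cycles, using concentration over the random selection of cells to guarantee that every sufficiently small admissible residue is absorbable. The argument would need to confirm that the absorber coexists with the pseudorandom structure exploited in the paper's semi-random phase, so that reserving $A$ does not degrade the expansion properties on which the main cover depends, and one would need to route the robust-expansion arguments of the paper around the reserved cells.

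For the odd-$n$ refinement, one would additionally have to show that no parity obstruction arises at the final stage, so that the last remaining triple can indeed be absorbed; this parallels the role of the Hall--Paige obstruction for multiplication tables of groups, though here the Latin square is arbitrary and a purely combinatorial argument is required. The main obstacle, and the reason Conjecture~\ref{prob-partial-ryser-brualdi-stein} has resisted proof for over half a century, is the construction of a gadget with true $O(1)$ robustness under the simultaneous row, column, and symbol constraints. Existing absorbing technology for rainbow matchings typically relies on an auxiliary supply of matching-augmenting structures (short alternating cycles on prescribed triples) whose abundance is not guaranteed in a worst-case Latin square, and every known controlled absorbing scheme for Latin squares loses at least super-logarithmic factors. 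Bridging this gap to a genuine constant-size absorbing gadget, while keeping the construction compatible with the near-perfect semi-random cover provided by the paper, is where the principal difficulty lies.
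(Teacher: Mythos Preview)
The statement you are attempting to prove is listed in the paper as Conjecture~\ref{prob-partial-ryser-brualdi-stein}; it is explicitly an \emph{open problem}, not a theorem, and the paper offers no proof of it. What the paper actually proves is the weaker Theorem~\ref{main-ryser}, giving a transversal of size $n-O(\log n/\log\log n)$. There is therefore no ``paper's own proof'' to compare your proposal against.

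As for the proposal itself, it is not a proof but a research programme, and you say as much in your final paragraph: you identify the construction of a constant-size absorbing gadget compatible with the semi-random cover as ``the main obstacle'' and ``where the principal difficulty lies.'' That is an honest assessment, but it means the document contains no argument establishing the conjecture. The outline you sketch---reserve an absorber, run the paper's semi-random plus robust-expansion machinery on the complement, then absorb the polylogarithmic residue---is a plausible line of attack, and indeed is morally the direction subsequent work has taken. However, nothing in your text supplies (i) a concrete gadget whose two rainbow states differ in exactly one prescribed $(r,c,s)$ triple inside an \emph{arbitrary} Latin square, (ii) a proof that enough such gadgets can be found disjointly and assembled into a universal absorber for every admissible residue, or (iii) the parity analysis for odd $n$. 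Each of these is a substantial missing piece, not a routine verification, and without them the proposal does not constitute a proof.
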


Most research towards the Ryser-Brualdi-Stein conjecture has focused on proving that all $n\times n$ Latin squares have large transversals (trying to get as close to $n-1$ as possible). Here Koksma~\cite{koksma1969lower} found transversals of size $2n/3+O(1)$ and Drake~\cite{drake1977maximal} improved this to $3n/4+O(1)$. The first asymptotic proof of the conjecture was obtained by Brouwer, De Vries, and Wieringa~\cite{brouwer1978lower} and independently by Woolbright~\cite{woolbright1978n} who found transversals of size $n-\sqrt{n}$. This was improved 
in 1982 by Shor~\cite{shor1982} to $n-O(\log^2 n)$. His paper had a mistake which was later rectified, using the original approach, by Hatami and Shor \cite{hatami2008lower}. For the last, nearly forty years, this was the 
best known bound for the Ryser-Brualdi-Stein conjecture. Our first theorem improves this result as follows.

\begin{theorem}\label{main-ryser}  There exist a constant $k$  such that every $n\times n$ Latin square  contains a transversal of order $n- k\frac{\log{n}}{\log{\log{n}}}$.
\end{theorem}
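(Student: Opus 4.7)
The plan is to view the Latin square as a properly edge-coloured complete bipartite graph $K_{n,n}$: the edge between row $i$ and column $j$ gets the colour of the symbol in cell $(i,j)$, each colour class is a perfect matching, and a partial transversal of size $t$ is precisely a rainbow matching of size $t$. The goal is thus a rainbow matching of size $n - k\log n/\log\log n$.

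First I would run a semi-random (R\"odl nibble) phase adapted to rainbow matchings: in each round independently sample a small fraction of the available edges, greedily extract a rainbow matching from the sample, and delete the used rows, columns and colours. A Pippenger--Spencer style martingale argument, carried out in the $3$-partite $3$-uniform hypergraph of admissible (row, column, colour) triples, shows that the residual hypergraph stays near-regular and pseudorandom throughout. After the appropriate number of rounds this produces a partial rainbow matching whose uncovered set $U$ has size $m_0 = n^{o(1)}$, while the residual edge-coloured graph $H$ on $U$ retains a robust quasirandom structure.

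The novel ingredient is then an iterative augmentation phase driven by the robust expansion of the edge-coloured pseudorandom graph $H$. Given a current rainbow matching $M$ with $m = |U|$ uncovered vertices and unused colours, I would grow from any uncovered vertex a rainbow alternating tree in $H$ of branching factor $b = \Theta(\log n)$ and depth $d = O(\log m/\log\log n)$; robust expansion guarantees that, even after forbidding the $O(m)$ already-used colours and vertices, the tree reaches $b^d \gg m$ distinct endpoints. Pigeonholing these against the uncovered set then yields an augmenting switch that strictly enlarges $M$ while modifying only $O(d)$ of its edges. Each switch perturbs the pseudorandom parameters of $H$ only negligibly, so the step iterates; $m$ shrinks until $d = O(\log m/\log\log n)$ becomes too small for $b^d$ to dominate $m$, which happens at $m = \Theta(\log n/\log\log n)$, matching the target bound.

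The main obstacle will be sustaining the alternating-tree argument in the small-$m$ regime. There the depth is essentially $\log n/\log\log n$, the tree has $n^{\Omega(1)}$ branches, and at every internal node one must avoid the $O(m)$-sized forbidden palette while keeping the pseudorandom count of rainbow extensions. Calibrating $b = \Theta(\log n)$ against this forbidden set so that each extension step still succeeds with positive density, and tracking the cumulative perturbation of $H$ across all $n - O(\log n/\log\log n)$ augmentations so that its pseudorandomness survives, is the core technical difficulty that pins down the final error term. A secondary challenge is that pseudorandomness must be maintained simultaneously in three projections (rows, columns, colours), which forces sharper concentration estimates during the nibble phase and a robust-expansion statement tailored to edge-coloured bipartite graphs rather than ordinary graphs.
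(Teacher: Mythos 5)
Your high-level outline (Latin square $\to$ properly edge-coloured $K_{n,n}$, semi-random phase to get a near-spanning rainbow matching, then iterative augmentation via alternating walks exploiting pseudorandomness) does follow the paper's architecture, and the observation that the final error term comes from balancing path length against available colours is the right intuition. But there are two real gaps.

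The more fundamental one is where the expansion comes from. You attribute it to ``robust expansion of the edge-coloured pseudorandom graph $H$'' and treat it as a property of the residual coloured graph alone. That is not sufficient, and the paper explicitly warns against this: let $D$ be the graph of unused colours and $M$ the current matching. It is easy to construct a properly coloured $K_{n,n}$, a nearly $d$-regular $D$, and a perfect matching $M$ for which there are linear-size sets $S$ with $|N^2_{D,M}(S)| = |S|$ (take $D$ a disjoint union of $K_{d,d}$'s and $M$ respecting those blocks). So for a general (adversarial, or nibble-produced) matching $M$, alternating $D$--$M$ walks need not branch out at all, no matter how pseudorandom the underlying coloured $K_{n,n}$ is. The paper's central technical move is to make the \emph{matching itself} random: the first bite $M_0$ is a random sparsification with every edge chosen at rate $q/n$ and collisions deleted, and it is shown that for \emph{this} $M_0$, against \emph{every} nearly regular colour-graph $D$, the pair $(D,M_0)$ expands in second/third alternating neighbourhoods with high probability, with a containers-style union bound over the sets $S$. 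The expansion is then carried through to the full matching $M = M_0 \cup M_1$ because the relevant alternating 4-walks pass through $M_0$ in the middle. Your proposal never identifies the need to randomise the matching and has no mechanism to rule out the bad $K_{d,d}$ configuration; without this, the claim that the alternating tree reaches $b^\delta \gg m$ distinct endpoints has no support.

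The second gap is the claim that the nibble phase alone produces $m_0 = n^{o(1)}$ uncovered vertices. What a Pippenger--Spencer/Kahn/Alon--Kim--Spencer style nibble actually delivers in this coloured bipartite setting is a rainbow matching of size $n - n^{1-\gamma}$ for some constant $\gamma > 0$; that is precisely the black-box input the paper takes (Lemma~2.6). The only way anyone has gotten below a polynomial remainder is by some extra structural argument on top of the nibble (Hatami--Shor's $O(\log^2 n)$, or the switching argument here). So your phase-one target is unattainable as stated, and your phase-two argument is being asked to cover less ground than it actually must. Relatedly, some of your parameters do not hang together: the branching factor of a rainbow alternating tree at an unused-colour step is bounded by the number $m$ of unused colours (one edge per colour through a vertex in a proper colouring), so $b = \Theta(\log n)$ is too large once $m = \Theta(\log n/\log\log n)$, and your stopping criterion ``$b^\delta$ stops dominating $m$'' does not actually pin down $m = \Theta(\log n/\log\log n)$; the real bottleneck in the paper is that the alternating path has length $\Theta(\log n/\log d)$, and for it to be \emph{rainbow} one needs this length to be at most $d$, forcing $d \gtrsim \log n/\log\log n$.
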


A Latin array is $n\times n$ square filled with an  arbitrary number of symbols such that  no symbol appears twice in the same row or column. Latin arrays are natural extensions of Latin squares, and  also been extensively studied.
A familiar example of such an array is a multiplication table between elements of two subsets of equal size in some group. It is generally believed that extra symbols in a Latin array should help to find  full transversals.
Motivated by this Akbari and Alipour \cite{akbari2004transversals} conjectured that any Latin array of order $n$
with at least $n^2/2$ different symbols contains a full transversal. Progress towards this conjecture was independently obtained by
Best, Hendrey, Wanless, Wilson and Wood \cite{best2018transversals}
(who showed that $(2-\sqrt{2})n^2$ symbols suffice)
and Bar\'at and Nagy \cite{barat2017transversals} 
(who showed that $3n^2/4$ symbols suffice). Very recently Montgomery, Pokrovskiy, Sudakov~\cite{montgomery2019decompositions} and  Keevash, Yepremyan~\cite{keevash2020number} independently showed that $n^{2-\eps}$ many symbols suffice to  guarantee a full transversal.
Here we substantially improve these results. 

\begin{theorem}\label{main-manysymbols}  There exist a constant $k$ such that every $n\times n$ Latin array filled with $kn\log{n}/\log{\log{n}}$ many symbols contains a full transversal.
\end{theorem}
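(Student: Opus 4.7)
The plan is to translate the statement into a rainbow matching problem and then reuse the machinery built for Theorem \ref{main-ryser}, with the large palette of symbols providing the extra flexibility needed to promote a near-perfect rainbow matching to a perfect one. View the Latin array as a properly edge-coloured $K_{n,n}$ with row-vertices and column-vertices, where edge $rc$ receives the colour of the symbol in cell $(r,c)$. Each colour class is a matching and a full transversal is a rainbow perfect matching. Writing $N \geq kn\log n/\log\log n$ for the number of colours, the $n^{2}$ edges are distributed with average colour-class size at most $n\log\log n/(k\log n)$, so all but a negligible fraction of classes are thin, of size $O(n\log\log n/\log n)$.

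Set $t = c\log n/\log\log n$. The first step is to reserve in advance, by uniformly random sampling, a subfamily $\mathcal{R}$ of $\Omega(n\log n/\log\log n)$ thin colour classes and delete their edges from the graph. The remainder is still a properly edge-coloured bipartite graph in which every colour class is a matching, and applying the semi-random nibble plus robust-expansion argument that underlies Theorem \ref{main-ryser} yields a rainbow matching $M_{0}$ of size $n-t$ using only non-reserve colours. Because $\mathcal{R}$ was fixed before the nibble began, a direct concentration argument shows that $\bigcup\mathcal{R}$ restricted to the $t\times t$ subarray indexed by the still-unmatched rows $R$ and columns $C$ remains quasirandom, with on average $\Theta(|\mathcal{R}|\,t^{2}/n^{2})$ edges per reserve colour.

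The second step extends $M_{0}$ to a full rainbow matching by finding a rainbow perfect matching between $R$ and $C$ whose colours lie in $\mathcal{R}$. I would iterate a short rainbow augmentation: at each stage, let $R',C'$ be the current unmatched sets, pick $r \in R'$, and search for either a reserve-coloured edge of a previously unused colour from $r$ to some $c \in C'$, or failing that a rainbow alternating path of length three through one already-matched cell that shrinks $|R'|$ and $|C'|$ by one. Each round consumes $O(1)$ reserve colours, so across all $t$ rounds at most $O(\log n/\log\log n)$ reserve colours are used, a negligible fraction of $|\mathcal{R}|=\Omega(n\log n/\log\log n)$.

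The main obstacle is verifying that such augmenting paths always exist as $R',C'$ shrink. This amounts to a rainbow expansion statement in the reserve-coloured bipartite graph restricted to $R\cup C$, and it is a routine consequence of the quasirandomness in step one \emph{provided} the conditioning used in the semi-random nibble does not destroy the pseudorandomness of the reserve. Committing to $\mathcal{R}$ before the nibble runs is exactly what buys independence: the analysis of the nibble sees only non-reserve colours, so none of its concentration estimates interacts with $\mathcal{R}$, and the proof of Theorem \ref{main-ryser} can be invoked off the shelf on the reduced colour palette.
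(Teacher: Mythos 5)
The reservation idea has a gap that is difficult to repair: you have no control over how many edges the reserve colours contribute to the leftover $t\times t$ block, and in a typical bad case that contribution is vanishingly small. Knowing that the \emph{average} colour-class size is $O(n\log\log n/\log n)$ tells you nothing about the total number of edges carried by the thin classes. For example, the colouring could consist of roughly $n$ ``large'' colour classes of size $\Theta(n)$ carrying almost all $n^{2}$ edges (as in a Latin square), plus $N-O(n)$ colours each appearing once. Your random reserve $\mathcal{R}$ is then a collection of essentially singleton colours with total size at most $|\mathcal{R}|$, so the expected number of reserve edges landing in the $t\times t$ subarray indexed by the unmatched rows $R$ and columns $C$ is about $|\mathcal{R}|\,t^{2}/n^{2} = O\!\left(\log^{3}n\,/\,(n(\log\log n)^{3})\right) = o(1)$. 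There are essentially \emph{no} reserve edges to augment with, and the second stage cannot even start. The second stage is also more delicate than ``routine'': it is the original problem in miniature --- a rainbow matching in a many-coloured bipartite graph --- and the alternating-path search you describe needs an expansion statement on the reserve-coloured graph between $R\cup C$ and $V(M_{0})$ that simply fails in the scenario above.

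The paper sidesteps this by invoking a structural dichotomy rather than an averaging bound. It first applies the Montgomery--Pokrovskiy--Sudakov theorem to reduce to the case where at least $(1-\eps_{0})n$ colours are ``large'' (appearing at least $(1-\eps_{0})n$ times). Writing $n-t$ for the number of large colours, it builds a \emph{small} rainbow matching $M_{0}$ of size $t+6d$ (with $d=\Theta(\log n/\log\log n)$) using only small-colour edges, via a greedy pass on medium colours together with a minimum-degree lemma for the tiny-colour subgraph; the key point is that the minimum tiny-colour degree is at least $t-m$ by the definition of $t$, which is a deterministic pigeonhole fact, not an average. Only \emph{then} does it run the typicality/nibble machinery, and it runs it on the graph of \emph{large} colours with the vertices of $M_{0}$ (and a few low-degree vertices) removed; this graph has $n-t=n'+6d$ large colours, exactly the slack that the technical theorem requires. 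The order of operations is thus the reverse of yours: the small-colour matching is built first, and the ``spare'' colours that the expansion argument consumes are large colours already controlled by the typicality hypotheses, not a random reserve of possibly-singleton small colours.
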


\noindent
It is worth pointing out that the problem of Akbari and Alipour is closely related to finding transversals in Latin squares, namely Conjecture \ref{prob-partial-ryser-brualdi-stein}. In particular, the last theorem implies 
Theorem \ref{main-ryser}. Indeed, start with an $n \times n$ Latin square and then substitute distinct new symbols in the first $k\log{n}/\log{\log{n}}$ rows, such that every symbol is used only once. Then Theorem \ref{main-manysymbols} guarantees us a full transversal. Since this transversal can use at most $k\log{n}/\log{\log{n}}$ cells from the first $k\log{n}/\log{\log{n}}$ rows, upon removing these we are left with a transversal of the original Latin square which has size $n-k\log{n}/\log{\log{n}}$. 

All the above results and problems can be rephrased as statements about matchings in hypergraphs. To see this, we construct from an $n\times n$ Latin square $L$
the following  $3$-uniform hypergraph $\mathcal H$ on $3n$ vertices.  The vertices of $\mathcal{H}$ are $V(\mathcal H)=R\cup C\cup S$ where $R$ are the rows of $L$, $C$ the columns of $L$, and $S$ the symbols of $L$. There is an edge in $\mathcal{H}$ for every entry of $L$. If the $(i,j)$-th entry of $L$ has symbol $s$, then $\{i,j,s\}$ is a hyperedge of $\mathcal{H}$. It is easy to check that under this transformation, the hypergraph we obtain is $n$-regular, there is exactly one edge containing a given pair of vertices,  and that transversals in $L$ correspond to matchings in $\mathcal H$.

The problem of finding nearly perfect matchings in regular hypergraphs has a long history in discrete mathematics and such results have many applications to other problems as well. For example R\"odl~\cite{rodl1985packing} proved the Erd\H{o}s-Hanani Conjecture on existence of approximate designs by essentially showing that regular hypergraphs with bounded codegrees have nearly-perfect matchings. This paper introduced the celebrated technique of ``R\"odl's nibble'' which is a versatile approach for finding large matchings in hypergraphs in semi-random manner. 
One famous example of a regular hypergraph with bounded codegress is a Steiner triple system, which is a $3$-uniform hypergraph on $n$ vertices in which every pair of vertices is in a unique edge.
The existence of such triple systems was established by Kirkman in 1847. By definition, this hypergraph is $(n-1)/2$-regular and has all codegrees equal to one. 
The problem of existence of large matchings in Steiner triple systems was posed about forty years ago by Brouwer~\cite{brouwer1981size}.

 \begin{conjecture}{\label{kahn-conjecture}} Every Steiner triple system of order $n$ contains a matching of size $(n-4)/3$.
 \end{conjecture}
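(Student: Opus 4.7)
The plan is ambitious, because Brouwer's conjecture demands a matching missing only four vertices, which is vastly tighter than the $O(\log n/\log\log n)$ slack delivered by the rest of this paper. The natural route is to layer an \emph{absorption} argument on top of the semirandom plus pseudorandom-expansion machinery of the paper: first reserve a small ``absorber'' $A$ inside the Steiner triple system $S$, then apply the paper's method to $V(S)\setminus V(A)$ to leave a tiny leftover set $T$, and finally use $A$ to swallow $T$ into the matching so that only $O(1)$ vertices, ideally exactly four, remain uncovered.

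Concretely, the first step would be to choose, by a careful random sparsification of $V(S)$, a set $A\subset V(S)$ of size $o(n)$ together with an almost-perfect matching $M_A$ on $V(A)$ and a large family of ``absorbing gadgets'' with the following property: for every set $T\subset V(S)\setminus V(A)$ with $|T|\le C\log n/\log\log n$ (and of the right parity), the substructure on $V(A)\cup T$ contains a matching covering all of $T$ and all but at most three vertices of $V(A)$. In a Steiner triple system each pair lies in exactly one triple, so one cannot copy graph-theoretic absorbing templates verbatim; instead each gadget has to be realised by a short alternating augmenting walk of length $4$ or $6$ with respect to $M_A$, and one would invoke the strong quasirandomness of Steiner triple systems to show that every candidate $T$ has polynomially many pairwise disjoint gadgets available, so that a random $A$ works with positive probability.

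Having fixed $A$, the second step applies the paper's semirandom nibble together with the robust expansion of the auxiliary edge-coloured pseudorandom graph --- exactly as in the argument behind Theorem~\ref{main-ryser}, adapted to an $(n-1)/2$-regular linear $3$-graph --- to the Steiner triple system induced on $V(S)\setminus V(A)$. This yields a matching covering all but a set $T$ of size $O(\log n/\log\log n)$. The third step then feeds $T$ into $A$ via the absorbing gadgets to produce a matching of $V(S)$ missing only $O(1)$ vertices; if the absorber is designed tightly enough the count matches the conjectured bound $(n-4)/3$.

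The main obstacle is unmistakably the construction of the absorber. Steiner triple systems are rigid: there is no redundancy among pairs, and short augmenting walks can fail to exist for structural reasons (e.g.\ near sub-designs or configurations forced by the incidence constraints). Any successful attack would need to either (i) leverage the now-available quasirandomness and spread-measure techniques for Steiner triple systems to show that flexible gadgets exist in abundance for \emph{every} small $T$, or (ii) refine the expansion step so that the final uncovered set is not merely $O(\log n/\log\log n)$ in size but is also constrained to lie in a prescribed ``swappable'' region where a deterministic absorber can act. Pushing the error term from $O(\log n/\log\log n)$ all the way down to the constant four is precisely why Brouwer's conjecture remains open, and any genuine proof would have to make one of these two programmes go through.
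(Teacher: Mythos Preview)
The statement you are addressing is a \emph{conjecture}, not a theorem: the paper states Brouwer's conjecture and then proves the weaker Theorem~\ref{main-steiner}, which leaves $O(\log n/\log\log n)$ vertices uncovered. There is no proof in the paper to compare your proposal against, because the paper does not claim to prove this statement; indeed the introduction explicitly presents the $O(\log n/\log\log n)$ bound as the new contribution and Brouwer's conjecture as still open.

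Your proposal is not a proof but a research programme, and you say as much in your final paragraph. The genuine gap is the absorber construction: you need, for \emph{every} Steiner triple system and \emph{every} leftover set $T$ of size $O(\log n/\log\log n)$, a preselected $A$ that can absorb $T$ down to at most four uncovered vertices. You correctly note that the rigidity of Steiner triple systems (each pair in exactly one triple, possible sub-designs) obstructs the usual absorbing templates, but you do not supply any mechanism to overcome this. Neither option (i) nor option (ii) in your last paragraph is carried out; they are listed as directions one ``would need'' to pursue. In particular, the step ``one would invoke the strong quasirandomness of Steiner triple systems to show that every candidate $T$ has polynomially many pairwise disjoint gadgets available'' is exactly the missing ingredient --- no such statement is known for arbitrary Steiner triple systems, and without it the argument does not close. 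As written, your text is an honest outline of why the problem is hard, not a proof.
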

Over the years this conjecture attracted a lot of attention. Wang~\cite{wang1978self} showed that  every Steiner triple system has a matching of size $2n/9-O(1)$. Lindner  and Phelps~\cite{lindner1978note} found a matching of size $4n/15-O(1)$. 
Brouwer~\cite{brouwer1981size} obtained the first asymptotic result by finding matchings of size $n/3-O(n^{2/3})$. Using a clever refinement of R\"odl's nibble combined with large deviation inequalities, Alon, Kim, and Spencer~\cite{alon1997nearly} obtained the best current bound. They show the existence of a matching covering all but  $O(n^{1/2}\log^{3/2} n)$ vertices. Here we improve this twenty year old result and obtain the first sub-polynomial upper bound on the number of vertices uncovered by the maximum matching. 
\begin{theorem}\label{main-steiner} There is a constant $k$ such that every Steiner triple system $S$ on $n$ vertices has a  matching  of size at least $n/3-k\log{n}/\log{\log{n}}$.
\end{theorem}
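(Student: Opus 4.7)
The plan is to reduce Theorem \ref{main-steiner} to a rainbow matching problem in an edge-coloured pseudorandom graph, paralleling the strategy behind Theorems \ref{main-ryser}--\ref{main-manysymbols}. Encode the STS $S$ as the following $n$-edge-colouring $\chi$ of $K_n$: for each vertex $c\in V(S)$, the colour class $c$ consists of all pairs $\{a,b\}$ with $\{a,b,c\}\in S$; by the Steiner property this is a perfect matching on $V(S)\setminus\{c\}$. A matching of size $m$ in $S$ then corresponds precisely to a rainbow matching of $m$ edges in $(K_n,\chi)$ whose set of colours used is disjoint from its set of covered vertices. The goal becomes: find such a ``strong'' rainbow matching leaving only $O(\log n/\log\log n)$ vertices uncovered. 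The advantage of this formulation is that $(K_n,\chi)$ is extremely pseudorandom --- every colour class is a near-perfect matching and codegrees in the associated $3$-uniform hypergraph are all $1$ --- so the machinery developed for the earlier theorems is in scope.

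I would proceed in two stages. First, run a semi-random (nibble) process: in each round, keep each currently available triple of $S$ independently with a small probability $\delta$ and delete conflicts, adding surviving triples to the partial matching. Standard concentration arguments (in the spirit of R\"odl--Spencer and Alon--Kim--Spencer) show that one can maintain throughout that the residual hypergraph is quasi-regular with codegrees $O(1)$ and the auxiliary edge-coloured graph remains appropriately pseudorandom; after $t$ rounds the residual degree is $\Theta((1-\delta)^t(n-1)/2)$ and only $O(n(1-\delta)^t)$ vertices are uncovered. Iterated naively this stalls around $n^{1/2+o(1)}$ uncovered vertices, which is the Alon--Kim--Spencer barrier.

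The main novelty, and the step I expect to be the principal obstacle, is to drive the uncovered set from $n^{1/2+o(1)}$ all the way down to $O(\log n/\log\log n)$. The plan is to continue the semi-random process for a further $O(\log n/\log\log n)$ ``boosted'' rounds in which the concentration machinery is replaced by a deterministic guarantee coming from the robust expansion of $(K_n,\chi)$: for every still-uncovered vertex $v$ one exhibits many short rainbow alternating paths in the current residual colouring that, when switched into the existing partial matching, absorb $v$ into it. The expansion input ensures that a constant fraction of the uncovered vertices can be absorbed simultaneously at each round, so the uncovered set shrinks by a constant factor per iteration. The hard part is to verify that this robust expansion really persists once the residual hypergraph becomes sparse: at that scale each switching involves a path of length growing with the round, many switchings share rare edges, and one must control both the dependence between simultaneous switchings and the loss of pseudorandomness in the residual colouring. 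The threshold at which expansion finally breaks is precisely what produces the $O(\log n/\log\log n)$ term in the theorem.
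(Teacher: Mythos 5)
Your high-level blueprint (semi-random matching followed by expansion-driven switching) is the right one, and you correctly identify the bottleneck, but there is a concrete gap in the reduction step that prevents your proposal from plugging into the machinery the paper develops.

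You encode $S$ as an $n$-edge-colouring of $K_n$ and then seek a rainbow matching whose \emph{colour set is disjoint from its vertex set}. That extra disjointness constraint is not something the paper's rainbow-matching lemmas (Lemma~\ref{Lemma_main_expansion}, Theorem~\ref{Theorem_typical_technical}, Corollary~\ref{Corollary_typical_matching}) deliver: those lemmas live on \emph{bipartite} graphs where ``vertex'' and ``colour'' are disjoint types by construction, so rainbow matchings automatically translate to hypergraph matchings. In your $K_n$ picture every element of $V(S)$ plays the role of a potential vertex \emph{and} colour simultaneously, and nothing in the nibble plus alternating-path toolkit prevents the matching from covering a vertex $v$ while also using colour $v$; controlling this would require re-proving the whole framework in this symmetric setting. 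What the paper actually does, and what your sketch omits, is a \emph{random tripartition}: split $V(S)$ into three equal parts $(A,B,C)$ (with positive probability $|A|=|B|=|C|=n/3$), keep only the edges of $S$ that hit each part exactly once, and interpret the result as a properly edge-coloured bipartite graph between $A$ and $B$ with colour set $C$. Now the disjointness is automatic, and one verifies (Lemma~\ref{Lemma_random_subgraph_typical_graph}(b)) that the resulting graph is coloured $(\eps,p,n/3)$-typical, so Corollary~\ref{Corollary_typical_matching} applies directly to give a rainbow matching of size $n/3 - O(\log n / \log\log n)$, which is a matching of $S$ of the same size. Without this tripartition the ``machinery developed for the earlier theorems'' is not, in fact, in scope.

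A couple of smaller misalignments: the R\"odl nibble by itself (Lemma~\ref{Lemma_variant_of_MPS_nearly_perfect_matching}) leaves $n^{1-\gamma}$ vertices uncovered for some small $\gamma$, not $n^{1/2+o(1)}$ --- the $\sqrt n$ is the Alon--Kim--Spencer refinement, which the paper does not use. And the endgame is not ``absorb a constant fraction of uncovered vertices per round'': the paper's argument adds vertices to the matching \emph{one pair at a time}, each step being a switching along $O(\log n/\log\log n)$-length alternating rainbow paths produced by Lemma~\ref{Lemma_expansion_paths}, with the termination threshold $\Theta(\log n/\log\log n)$ arising because that many unused colours are needed to keep such a path rainbow, not because expansion ``breaks'' at a sparsity threshold.
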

Our methods combine in a novel way the R\"odl's nibble together with the robust expansion properties of edge-coloured pseudorandom graphs and apply in far more general settings than any of the above theorems and conjectures. The main technical theorem we prove can be used to show that $3$-uniform hypergraphs satisfying certain pseudorandomness properties have a  matching covering all but $O(\log n /\log{\log{n}})$ vertices. All previous comparable theorems left $n^{\alpha}$ vertices uncovered.

\subsection{Proof ideas}\label{Section_proof_sketch}

\subsubsection*{Coloured graphs and rainbow matchings}
Although our main results are about transversals in Latin arrays/squares and matchings in hypergraphs, all our proofs will take place in a different setting. This will be the setting of finding \emph{rainbow matchings} in \emph{properly edge-coloured} complete bipartite graphs. Recall that a \emph{proper} edge-colouring of a graph is one where all edges incident to the same vertex have different colours. A  matching in a coloured graph is rainbow  if all its edges have different colours.  A \emph{linear} hypergraph is a hypergraph in which every pair of vertices lies in at most one edge. In this paper we will use extensively that the following three kinds of objects are equivalent:
\begin{itemize}
\item An $n\times n$ Latin array filled wth $m$ symbols.
\item A linear $3$-partite, $3$-uniform hypergraph with partition sizes $(n,n,m)$.
\item A properly edge-coloured complete bipartite graph $K_{n,n}$ with $m$ colours.
\end{itemize} 

The connection between Latin arrays and linear hypergraphs was already described in the introduction. To see the reduction to coloured graphs consider an $n\times n$ Latin array $L$ filled with $m$ symbols. Using it we can construct the following  proper edge-colouring of  $K_{n,n}$. Label the vertices of $K_{n,n}$ by $\{x_1, \dots, x_n, y_1, \dots, y_n\}$, and join $x_i$ to $y_j$ with a colour $\ell$ edge whenever the $ij$-th  entry of $L$ is $\ell$. This is a proper edge-colouring with $m$ colours due to the properties of Latin arrays. A size $t$ transversal in the Latin array corresponds to a rainbow matching with $t$ edges in $K_{n,n}$. Note that in case of Latin squares we have $m=n$ in the above statement. Thus under this transformation, Theorem~\ref{main-ryser} is equivalent to the following.

\begin{theorem}\label{Theorem_Ryser_Rainbow_Version}
There exists a constant $k$ such that every properly $n$-edge-coloured $K_{n,n}$ has a rainbow matching of size $n-k\frac{\log n}{\log \log n}$.
\end{theorem}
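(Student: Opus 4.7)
The plan is to prove Theorem \ref{Theorem_Ryser_Rainbow_Version} in two phases: a semi-random (nibble) phase that reduces the problem to a small pseudorandom residual, and a \emph{booster} phase that uses robust expansion of the coloured residual to shrink the uncovered set down to $O(\log n / \log\log n)$. Given a proper $n$-edge-colouring of $K_{n,n}$, I would encode everything as a $3$-uniform linear hypergraph $\mathcal{H}$ on rows, columns, and colours whose edges correspond to the cells of the associated Latin square, thinking of each colour class as a perfect matching.

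For phase one, I would run a carefully tracked R\"odl nibble on $\mathcal{H}$: in each of a polylogarithmic number of rounds, sample a small random subset of remaining hyperedges, retain a rainbow matching among them, and update $\mathcal{H}$ by deleting conflicts. Using martingale concentration (Azuma, or the differential equation method) one argues that at the end the resulting rainbow matching $M_0$ leaves a set $U$ of uncovered vertices of size $s_0 \le n^{1-\delta}$ per side, and that the residual configuration is \emph{pseudorandom}: every uncovered row or column has close to the expected number of unused-colour edges to uncovered vertices on the other side, uniformly over small ``forbidden'' colour sets and small vertex subsets. This is standard in spirit but requires tracking many correlated parameters at once, including colour-degrees that classical nibble analyses do not explicitly maintain.

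Phase two is the new ingredient. Given a rainbow matching $M$ with $s$ uncovered vertices per side, I would look for a rainbow \emph{alternating augmenting path} $P$ connecting two uncovered vertices and alternating between edges of $M$ and edges of unused colours; replacing $M$ by $M \oplus P$ then shrinks $U$ by one. The key observation is that an alternating BFS from an uncovered vertex, stepping via an unused-colour edge and then the matching edge of $M$, grows by a factor of roughly $s$ every two steps by pseudorandomness of the residual. Hence it reaches $\Omega(n)$ vertices in $O(\log n / \log s)$ steps and almost surely hits another uncovered vertex, producing an augmenting path of length $O(\log n / \log s)$. Iterating from $s = s_0$ down to $s = \Theta(\log n / \log\log n)$ yields the rainbow matching of the claimed size.

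The main obstacle is maintaining the pseudorandomness of the edge-coloured residual both throughout the nibble and after each individual augmentation, so that the very next augmentation can still find a short alternating path. Each augmentation permanently consumes one unused colour and a few edges, which could in principle destroy the expansion needed for the next round; a quantitative ``resilience'' lemma is required to show that the residual remains a robust coloured expander through all $s_0$ augmentations. Below $s \approx \log n / \log\log n$ the augmenting paths would need length greater than $s$, which forces them to re-enter the shrinking uncovered set and breaks the alternation — this is exactly why the final bound in Theorem \ref{Theorem_Ryser_Rainbow_Version} is $n - k\log n / \log\log n$ rather than something smaller.
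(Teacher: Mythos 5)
Your high-level plan --- a semi-random phase producing a near-perfect rainbow matching plus some pseudorandom residual structure, followed by augmentation along short alternating paths found via expansion --- is the same strategy the paper uses, and your identification of $\log n/\log\log n$ as the natural barrier is correct. However, the quantitative claim powering phase two, that an alternating BFS ``grows by a factor of roughly $s$ every two steps by pseudorandomness of the residual,'' is false as stated, and the whole technical weight of the paper goes into getting around exactly this. Write $D$ for the graph of unused-colour edges and $M$ for the current matching. If $D$ happens to decompose into vertex-disjoint $K_{d,d}$-blocks and $M$ respects that block structure (both perfectly consistent with the local degree/codegree pseudorandomness you track), then the two-step alternating neighbourhood of any set $S$ is trapped in the same blocks as $S$ and does not grow at all. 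Section~\ref{sec:expansion} is devoted to this obstruction: second neighbourhoods need not expand, so one proves expansion only in the \emph{third and fourth} neighbourhoods, exploiting that a rainbow matching $M_0$ arising from a single random ``bite'' scrambles the block structure of any fixed $D$. Even then one cannot union-bound over all sets $S$ of size $n/d$ against a failure probability of only $e^{-\Theta(n/d)}$; this forces the refinement in Definition~\ref{def:expander} that every large $S$ contains a much smaller $S'$ of size $\Theta(n/d^2)$ which already expands. That $S'$ device is doubly load-bearing: it closes the union bound, and it makes the expansion resilient to the perturbations each augmentation causes (Lemma~\ref{aux:exp1toexp2}), which is exactly the ``resilience lemma'' you correctly flag as needed but do not supply. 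Your single-vertex-BFS formulation of expansion has no such robustness, since expansion from one vertex after an augmentation does not follow from expansion before it.

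Two further points. Your plan to run the entire nibble while tracking colour-degree pseudorandomness of the residual is considerably harder than what is required: the paper proves expansion only for a \emph{single} random bite $M_0$ (Lemma~\ref{Lemma_main_expansion}), then extends $M_0$ to a nearly perfect matching by invoking the ordinary nibble as a black box (Lemma~\ref{Lemma_variant_of_MPS_nearly_perfect_matching}), and shows the expansion is inherited essentially for free; no multi-round tracking of coloured residual parameters is needed. And your stated reason the bound bottoms out at $\Theta(\log n/\log\log n)$ --- that long paths ``re-enter the shrinking uncovered set'' --- is not right: the alternating paths pass through $M$ and $D$ and have no particular tendency to revisit uncovered vertices except at their endpoints. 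The real constraint is that the augmenting path must be \emph{rainbow}, so its roughly $\log n/\log s$ non-matching edges must use distinct colours drawn from a budget of only $s$ unused colours; this forces $s$ to be at least of order $\log n/\log\log n$, and certifying rainbowness costs a separate counting argument (Lemma~\ref{Lemma_expansion_conversion}) that again relies on the $S'$ refinement.
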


\noindent
Similarly Theorem~\ref{main-manysymbols} has the following equivalent form.
\begin{theorem}\label{Theorem_ManySymbol_Rainbow_Version}
There exists a constant $k$ such that every properly edge-coloured $K_{n,n}$ with $kn\frac{\log n}{\log \log n}$ colours has a rainbow perfect matching.
\end{theorem}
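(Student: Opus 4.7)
The strategy is to work entirely in the edge-coloured bipartite setting. Set $L=k\log n/\log\log n$ and view the hypothesis as a proper edge-colouring $\phi$ of $K_{n,n}$ by $nL$ colours; since a perfect rainbow matching uses only $n$ colours, there is an $(L-1)n$-colour surplus, and my plan is to exploit this surplus to build a rigid ``completion gadget'' that upgrades the near-perfect rainbow matching produced by the nibble into an honest perfect one.

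The first step is an \emph{absorption setup}: before running any randomised process, reserve a small subgraph $H_{\mathrm{abs}}\subseteq K_{n,n}$ together with a disjoint colour reservoir $\mathcal{C}_R$, built so that for every balanced vertex subset $S\subseteq V(K_{n,n})\setminus V(H_{\mathrm{abs}})$ of size at most $2r$, where $r=O(\log n/\log\log n)$, the graph $H_{\mathrm{abs}}\cup S$ admits a rainbow perfect matching whose colours all lie in $\mathcal{C}_R$. Random constructions of such ``universal'' rainbow completers are available in the recent rainbow-matching literature (the Montgomery--Pokrovskiy--Sudakov and Keevash--Yepremyan works cited above), and the $(L-1)n$-colour surplus easily accommodates both $V(H_{\mathrm{abs}})$ and $\mathcal{C}_R$.

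Next, I would apply the paper's main technical pseudorandom-matching theorem (the one promised in the introduction as yielding a rainbow matching covering all but $O(\log n/\log\log n)$ vertices in suitable pseudorandom edge-coloured graphs) to the residual graph $K_{n,n}-V(H_{\mathrm{abs}})$ equipped with only the colours outside $\mathcal{C}_R$. Since the deletions affect each relevant degree and codegree by only a negligible amount, the restricted colouring remains properly edge-coloured and sufficiently pseudorandom to invoke the technical theorem, producing a rainbow matching $M_0$ that leaves an uncovered set $S$ of size at most $r$ on each side. Feeding $S$ into the absorber yields a rainbow matching $M_1$ inside $H_{\mathrm{abs}}\cup S$ using only $\mathcal{C}_R$-colours, and because $M_0$ uses none of those colours, $M_0\cup M_1$ is a rainbow perfect matching of $K_{n,n}$.

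The main obstacle is the absorber construction. The gadget $H_{\mathrm{abs}}$ together with its reservoir $\mathcal{C}_R$ must simultaneously (i) be small enough that its removal does not damage the pseudorandomness required by the main technical theorem, (ii) live on colours strictly separated from anything $M_0$ might use, and (iii) be flexible enough to absorb \emph{every} possible leftover $S$ of size up to $2r=O(\log n/\log\log n)$ using only reservoir colours. Matching the absorbable size of $H_{\mathrm{abs}}$ to the $O(\log n/\log\log n)$ error term delivered by the nibble is the delicate point — a weaker absorber would leave too many vertices for the main theorem to clean up, while a larger one would overspend the colour surplus — and is where the interplay between robust expansion of pseudorandom edge-colourings and the $nL$-colour surplus does the essential work.
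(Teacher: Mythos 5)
Your high-level plan—reserve a rainbow absorber plus colour reservoir, run the nibble-plus-expansion machine to get a near-perfect matching, then finish with the absorber—is a reasonable template, but it is not what the paper does, and more importantly it contains a gap precisely where the paper's real work lies. The existence of a ``universal'' rainbow completer $H_{\mathrm{abs}}$ together with a reservoir $\mathcal{C}_R$ that can absorb an \emph{arbitrary} balanced leftover set $S$ of size $O(\log n/\log\log n)$ is not an available black box. The papers you cite (Montgomery--Pokrovskiy--Sudakov, Keevash--Yepremyan) need $n^{2-\eps}$ colours exactly because their absorption/completion technology can only handle leftovers at polynomial scale; the abstract of this paper explicitly notes that all previous semi-random approaches left $\Omega(n^{\alpha})$ vertices uncovered. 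No known construction produces an absorber this rigid at the $\log n/\log\log n$ scale, and building one is the central difficulty. The paper sidesteps this entirely: its Theorem~\ref{Theorem_typical_technical} is not merely a ``covers all but $O(\log n/\log\log n)$'' result, it produces a genuinely \emph{perfect} rainbow matching once the typical subgraph $G\subseteq H$ carries $n+6d$ colours, because the expansion/switching machinery iterates alternating-path augmentations all the way down. The ``absorption'' is therefore implicit in the alternating-path switches, not a separate gadget.

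A second omission: with only $kn\log n/\log\log n$ colours, a substantial fraction of colours can be \emph{small} (appear few times, even once), which breaks the pseudorandomness hypotheses. The paper's proof handles this by invoking Theorem~\ref{thm:PMS} to guarantee that almost all colours are large, and then by a separate greedy/minimum-degree argument (Claim~\ref{claim:greedy} via Lemma~\ref{Lemma_small_matching_min_degree}) it extracts a rainbow matching $M_0$ of size $t+6d$ on small colours before any pseudorandom machinery is invoked. Your proposal nowhere addresses the small-colour problem, and without $M_0$ the residual graph you feed to the technical theorem will generally fail the typicality requirements. So there are two genuine missing pieces: a justification for the $\log n/\log\log n$-scale absorber (which does not exist off the shelf), and a treatment of the irregular small-colour part of the colouring.
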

\noindent
Although, as we already mentioned in the previous section, this theorem can be used to prove Theorem \ref{Theorem_Ryser_Rainbow_Version}, we deduce both of them from a more general result about matchings in properly edge-coloured ``typical" (i.e., both edges and colours have some pseudorandom properties) graphs which we obtain in Section 4.

The reduction of the task of finding large matchings in a Steiner triple system $S$ to a graph problem is slightly more subtle, and the details can be found in Section~6. The main idea is to randomly select a tripartition $(A,B,C)$ of $S$ and consider only the edges that respect this tripartition. The first two parts induce a properly  edge-coloured graph $G$ where we think of the colour of an edge $ab$ with $a\in A,b\in B$ to be $c\in C$ if $abc\in S$. Note that any rainbow matching in $G$ induces a matching of the same size in $S$. The graph $G$ turns out to be typical in this coloured setting we mentioned above, and we can also guarantee $|A|=|B|=|C|=n/3$. Thus from our general result (more precisely, Corollary~\ref{Corollary_typical_matching}) it follows that $G$ contains a rainbow matching  of size $n/3-O(\log{n}/\log{\log{n}})$, and therefore, $S$ has a matching of the same size.

\subsubsection*{R\"odl Nibble and expansion}
\label{subsec:nibble}
R\"odl introduced a method called ``R\"odl's nibble", which can be used to find matchings in a wide variety of settings. In particular it applies in the setting of Theorem~\ref{Theorem_Ryser_Rainbow_Version} to give a rainbow matching of size $n-O(n^{1-\alpha})$ (for some small constant $\alpha>0$). Our ideas very much build on this result. At a high level, our proof consists of starting with a matching produced by R\"odl's nibble and then modifying it to get a matching of size $n-O(\log n/\log\log n)$. Although our methods apply for all coloured pseudorandom graphs let us demonstrate its main ideas for the simplest case, $K_{n,n}$.

The basic idea of R\"odl's nibble is to construct a matching in several steps, each time taking a collection of random edges. To imitate this idea in our setting, given a properly edge-coloured $K_{n,n}$, we fix $q\in (0,1)$ and select every edge of $K_{n,n}$ with probability $q/n$. Then we delete all edges which share vertices or colours with other selected edges. This will certainly produce a rainbow matching. The matching produced like this is often called a ``bite''. How big will it be? Unfortunately not very big. The expected number of edges in the bite will be $qn(1-q/n)^{3(n-1)}$ which is roughly $qne^{-3q}$ for large $n$. So the maximum size of the matching would be $n/3e$ achieved by $q=1/3$. R\"odl's brilliant idea was to perform several small bites one after another, deleting the vertices/colours used in each bite from the rest of the graph. Although after the first bite, the remaining graph will no longer be complete, it still turns out to be possible to repeatedly bite until the remainder has size $<O(n^{1-\alpha})$. This is based on the phenomenon that edges/vertices not used on each bite have pseudorandomness properties. 

Our key new idea is to show that this matching has nice ``expansion'' properties. In fact, we only need to analyse these properties for the first bite. The structure of our proof is the following:
\begin{enumerate}
\item [(S1)] Obtain $M_0$ rainbow matching via the first bite and show it satisfies certain expansion properties.
\item [(S2)] Delete vertices and colours of $M_0$ from $K_{n,n}$. The remaining graph will still have pseudorandom properties with respect to both colours and vertices, therefore we can extend $M_0$ to  a larger rainbow matching $M$ of size $n-n^{1-\alpha}$. This step is done via using R\"odl's nibble as a black box on coloured pseudorandom graphs.
\item [(S3)] The expansion properties that $M_0$ had can be transferred to $M$ which will allow us to do switching-type arguments to increase $M$ as long as we have $\log n/\log\log n$ unused colours outside of $M$. We do this iteratively, starting from $M$ and obtaining a new matching of size  one bigger at every step.
\item [(S4)] After at most $O(n^{1-\alpha})$ times we get a matching with remainder at most $O(\log n/\log\log n)$. 
\end{enumerate}

The major part of this paper is devoted to establishing (S1) in Section~\ref{sec:expansion}. Next we discuss the notion of expansion we study.  Our approach is heavily inspired by the idea that a ``randomly chosen matching will satisfy pseudorandomness properties''. The pseudorandomness property that we use  is very different from the ones previously used in nibble-type proofs.  It can be summarised as ``the union of a random matching together with an arbitrary nearly regular graph $D$ will have strong expansion properties''. Here is a simplified version of what we prove:
\begin{lemma}Let $0<q\ll 1$,  $q^{-1}\ll d \leq \sqrt n$. Given  $K_{n,n}$  properly edge-coloured by $n$ colours, let $H$ be its subgraph formed by choosing every edge independently with probability $q/n$. Delete all edges of $H$ which share vertices or colours with other edges of $H$ and let $M_0$ be the resulting rainbow matching.  Then
with high probability
\begin{itemize}
\item[(E1)]  every collection $D$ of $d$ colours in $K_{n,n}$, and every set $S$ of $n/q^4d$ vertices 
there are at least $(1-q)n$ vertices that can be reached from $S$ by a $D$-$M_0$ alternating path of length three, i.e., a path whose first and last edge is in $D$ and the middle edge is in $M_0$.
\end{itemize}
\end{lemma}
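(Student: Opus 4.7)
The plan is to fix an arbitrary pair $(S,D)$, bound the probability that more than $qn$ $B$-vertices are unreachable for that pair, and then union-bound over all choices of $(S,D)$. By considering the two sides of $K_{n,n}$ separately I may assume $S\subseteq A$, and writing $N:=n/(q^4d)$ for $|S|$, I define $R\subseteq B$ to be the set of vertices reachable from $S$ by a single $D$-coloured edge; since each colour of $D$ is a perfect matching from $A$ to $B$ and each $B$-vertex has at most $d$ incident $D$-coloured edges, a double count gives $|R|\ge|S|=N$. For each $w\in B$ and $c\in D$ let $v_2(c,w)$ denote the unique $A$-vertex with $v_2(c,w)w$ of colour $c$; then $w$ is reached by a $D$-$M_0$-$D$ path iff some $v_2(c,w)$ is $M_0$-matched to a vertex of $R$, otherwise it is \emph{bad}. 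Let $X$ count the bad $B$-vertices.

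For the first moment I would use the standard nibble estimate $\Pr[e\in M_0]\ge(1-o(1))(q/n)e^{-3q}$ valid for any fixed edge $e$ (it is in $H$ with probability $q/n$ and has $3n-O(1)$ conflicting edges, each absent from $H$ independently). For fixed $w$ and $c$ the $|R|$ edges $\{v_2(c,w)r:r\in R\}$ share the $A$-endpoint $v_2(c,w)$ and so pairwise conflict, giving
\[
\Pr\bigl[v_2(c,w)\text{ is }M_0\text{-matched into }R\bigr]\;\ge\;c_0\, q|R|/n\;\ge\;c_0/(q^3 d).
\]
As $c$ ranges over $D$ the vertices $v_2(c,w)$ are pairwise distinct (the colouring is proper), so the failure events for different $c$ depend on disjoint vertex stars in $H$ up to a small colour-conflict correction that FKG absorbs, giving $\Pr[w\text{ bad}]\le\exp(-c_0/q^3)$ and $\mathbb{E}X\le n\exp(-c_0/q^3)\ll qn$.

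The crux of the argument is upgrading this first-moment bound to a tail estimate strong enough to union-bound over all $\binom{2n}{N}\binom{n}{d}$ pairs $(S,D)$. My plan is to enumerate candidate witness sets $W\subseteq B$ with $|W|=qn$ and control $\Pr[W\subseteq\text{bad}]$ directly: this event forces every vertex of $A^\ast:=\{v_2(c,w):(c,w)\in D\times W\}$ to be $M_0$-unmatched into $R$, with $|A^\ast|\ge|W|=qn$ because $v_2(c,\cdot)$ is injective. Greedy selection inside the bipartite witness graph $A^\ast\times R$, using that each edge has $O(n)$ conflict partners, produces a conflict-free sub-family $\mathcal F$ (no two edges sharing vertex or colour) of size $|\mathcal F|=\Omega(|A^\ast||R|/n)$; since the Bernoulli$(q/n)$ events $\{e\in H\}$ for $e\in\mathcal F$ are then mutually independent, a Janson-type argument on these yields $\Pr[W\subseteq\text{bad}]\le\exp(-c_1 q|\mathcal F|/n)$, and summing this over the $\binom{n}{qn}$ choices of $W$ and then over $(S,D)$ using $\binom{n}{qn}\le\exp(O(qn\log(1/q)))$ and $\binom{2n}{N}\binom{n}{d}\le\exp(O((N+d)\log n))$ should close the argument for $q,d$ in the stated range.

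The main obstacle I foresee is precisely this concentration step: in the worst case $|A^\ast|$ is only $qn$ and $|R|$ only $N$, and a single greedy family $\mathcal F$ produces an exponent that only barely dominates the combined entropy cost of choosing $(S,D,W)$. I expect one must instead exploit that each $w\in W$ has $d$ near-independent chances to be reached via the $d$ different colours $c\in D$, by partitioning the witness graph into \emph{many} mutually independent conflict-free sub-families rather than one, so that the $d$-fold redundancy enters the exponent. The calibration $|S|=n/(q^4d)$ is exactly what makes $|R|\ge N$ large enough to enable such a multi-family partition and beat the union-bound cost; any looser choice of $|S|$ would shrink $|R|$ and collapse the calculation.
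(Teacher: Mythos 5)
Your first-moment computation is in the right spirit --- the paper also fixes $(S,D)$ and bounds the failure probability of a single target vertex. But the overall plan you build on it, a direct union bound over all $(S,D)$ together with witness sets $W$, cannot work. Here is the obstruction. Take the cyclic-group colouring of $K_{n,n}$, let $D$ be an interval of $d$ colours and $T$ an interval of $2N$ vertices on one side, where $N=n/(q^4d)$. Then $N_D(T)$ is an interval of size $|T|+|D|-1\le 3N$ (note $d\le\sqrt n\le N$). The single event ``no edge of $H$ meets $N_D(T)$'' has probability $(1-q/n)^{n|N_D(T)|}\ge e^{-3qN}$, and it forces \emph{every} $N$-subset $S\subseteq T$ to fail simultaneously (no vertex is reached at all). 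Hence
\begin{equation*}
\sum_{S\subseteq T,\;|S|=N}\Pr\bigl[(S,D)\text{ fails}\bigr]\;\ge\;\binom{2N}{N}\,e^{-3qN}\;\ge\; e^{(\ln 2 - 3q)N}\;\longrightarrow\;\infty,
\end{equation*}
so the union bound over $S$ diverges no matter how sharply the single-pair tail is estimated. (Your own bound $\exp(-c_1q|\mathcal F|/n)=\exp(-\Theta(1/(q^2d)))$ is, as written, a constant bounded away from $0$, so it is off by much more than ``barely''; but the divergence above shows that even a perfect tail bound would not save the plan.)

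The idea you are missing, and the key new ingredient in the paper, is a \emph{container-type reduction on $S$}: the paper never union-bounds over $S$. Using that $D$ is nearly $d$-regular, Lemma~\ref{Lemma_regular_bipartite_containers} extracts from every $S$ of the relevant size a representative subset $S'\subseteq S$ of size only $\Theta(n/(q^3d^2))$, a factor $\approx d$ smaller, while keeping $|N_D(S')|=\Omega(|S|)$. The expansion statement $|N^3_{D,M_0}(S')|\ge(1-O(q))n$ is then proved for each such small $S'$ with failure probability $e^{-\Omega(q^6n/d)}$, and the union bound is only over the $\binom{n}{\Theta(n/(q^3d^2))}$ choices of $S'$, which now closes (Lemma~\ref{Lemma_expansion_3rd_neighbourhood}). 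Since $N^3_{D,M_0}(S')\subseteq N^3_{D,M_0}(S)$, the statement for all $S$ follows immediately. This is exactly why the paper's Definition~\ref{def:expander} asks for a \emph{subset} $S'$ that expands, rather than for $S$ itself.

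Two further cautions. First, your appeals to FKG and to independence of the events $\{e\in H\}$ over a conflict-free family both quietly treat membership in $M_0$ as an independent Bernoulli event, which it is not: whether an edge survives the pruning depends on all of its $\Theta(n)$ conflicting edges. The paper decouples this cleanly by first proving expansion for the \emph{unfiltered} random graph $H$, where the edge indicators really are independent and Azuma over the coordinates of $E(S,Y)$ applies (Lemma~\ref{Lemma_Gnp_second_neighbourhood_expansion}), and separately showing via Talagrand that the pruning $H\to M_0$ deletes only $O(q^2|S|)$ edges incident to $S$ (Lemma~\ref{Lemma_vertex_probabilities}), so $N^2_{H,D}(S)$ and $N^2_{M_0,D}(S)$ differ negligibly. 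Second, the lower bound $|R|\ge|S|$ that your estimate rests on is tight precisely in the hard (additively structured) instances above, which is the source of the divergence; any argument that uses only $|R|\ge|S|$ must confront it.
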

Notice that (E1) only provides expansion  for large sets $S$ but for our purposes we need it  to hold for all sets. After we  extend the matching $M_0$ to a larger rainbow matching $M$  of size roughly $n-n^{1-\alpha}$ as desrcibed in (S2) we are able to iterate (E1) if we restrict to larger collections of colours and longer paths. In particular, we obtain the following refinement of (E1) with respect to $M$.

\begin{itemize}
    \item [(E2)] \textit{For $d=\log{n}/\log{\log{n}}$ and any collection $D$ of $d$ colours there exists a set  of vertices $V_0$  of size at most $qn$ such that the following holds. Every vertex not in $V_0$ can reach  all but $qn$ vertices  via $D-M$ alternating  rainbow paths of length $O(\log{n}/\log{\log{n}})$. }
\end{itemize}

Note that (E2)  also implies that between any two vertices of $K_{n,n}$ lying in different sides of the bipartition there is a $D-M$ alternating  rainbow path of length $O(\log{n}/\log{\log{n}})$. This  property is enough to  perform the modifications described in (S3). We find an alternating rainbow path to extend the matching $M$ by one edge at a time much like in standard proofs of say Hall's Matching Theorem. In the applications of (E2) we let $D$ to be the set of unused colours on $M$. The condition $|D|=\log n/\log\log n$ allows us to do iterations and also tells us when the process must stop and the enlargement of the matching is no longer possible. The reason why we need this many unused colours is because the length of the alternating paths, used to perform the switching-type arguments, can be as large as $\log n/\log\log n$, and we need to guarantee that these paths are rainbow. So we can repeat (S3) until the number of unused colours on $M$ is $O(\log{n}/\log{\log{n}})$.

\section{Preliminaries}
We will use asymptotic ``$\ll$'' notation to state our intermediate lemmas. When we write ``$\delta\ll  \eps$'' in the statement of a result, it means ``for all $\eps>0$ and sufficiently small $\delta>0$, the following statement is true''. In particular ``$n^{-1}\ll \eps$'' means ``for all $\eps>0$ and sufficiently large $n$, the following is true''. When we chain several inequalities like this, the quantity on the left is small relative to all constants on the right. For example ``$n^{-1}\ll \delta \ll \eps$'' means ``for all $\eps>0$, there is a $\delta_0$ such that for positive $\delta<\delta_0$ and sufficiently large $n$, the following is true''. 
Sometimes we will abuse this notation and write ``$n\gg \eps^{-1}$'' to mean ``$n^{-1}\ll \eps$''.\Alexey{[Alexey: is this last sentence okay?]}

For any positive reals $a,b\in \mathbb R$, we use ``$x=a\pm b$'' to mean ``$a-b\leq x\leq  a+b$''. We also use the same notation with more than one instance of ``$\pm$''. We will write expressions of the form ``$f=g$'' where $f$ and $g$ are functions involving, one or more instances of ``$\pm$''.
To interpret such an expression, first define $\max_{\pm} f$ to be the maximum value of $f$ taken over all possible assignments of $+/-$  to each ``$\pm$'' symbol. Similarly define  $\min_{\pm} f$. Then we say that ``$f=g$'' if $\max_{\pm}f\leq \max_{\pm}g$ and $\min_{\pm}f\geq \min_{\pm}g$ are both true.

For a graph $G$, the set of edges of $G$ is denoted by $E(G)$ and the set of vertices of $G$ is denoted by $V(G)$. The set of neighbours of $v$ is denoted by $N_G(v)$, and  $d_G(v)=|N_G(v)|$.
For a coloured graph $G$ and a colour $c$, denote by $E_G(c)$ the set of edges of colour $c$ in $G$, and denote $V_G(c)$ for the set of vertices touching colour $c$ edges. In all of these, we omit the ``$G$'' subscript when the graph $G$ is clear from context. For a properly edge-coloured graph $G$, $C\subseteq C(G)$  and $v\in V(G)$ we denote by $N_C(v)$ the set of vertices $w$ such that $vw\in E(G)$ and $c(vw)\in C$.
For a graph $G$, a set of vertices $A$, let $G[A]$ denote the induced subgraph of $G$ on $A$. For a  coloured graph $G$, a set of colours $C\subseteq C(G)$,  we let $G[C]$ to be the subgraph of $G$ induced by edges of colours in $C$. 

Let  $G,H$ be   graphs on the same vertex set $V$.  We say that a path $x_1x_2\dots x_t$ is $G$-$H$ alternating if, for for odd $i$, $x_ix_{i+1}\in E(G)$ and for even $i$, $x_ix_{i+1}\in E(H)$ (or in other words, the first edge is in $G$, and thereafter the edges alternate between $G$ and $H$).
For a set $S\subseteq V(G\cup H)$, we use $N^t_{G,H}(S)$ to denote the set of vertices $v$ to which there is a  $G$-$H$ alternating path of length $t$ from some $s\in S$.

\subsection{Probabilistic tools}
Here we gather basic probabilistic tools that we use. We use the Chernoff bounds. Most of these can be found in textbooks on the probabilistic method such as~\cite{molloy2013graph}.
\begin{lemma}[Chernoff bounds, \cite{molloy2013graph}]Given a binomially distributed variable $X\in Bin(n, p)$  for all  $0<a\leq 3/2$ we have
 $$\Prob{[|X-\E[X]|\geq a \E[X]]}\leq 2e^{-\frac{a^2}{3}\E[X]}$$.
\end{lemma}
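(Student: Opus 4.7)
The plan is to use the standard exponential moment (Chernoff) method. Writing $X = \sum_{i=1}^n X_i$ as a sum of independent Bernoulli$(p)$ variables with $\mu = \E[X] = np$, I would first apply Markov's inequality to $e^{tX}$: for any $t>0$,
\[
\Prob[X \geq (1+a)\mu] \;\leq\; e^{-t(1+a)\mu}\,\E[e^{tX}] \;=\; e^{-t(1+a)\mu}\prod_{i=1}^n \E[e^{tX_i}].
\]
Using $\E[e^{tX_i}] = 1-p+pe^t \leq \exp\bigl(p(e^t-1)\bigr)$, the right-hand side is at most $\exp\bigl(\mu(e^t-1) - t(1+a)\mu\bigr)$.

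Next I would optimize the free parameter $t>0$ by setting $t = \ln(1+a)$, which yields
\[
\Prob[X \geq (1+a)\mu] \;\leq\; \exp\bigl(-\mu\,[(1+a)\ln(1+a) - a]\bigr).
\]
The key analytic step is then the elementary convex inequality $(1+a)\ln(1+a) - a \geq a^2/3$ valid for $0 < a \leq 3/2$; this can be verified by comparing derivatives, since both sides and their first derivatives vanish at $a=0$ while the second derivative of the LHS dominates that of the RHS throughout the stated range. For the lower tail $\Prob[X \leq (1-a)\mu]$, the analogous computation with $-tX$ in place of $tX$ and $t>0$ gives $\Prob[X \leq (1-a)\mu] \leq \exp(-\mu a^2/2)$, which is strictly stronger than the target exponent $-\mu a^2/3$. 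A union bound over the two tails then delivers the stated two-sided inequality with the factor of $2$.

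The main (and essentially only) obstacle is the convex-analytic inequality $(1+a)\ln(1+a) - a \geq a^2/3$, which is precisely what forces the hypothesis $a \leq 3/2$; for larger deviations one has to switch to a different normalisation of the Chernoff bound (with $a^2/(2+a)$ in the exponent, say). Since the lemma is attributed to a textbook source, there is nothing novel to recover, and I would simply cite the standard derivation in Molloy--Reed rather than reproduce it in full.
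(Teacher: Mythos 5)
Your approach is the standard exponential-moment derivation, and since the paper simply cites this lemma from Molloy--Reed without reproducing a proof, there is nothing to compare against; on its own terms the plan is sound, and the bookkeeping (MGF bound, choice $t=\ln(1+a)$, the stronger $a^2/2$ exponent for the lower tail, and the union bound) is all correct.

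However, the one place you treat as "the main obstacle" is justified by an argument that does not actually work on the full range. You claim the inequality $(1+a)\ln(1+a)-a\geq a^2/3$ follows because "the second derivative of the LHS dominates that of the RHS throughout the stated range." Writing $g(a)=(1+a)\ln(1+a)-a$ and $h(a)=a^2/3$, you have $g''(a)=1/(1+a)$ and $h''(a)=2/3$, and $g''(a)\geq h''(a)$ only for $a\leq 1/2$; for $a\in(1/2,3/2]$ the inequality between second derivatives reverses. So the "dominance throughout" claim is false, and a two-step integration from $a=0$ would not establish the inequality past $a=1/2$. The inequality itself \emph{is} true on $(0,3/2]$ (indeed it holds up to roughly $a\approx 1.8$), but proving it needs a finer argument: for instance, note that $f:=g-h$ satisfies $f(0)=f'(0)=0$, $f''$ is strictly decreasing and changes sign exactly once (at $a=1/2$), hence $f'$ is unimodal and changes sign at most once on $(0,\infty)$, hence $f$ is increasing-then-decreasing; combining $f(0)=0$ with a direct numerical check that $f(3/2)=\tfrac{5}{2}\ln\tfrac{5}{2}-\tfrac{3}{2}-\tfrac{3}{4}>0$ then gives $f\geq 0$ on the whole interval. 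With that repair your proof is complete.
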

Given a product space $\Omega=\prod_{i=1}^n\Omega_i$ and a random variable $X:\Omega\to \mathbb{R}$ 
we  make the following definitions.
\begin{itemize}
\item  Suppose that there is a constant $c$ such that changing $\omega\in \Omega$ in any one coordinate changes $X(\omega)$ by at most $c$. Then we say that $X$ is $c$-Lipschitz.
\item Suppose that for any $s\in \mathbb{N}$ and $\omega\in \Omega$ with $X(\omega)\geq s$ there is a set $I\subseteq \{1, \dots, n\}$ with $|I|\leq rs$ such that every $\omega'$ which agrees with $\omega$ on coordinates in $I$ also has $X(\omega')\geq s$. Then we say that $X$ is $r$-certifiable.
\end{itemize}

We'll use the following two versions of Azuma's inequality.
\begin{lemma}[Azuma's Inequality, \cite{molloy2013graph}]\label{Lemma_Azuma}
For a product space $\Omega=\prod_{i=1}^n\Omega_i$ and a $c$-Lipschitz random variable $X:\Omega\to \mathbb{R}$, we have
$$\Prob\left(|X-\E(X)|>t \right)\leq 2e^{\frac{-t^2}{nc^2}}$$
\end{lemma}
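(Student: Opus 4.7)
The plan is to apply the standard Doob martingale argument. I would set $Z_i = \E[X \mid \omega_1, \dots, \omega_i]$ for $0 \leq i \leq n$, so that $Z_0 = \E[X]$, $Z_n = X$, and $(Z_i)$ is a martingale with respect to the filtration generated by the coordinates of the product space. The telescoping identity $X - \E[X] = \sum_{i=1}^n (Z_i - Z_{i-1})$ reduces the statement to a concentration bound on a sum of martingale differences, which is precisely the setting of the classical Azuma-Hoeffding inequality.

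The technical heart of the proof is to verify that $|Z_i - Z_{i-1}| \leq c$ almost surely. After conditioning on $(\omega_1, \dots, \omega_{i-1})$, both $Z_i$ and $Z_{i-1}$ are averages of $X$ over the remaining coordinates $(\omega_{i+1}, \dots, \omega_n)$ with respect to the product measure, but $Z_i$ treats the $i$-th coordinate as fixed while $Z_{i-1}$ also averages over it. Comparing the value of $Z_i$ at two choices $\omega_i, \omega_i'$ with the earlier coordinates held fixed, and coupling the randomness over $(\omega_{i+1}, \dots, \omega_n)$, the $c$-Lipschitz property of $X$ yields a pointwise difference of at most $c$; averaging over $\omega_i$ then gives the claimed bound on $|Z_i - Z_{i-1}|$.

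With bounded differences in hand, the rest is routine. Hoeffding's lemma, applied conditionally on $(\omega_1, \dots, \omega_{i-1})$ to the mean-zero and $[-c, c]$-valued variable $Z_i - Z_{i-1}$, gives $\E\bigl[e^{\lambda(Z_i - Z_{i-1})} \mid \omega_1, \dots, \omega_{i-1}\bigr] \leq e^{\lambda^2 c^2 / 2}$. Iterating through the tower property yields $\E[e^{\lambda(X - \E[X])}] \leq e^{n \lambda^2 c^2 / 2}$, and a Chernoff step $\Prob(X - \E[X] > t) \leq e^{-\lambda t} \E[e^{\lambda(X - \E[X])}]$, optimised at $\lambda = t/(nc^2)$, produces the one-sided tail $\Prob(X - \E[X] > t) \leq e^{-t^2/(2nc^2)}$. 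A symmetric argument for $-X$ plus a union bound gives the two-sided inequality, which is in fact slightly stronger than the stated version with exponent $-t^2/(nc^2)$, so there is room to spare. The only genuine obstacle is the coupling step used to bound the martingale differences; once that is accepted, the conclusion can simply be cited from Molloy-Reed as the paper does.
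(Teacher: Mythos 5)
Your proof is correct and is the standard Doob martingale argument; the paper does not prove this lemma itself but simply cites Molloy--Reed, and the argument you outline (bounded martingale differences via coupling on the product space, Hoeffding's lemma, Chernoff optimisation, union bound) is precisely the one found there. You also correctly observe that the argument yields the sharper exponent $-t^2/(2nc^2)$, so the stated bound follows with room to spare.
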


\begin{lemma}[Azuma's Inequality for 0/1 product spaces, \cite{alon1997nearly, kim2002asymmetry}]\label{Lemma_Azuma_variance}
Let $\Omega=\{0,1\}^n$ with the $i$th coordinate of an element of $\Omega$ equal to 1 with probability $p_i$. Let $X$ be a $c$-Lipschitz random variable on $\Omega$. Set $\sigma^2=c^2\sum_{i=1}^n p_i(1-p_i)$. For all  $t\leq 2\sigma/c$, we have
$$\Prob\left(|X-\E(X)|>t\sigma \right)\leq 2e^{\frac{-t^2}{4}}$$
\end{lemma}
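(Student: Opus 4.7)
The plan is to follow the Doob-martingale moment-generating-function route, refined so that the per-step bound uses the \emph{variance} of each martingale increment rather than just its range; this is the refinement that lets the bound beat ordinary Azuma when the $p_i$ are small. Writing $Z_i := \E[X \mid \omega_1, \ldots, \omega_i]$ for the Doob martingale of $X$ (so $Z_0 = \E[X]$ and $Z_n = X$), I would establish two properties of each martingale difference $D_i := Z_i - Z_{i-1}$: (i) $|D_i| \leq c$ almost surely, and (ii) $\E[D_i^2 \mid \mathcal{F}_{i-1}] \leq c^2 p_i(1-p_i)$. Both follow by conditioning on $\omega_1, \ldots, \omega_{i-1}$ and letting $a,b$ denote the conditional expectations of $X$ obtained by setting $\omega_i = 0$ or $\omega_i = 1$ respectively. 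Since the $c$-Lipschitz condition lets us couple the two conditional distributions of $(\omega_{i+1}, \ldots, \omega_n)$, averaging gives $|b - a| \leq c$; and conditionally $D_i$ is a two-point centered variable taking values $(1-p_i)(b-a)$ and $-p_i(b-a)$ with probabilities $p_i$ and $1-p_i$, whose variance is exactly $p_i(1-p_i)(b-a)^2 \leq c^2 p_i(1-p_i)$.

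The key analytic input is the elementary inequality $e^y \leq 1 + y + y^2$, valid for $|y| \leq 1$. Applied with $y = \lambda D_i$, and provided $|\lambda| \leq 1/c$, this yields
\[
\E[e^{\lambda D_i} \mid \mathcal{F}_{i-1}] \leq 1 + \lambda^2 \E[D_i^2 \mid \mathcal{F}_{i-1}] \leq 1 + \lambda^2 c^2 p_i(1-p_i) \leq \exp\bigl(\lambda^2 c^2 p_i(1-p_i)\bigr).
\]
Telescoping through the tower of conditional expectations then gives $\E[e^{\lambda(X - \E[X])}] \leq e^{\lambda^2 \sigma^2}$.

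The Chernoff--Markov step finishes the proof: for any $\lambda \in (0, 1/c]$,
\[
\Prob(X - \E[X] > t\sigma) \leq e^{-\lambda t \sigma}\,\E[e^{\lambda(X - \E[X])}] \leq \exp(-\lambda t \sigma + \lambda^2 \sigma^2).
\]
Optimising in $\lambda$ gives $\lambda = t/(2\sigma)$, which satisfies the constraint $\lambda \leq 1/c$ exactly when $t \leq 2\sigma/c$, precisely the hypothesis of the lemma. The resulting one-sided bound is $e^{-t^2/4}$; applying the same argument to $-X$ and taking a union bound gives the two-sided $2e^{-t^2/4}$.

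The one place I would be careful is the interplay between the inequality $e^y \leq 1 + y + y^2$ (which needs $|y| \leq 1$) and the later optimisation in $\lambda$: this is exactly what forces the range restriction $t \leq 2\sigma/c$, and losing track of it would either weaken the constant in the exponent or produce a claim that is simply false for large $t$. Everything else is routine martingale bookkeeping; the only substantive step is the variance computation in (ii), which is what lets $\sigma^2 = c^2 \sum p_i(1-p_i)$ replace the crude bound $nc^2$ that ordinary Azuma would use.
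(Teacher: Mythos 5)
The paper states this lemma without proof, citing \cite{alon1997nearly, kim2002asymmetry}; there is no in-paper argument to compare against, so the relevant question is simply whether your proof is correct. It is, and it is the standard variance-refined martingale argument used in those sources: the Doob decomposition with increments $D_i$ satisfying $|D_i|\leq c$ and $\E[D_i^2\mid\mathcal{F}_{i-1}]\leq c^2p_i(1-p_i)$ (your two-point computation of the conditional law of $D_i$ is exactly right), the inequality $e^y\leq 1+y+y^2$ for $|y|\leq 1$ applied with $y=\lambda D_i$ (valid precisely because $|\lambda|\leq 1/c$), telescoping to $\E[e^{\lambda(X-\E X)}]\leq e^{\lambda^2\sigma^2}$, and the optimisation $\lambda=t/(2\sigma)$, which is what generates the hypothesis $t\leq 2\sigma/c$. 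You correctly flag that the constraint $\lambda\leq 1/c$ and the range restriction on $t$ must be kept in sync; that is indeed the one place a careless version of this proof goes wrong.
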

We'll also use the following version of Talagrand's inequality.
\begin{lemma}[Talagrand Inequality, \cite{molloy2013graph}]\label{Lemma_Talagrand}
For a product space $\Omega=\prod_{i=1}^n\Omega_i$ and a $c$-Lipschitz, $r$-certifiable random variable $X:\Omega\to \mathbb{R}$, we have
$$\Prob\left(|X-\E(X)|>t+60c\sqrt{r\E(X)} \right)\leq 4e^{\frac{-t^2}{8c^2r\E(X)}}$$
\end{lemma}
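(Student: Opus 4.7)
My plan is to derive the stated bound from Talagrand's convex distance inequality, which I would take as a black box: for any product probability space $\Omega=\prod_i\Omega_i$ and event $A\subseteq\Omega$ one has $\Prob[A]\cdot\E\bigl[\exp(d_T(A,\cdot)^2/4)\bigr]\leq 1$, where $d_T(A,x)=\max_{\alpha\geq 0,\,\|\alpha\|_2=1}\min_{y\in A}\sum_{i:x_i\neq y_i}\alpha_i$ is the Talagrand convex distance.  The strategy is then to use the Lipschitz and certifiability hypotheses to bound $d_T$ from below when $A$ is a sublevel set of $X$ and $x$ lies far above it, plug this bound into Talagrand's inequality, and finally translate the resulting concentration-around-the-median statement into the stated concentration around the mean.

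First I would establish the geometric input.  Fix $b\in\mathbb{R}$ and set $A=\{y:X(y)\leq b\}$.  Suppose $X(x)=s>b$.  By $r$-certifiability there exists $I\subseteq[n]$ with $|I|\leq rs$ such that any $x'$ agreeing with $x$ on $I$ satisfies $X(x')\geq s$.  Given any $y\in A$, let $x'$ agree with $x$ on $I$ and with $y$ off $I$; then $X(x')\geq s$, but going from $x'$ to $y$ changes only coordinates in $I$, so by the $c$-Lipschitz property $c\cdot |\{i\in I:x_i\neq y_i\}|\geq X(x')-X(y)\geq s-b$.  Taking the test vector $\alpha=\mathbf{1}_I/\sqrt{|I|}$ gives $d_T(A,x)\geq (s-b)/(c\sqrt{rs})$.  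Plugging this lower bound into Talagrand's convex distance inequality, restricted to the event $\{X\geq s\}$, yields
\[
\Prob[X\leq b]\cdot \Prob[X\geq s]\cdot \exp\!\left(\frac{(s-b)^2}{4c^2rs}\right)\leq 1.
\]

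Next I would specialize: setting $b$ to the median $m$ of $X$, so that $\Prob[X\leq m]\geq 1/2$, and writing $s=m+u$, rearranging gives the one-sided tail $\Prob[X\geq m+u]\leq 2\exp(-u^2/(4c^2r(m+u)))$.  A symmetric argument with $A=\{y:X(y)\geq m\}$ and a point $x$ with $X(x)=m-u$ (using the same Lipschitz/certifiability step on $\{X\geq m\}$ viewed as a superlevel set, with witness size $\leq rm$) gives the matching lower tail $\Prob[X\leq m-u]\leq 2\exp(-u^2/(4c^2rm))$.  Integrating these tails bounds $|\E(X)-m|=O(c\sqrt{r\,\E(X)})$, which is absorbed into the additive $60c\sqrt{r\,\E(X)}$ slack in the statement; simultaneously, on the complement of the stated deviation event one has $s\leq 2\E(X)$, allowing the $s$ in the exponent's denominator to be replaced by $\E(X)$ at the cost of a factor of $2$ (accounting for the $8c^2r\E(X)$ in the final bound).

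The main obstacle I foresee is purely the bookkeeping in this last step: threading absolute constants through the median-to-mean conversion and through replacing the $s$-dependent denominator by $\E(X)$ is exactly what forces the slightly unusual additive correction $60c\sqrt{r\,\E(X)}$ (rather than a purely multiplicative constant) in the statement, and requires some care to verify that the combined slack truly suffices.  Talagrand's convex distance inequality itself I would cite rather than reprove, since its proof by induction on $n$ via a Cauchy–Schwarz step is delicate but orthogonal to the combinatorial content of the rest of the paper.
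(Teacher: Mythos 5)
The paper does not prove this lemma; it is imported verbatim from Molloy and Reed \cite{molloy2013graph}, so there is no ``paper's own proof'' to compare against. Your sketch follows the standard textbook derivation (convex distance inequality $\rightarrow$ concentration around the median $\rightarrow$ concentration around the mean), which is indeed how Molloy and Reed obtain this form, so the high-level route is the right one and the key geometric step---bounding $d_T(A,x)\geq (s-b)/(c\sqrt{rs})$ via certifiability plus Lipschitzness, using the witness set $I$ as the support of the test vector $\alpha$---is correct.

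Two points deserve a closer look. First, in your lower-tail paragraph you set $A=\{y:X(y)\geq m\}$ and take $x$ with $X(x)=m-u$, but that configuration does not feed into Talagrand's inequality correctly: certifiability produces a witness set at the point where $X$ is \emph{large}, and the test vector $\alpha$ in $d_T(A,x)$ must be chosen as a function of $x$ alone, not of $y\in A$. The working arrangement is the mirror image: take $A'=\{X\leq m-u\}$ (so $\Prob[A']$ is the quantity to bound), take $x$ with $X(x)\geq m$, build the certificate $I$ at $x$ with $|I|\leq rm$, and set $\alpha=\mathbf{1}_I/\sqrt{|I|}$; then for every $y\in A'$ the Lipschitz step forces at least $u/c$ disagreements inside $I$, giving $d_T(A',x)\geq u/(c\sqrt{rm})$, and $\E[\exp(d_T(A',\cdot)^2/4)]\geq \tfrac12\exp(u^2/(4c^2rm))$ yields the tail. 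Your ``witness size $\leq rm$'' remark suggests you had this in mind, but as written the roles of $A$ and $x$ are swapped. Second, the passage from the $s$-dependent denominator $4c^2r s=4c^2r(m+u)$ to the fixed denominator $8c^2r\E(X)$ is not merely a matter of observing that $s\leq 2\E(X)$ on the complement of the deviation event: the tail bound must hold for \emph{every} $s$, and for $s\gg\E(X)$ the inequality $s\leq 2\E(X)$ fails. One must either restrict $t$ to a range such as $t\leq \E(X)$ and argue that the stated inequality is trivially true (probability $\leq 4e^{-\E(X)/8c^2r}$ swamps everything) outside that range, or split the upper tail into $u\leq m$ and $u>m$ and handle the second regime with the linear-in-$u$ decay $\exp(-u/(8c^2r))$. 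This bookkeeping is exactly where the additive $60c\sqrt{r\E(X)}$ slack is consumed; your sketch flags the issue but glides past the part that actually requires an argument.
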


We call a  bipartite graph $G$ with parts $X,Y$ is $(\eps,p,n)$-\emph{regular} if

\begin{itemize}\item [(P1)] $|X|=|Y|=n(1\pm n^{-\eps})$,
\item [(P2)]  $d(v)=pn(1\pm n^{-\eps})$
\end{itemize}

Furthermore, $G$ is $(\eps,p,n)$-\emph{typical} if 
\begin{itemize}\item [(P3)] for every $u,v\in X$ or $u,v\in Y$ we have
 $|N(u)\cap N(v)|=p^2n(1\pm n^{-\eps})$. 
 \end{itemize}
A bipartite graph $G$ with bipartition $(X,Y)$ and colour set $C$ is called  \emph{coloured $(\eps,p,n)$-regular}/ \emph{coloured $(\eps,p,n)$-typical} if it is properly edge-coloured and the following hold:
\begin{itemize}\item [(P4)] $G$ is (uncoloured) $(\eps,p,n)$-regular/$(\eps,p,n)$-typical.
\item [(P5)] Define  $G_{X,C}$ to be the bipartite graph with  vertex bipartition $(X,C)$ where  $xc$ is an edge for $x\in X, c\in C$ if there exists some $y\in Y$ such that $xy\in E(G)$ and $c(xy)=c$. Define $G_{Y,C}$  analogously. We require  both $G_{X,C}$ and  $G_{Y,C}$ to be $(\eps, p,n)$-regular/$(\eps, p,n)$-typical.
\end{itemize}
Note that a coloured $(\eps,p,n)$-regular graph $G$ is \emph{coloured} $(\eps,p,n)$-regular if additionally $|C(G)|=(1\pm n^{-\eps})n$ and every colour $c\in C(G)$ has  $|E_G(c)|=(1\pm n^{-\eps})pn$. 
Similarly, a  properly edge-coloured $(\eps,p,n)$-typical graph is \emph{coloured} $(\eps,p,n)$-typical if these happen and additionally every pair of colours $c,c'$ have $|V_G(c)\cap V_G(c')\cap X|=(1\pm n^{-\eps})p^2n$ and $|V_G(c)\cap V_G(c')\cap Y|=(1\pm n^{-\eps})p^2n$. 
 
Frankl and R\"odl \cite{frankl1985near} (also Pippenger, unpublished) showed that every $n$-vertex hypergraph with $(1\pm \epsilon)pn$ degrees and codegrees at most one has a matching of order $(1-\gamma)n$. 
A corollary of this is that every coloured $(\gamma, p, n)$-regular graph has a rainbow matching of order $(1-\gamma)n$ (to see this, associate a hypergraph with the  $(\gamma, \delta, n)$-regular graph as explained in the introduction and apply their theorem). 
We'll need the following standard version (which appeared in the literature before) of this result where the error term $\gamma n$ is polynomially related with $n$.  

\begin{lemma}\label{Lemma_variant_of_MPS_nearly_perfect_matching}
Let $n^{-1}\ll \gamma\ll\eps$ and $n^{-1}\ll p\leq 1$.
Every coloured $(\eps, p, n)$-regular bipartite graph  $G$ has  a  rainbow matching of size $n-n^{1-\gamma}$.
\end{lemma}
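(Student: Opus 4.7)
The plan is to recast the problem as finding a near-perfect matching in an auxiliary $3$-uniform hypergraph and then run a quantitative R\"odl nibble whose leftover is polynomial in $n$. Define $\mathcal H$ on vertex set $X\cup Y\cup C(G)$, placing a hyperedge $\{x,y,c\}$ for each $xy\in E(G)$ of colour $c$. The coloured $(\eps,p,n)$-regularity conditions (P1)--(P5), together with properness of the colouring, ensure that $\mathcal H$ is $3$-partite with each part of size $(1\pm n^{-\eps})n$, every vertex has degree $(1\pm n^{-\eps})pn$, and the codegree of every pair of vertices is at most one. A matching of size $t$ in $\mathcal H$ corresponds to a rainbow matching of size $t$ in $G$, so it suffices to find a matching of size $n-n^{1-\gamma}$ in $\mathcal H$.

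To do this I would run $T=\Theta(\gamma q^{-1}\log n)$ iterative bites of the semi-random method for a small fixed $q>0$. At step $t$, starting from a near-regular remainder hypergraph $\mathcal H_t$ of degree $D_t$, select each edge independently with probability $q/D_t$, delete any selected edge that shares a vertex with another selected edge, let the surviving edges form a matching $M_t$, and delete from $\mathcal H_t$ all vertices covered by $M_t$ to form $\mathcal H_{t+1}$. A direct expectation calculation (using the codegree-one property so that conflicts for a given edge are rare) shows that in one step each vertex's degree contracts by the expected factor $(1-q)^2$ and the fraction of uncovered vertices in each part shrinks by the expected factor $(1-q)$. The concentration of these quantities is the technical heart of the argument: for a fixed vertex $v$, its degree in $\mathcal H_{t+1}$ is a function on the product space of edge-selection indicators that is $O(1)$-Lipschitz and $O(1)$-certifiable, so Talagrand's inequality (Lemma~\ref{Lemma_Talagrand}) pins it around $(1-q)^2 D_t$ with relative error $\tilde O(D_t^{-1/2})$, and analogous arguments handle the part sizes and the total number of covered vertices. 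A union bound over the polynomially many quantities tracked lets us maintain near-regularity throughout.

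After $T$ iterations the cumulative multiplicative error is dominated by the term at the final step, of order $D_T^{-1/2}=n^{-\Omega(\gamma)}$, while the uncovered set on each side has size at most $(1+o(1))(1-q)^T n\leq n^{1-\gamma}$, so the union of $M_1,\dots,M_T$ is the required rainbow matching. The main obstacle I expect is the error bookkeeping: each invocation of Talagrand's inequality loses a constant factor in the exponent of the relative error, so one must verify that the hypothesis $\gamma\ll\eps$ leaves enough slack for near-regularity to survive all $T$ rounds; this is a standard but delicate propagation-of-errors computation, and it is precisely the quantitative strengthening over the qualitative Pippenger--Frankl--R\"odl theorem that produces the polynomial leftover.
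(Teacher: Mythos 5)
Your sketch is a sound outline of a nibble proof, but the paper itself does not prove this lemma from scratch: it observes in one line that $G$ is balanced bipartite with part sizes $(1\pm n^{-\eps})n$, degrees $(1\pm n^{-\eps})pn$, and each colour used at most $(1+n^{-\eps})pn$ times, and then notes that the statement is a strictly weaker consequence of Lemma~4.6 of Montgomery--Pokrovskiy--Sudakov \cite{montgomery2019decompositions} (applied with $\gamma=n^{-\eps}$, $\delta=p$, $p=n^{-\gamma}$, $\ell=1$). That cited lemma is itself a quantitative semi-random/nibble result, so the underlying technology is the same as in your argument; the difference is purely one of self-containment versus citation. Your reformulation as a near-perfect matching problem in the linear $3$-partite hypergraph built from $(X,Y,C(G))$ with hyperedges $\{x,y,c(xy)\}$, the geometric degree decay over $T=\Theta(\gamma q^{-1}\log n)$ bites, and the Talagrand-based concentration with cumulative error dominated by the last round are all consistent with the standard derivation. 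The one point worth flagging is the error-propagation bookkeeping you yourself mention: the initial relative error $n^{-\eps}$ and the round-$t$ fluctuations of order $D_t^{-1/2}$ must both remain below the constant tolerance through all $T$ rounds, which is exactly where the hypotheses $\gamma\ll\eps$ and $n^{-1}\ll p$ are spent; your sketch asserts this works but does not actually carry it out, which is reasonable for an outline but is the delicate part that makes it more economical to cite the MPS lemma as the paper does.
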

\begin{proof}
Notice that $G$ is balanced bipartite with parts of size $(1\pm n^{-\eps})n$,  every vertex has degree $(1\pm n^{-\eps})pn$, and every colour occurs at most $(1+ n^{-\eps})pn$ times. Now the lemma is strictly weaker than Lemma~4.6 from \cite{montgomery2019decompositions} (applied with $n=n, \gamma=n^{-\eps}, \delta=p, p=n^{-\gamma}, \ell=1$). 
\end{proof}

The following lemma shows that a random subgraph of a typical bipartite graph is typical. There are two notions of what ``random subgraph'' means here. The most important one is to consider the subgraph formed by deleting every vertex/colour independently with fixed probability (case (a) below). We use the second case in Section 6 to reduce the problem of finding large matchings in Steiner systems to finding large matchings in special typical graphs.
\begin{lemma}\label{Lemma_random_subgraph_typical_graph}
Let $n^{-1}\ll p,q, \eps \leq 1$. Let $G$ be a coloured $(\eps, p, n)$-typical bipartite graph with bipartition $X,Y$ and colour set $C$. Let $X'\subseteq X, Y'\subseteq Y, C'\subseteq C$ be random sets obtained as follows:
\begin{enumerate}[(a)]
    \item Every vertex/colour ends up in $X',Y',C'$ independently with probability $q$.
    \item Suppose we can label $X=\{x_1, \dots, x_n\}$, $Y=\{y_1, \dots, y_n\}$, $C=\{c_1, \dots, c_n\}$ such that if $x_iy_j$ is an edge of $G$ of colour $c_k$ then all $i, j, k$ must be distinct. 
    Form $X',Y',C'$ by choosing disjoint set of indices $I_X, I_Y, I_C\subseteq [n]$ such that independently every $i\in [n]$ is placed in $I_X, I_Y$ and $I_C$ with probability $q$ and in none of them with probability $1-3q$. Set $X'=\{x_i:i\in I_X\}$, $Y'=\{y_i:i\in I_Y\}$, $C'=\{c_i:i\in I_C\}$
\end{enumerate}
Let $H$ formed by colour $C'$ edges going from $X'$ to $Y'$. Then with  probability at least $1-e^{-n^{1-\eps/2}}$, $H$ is coloured $(\eps/8, qp,qn)$-typical.
\end{lemma}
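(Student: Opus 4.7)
The plan is to unpack the definition of coloured $(\eps/8, qp, qn)$-typicality into a bounded list of concentration statements and verify each one via Azuma's inequality on an appropriate product probability space. The quantities to control are: the set sizes $|X'|, |Y'|, |C'|$ and $|C(H)|$; the degrees $d_H(v)$ for every $v\in X'\cup Y'$; the codegrees $|N_H(u)\cap N_H(v)|$ for same-side pairs; the colour-class sizes $|E_H(c)|$ for $c\in C'$; the degrees in $H_{X',C'}$ and $H_{Y',C'}$ (which, by proper edge-colouring, coincide with $d_H(v)$); and the pair-of-colour quantities $|V_H(c)\cap V_H(c')\cap X'|$ and $|V_H(c)\cap V_H(c')\cap Y'|$. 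The underlying product space will be $\{0,1\}^{|X|+|Y|+|C|}$ with independent Bernoulli$(q)$ coordinates in case (a); in case (b) it will be $\{1,2,3,4\}^n$ with independent coordinates, where the $i$-th coordinate records whether $i\in I_X$, $i\in I_Y$, $i\in I_C$, or $i$ lies in none of them, with probabilities $q,q,q,1-3q$.

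Expectations follow directly from the $(\eps,p,n)$-typicality of $G$. For instance, conditional on $v\in X'$, we have
\[
\E\bigl[d_H(v)\bigr] \;=\; q^{2}d_G(v) \;=\; q^{2}pn\bigl(1\pm n^{-\eps}\bigr),
\]
which matches the target $qp\cdot qn$ to within $n^{-\eps}$, and conditional on $u,v\in X'$,
\[
\E\bigl[|N_H(u)\cap N_H(v)|\bigr] \;=\; q^{3}|N_G(u)\cap N_G(v)| \;=\; q^{3}p^{2}n\bigl(1\pm n^{-\eps}\bigr),
\]
matching $(qp)^{2}\cdot qn$; the analogous statements for the colour-side quantities use the typicality of $G_{X,C}$ and $G_{Y,C}$. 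The key point is that each such variable is $O(1)$-Lipschitz on the relevant product space: toggling one coordinate changes at most one $Y'$-indicator (the one attached to a single neighbour) and at most one $C'$-indicator, and by proper colouring this indicator is present in at most $O(1)$ summands of any single degree/codegree/pair-intersection sum. Azuma's inequality (Lemma~\ref{Lemma_Azuma}) applied on the $O(n)$-dimensional product space with $c=O(1)$ then gives, for the deviation $t=(qn)^{-\eps/8}\,\E[X]$, a failure probability of at most $\exp\!\bigl(-\Omega(n^{1-\eps/4})\bigr)$, since $t^2/n$ is of order $q^{O(1)}p^{O(1)}n^{1-\eps/4}$ and $p,q$ are constants.

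A union bound over the $O(n^2)$ separate bad events (one per vertex, pair of vertices, colour, and pair of colours) then yields a total failure probability of at most $n^{2}\exp(-\Omega(n^{1-\eps/4}))\le\exp(-n^{1-\eps/2})$, as required. The main subtle step, and the only place where case (b) diverges from case (a) in any nontrivial way, is checking that the Lipschitz estimate survives the passage to $\{1,2,3,4\}^n$: here the indicators $\mathbf{1}[i\in I_X]$, $\mathbf{1}[i\in I_Y]$, $\mathbf{1}[i\in I_C]$ are correlated, so one cannot factor a coordinate into three independent Bernoullis, yet a single coordinate flip can still affect only one $Y'$-indicator and one $C'$-indicator relevant to any given sum — this is precisely where the hypothesis that $(i,j,k)$ be distinct for every edge is used, guaranteeing that a single index plays at most a bounded number of distinct structural roles. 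Beyond that disjointness bookkeeping, the proof reduces to the routine union bound described above.
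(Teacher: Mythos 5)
Your proof follows essentially the same route as the paper's: compute the expectations of the relevant size, degree, and codegree quantities from the $(\eps,p,n)$-typicality of $G$, observe that they are $O(1)$-Lipschitz, apply Azuma's inequality on the $O(n)$-coordinate product space, and take a union bound over polynomially many bad events. The paper streamlines the bookkeeping slightly by verifying only the (uncoloured) typicality of $H$ and then invoking the symmetry among $X$, $Y$, $C$ to handle $H_{X,C}$ and $H_{Y,C}$ by the same argument; your list-everything approach is equivalent and just a bit more verbose.

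The one substantive point worth correcting is your account of where the ``distinct indices'' hypothesis in case (b) is used. You locate it in the Lipschitz estimate, but in fact the Lipschitz bound is robust to this hypothesis: in $\{1,2,3,4\}^n$, flipping a single coordinate $i$ moves at most one vertex $y_i$ and at most one colour $c_i$, and by proper edge-colouring each of these features in $O(1)$ summands of any degree or codegree count, with or without the distinctness assumption. Where the hypothesis is actually essential is the expectation computation. To conclude, e.g., that $\E[d_H(v)]=q^2 d_G(v)$ in case (b), you need $\Prob[y_i\in Y',\,c_j\in C']=q^2$ for each neighbour $y_i$ of $v$ with $c(vy_i)=c_j$; since a single index cannot lie in both $I_Y$ and $I_C$, this probability is $0$ whenever $i=j$, and the distinctness hypothesis rules out precisely this degeneracy (and the analogous three-way coincidence needed for the codegree expectation $q^3 p^2 n(1\pm n^{-\eps})$). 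This is also what the paper emphasizes when treating case (b). Your argument is otherwise sound.
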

\begin{proof}
We will show that with probability  $1-\frac13e^{-n^{1-\eps/2}}$, $H$ is (uncoloured)  $(\eps/8, qp,qn)$-typical. By symmetry between $X,Y,C$, the same proof shows that $H_{X,C}$ and $H_{Y,C}$ are $(\eps/8, qp,qn)$-typical. Thus we will have that with probability $1-e^{-n^{1-\eps/2}}$ all  of $H$, $H_{X,C}$, and $H_{Y,C}$ are $(\eps/8, qp,qn)$-typical, or equivalently $H$ is coloured $(\eps/8, qp,qn)$-typical. To give a unified proof of both statements $(a), (b)$ we will use Azuma's inequality.

Let $u,v$ be two vertices on the same side of $G$, and $y\in N_G(u)\cap N_G(v)$. Without loss of generality, we may suppose that $u,v\in X, y\in Y$. 
Notice that in both (a) and (b) we have $\Prob(u\in X') =q$,  $\Prob(c(uy)\in C', y\in Y')= q^2$, and $\Prob(c(uy), c(vy)\in C', y\in Y')=q^3$. Indeed, here we use that $c(uy)\in C'$, $c(vy)\in C'$, and $y\in Y'$ are independent events which is true in  case (a) trivially. For case (b), suppose $y=y_i$, $c(uy)=c_j$, $c(vy)=c_k$, it is enough to show that all three indices $i,j,k$ are distinct. Indeed, since $uy, vy\in E(G)$, it follows that $i$ and $k$, and $j$ and $k$ are distinct. Finally, $j$ and $k$ are distinct since the edge-colouring is proper. Since $G$ is    $(\eps, p, n)$-typical,  we have $|X|,|Y| =(1\pm n^{-\eps})n$,
$|N_G(u)|= (1\pm n^{-\eps})pn$ and $|N_G(u)\cap N_G(v)|=(1\pm n^{-\eps})p^2n$. 
Thus we have
\begin{align*}
 \E[|X'|],\E[|Y'|]&=(1\pm n^{-\eps})qn\\
 \E[|N_{C'}(u)\cap Y'|]&=(1\pm n^{-\eps})q^2pn,\textit{ for all }   u\in X \\ 
\E[|N_{C'}(u)\cap N_{C'}(v)\cap Y'|]&= (1\pm n^{-\eps})q^3p^2n, \textit{ for all }  u, v\in X.
\end{align*}
Notice that these random variables are all $2$-Lipschitz and are each affected by at most $3(1+n^{-\eps})n \leq 4n$ coordinates. By Azuma's inequality we get that for $t=q^3p^2n^{1-\eps/8}/2$ with probability  $1-2e^{-t^2/16n} \geq 1-e^{-n^{1-3\eps/8}}$,  each one of them are within $t$ of their expectations. By taking union  bound over all vertices and colours we obtain that  with probability at least $ 1-(n+n^{-\eps})^{3}e^{-n^{1-3\eps/8}} \geq 1-\frac13e^{-n^{1-\eps/2}}$ they are all simultaneously within $t$ of their expectations. So now the result follows from the definition of $(\eps/8, qp,qn)$-typicality and the fact that 
$t+n^{1-\eps}<q^3p^2n^{1-\eps/8}$. 
\end{proof}

 We will need the following result about typical graphs. It is a bipartite variation of Lemma 5.5 from~\cite{montgomery2019decompositions} (see also Lemma 2.1 in \cite{AKS}),   whose proof is straightforward from the original version.   
\begin{lemma}\label{lem:typical-jumbled} Let $n\in \mathbb{N}$, $\eps,p, \gamma\in (0,1]$ with $8n^{-\eps}\leq \gamma$. Then every  $(\eps, p,n)$-typical bipartite graph $H$ with sides $X,Y$ satisfies the following. For every pair of subsets $A\subseteq X$, $B\subseteq Y$ with 
$|B|\geq \gamma^{-1}p^{-2}$:
$$|e(A,B)- p|A||B||\leq 2|A|^{\frac12}|B|\gamma^{\frac12}n^{\frac12}p.$$
\end{lemma}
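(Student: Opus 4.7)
The plan is a second-moment computation, where the key twist is to take the variance-style sum over the full side $X$ rather than only over $A$. Setting $d_B(x) := |N(x) \cap B|$, I would first observe that $e(A,B) - p|A||B| = \sum_{a \in A}(d_B(a) - p|B|)$, and then apply Cauchy-Schwarz together with the trivial relaxation $\sum_{a \in A}(d_B(a)-p|B|)^2 \leq \sum_{x \in X}(d_B(x)-p|B|)^2$ to obtain
\[
\big(e(A,B) - p|A||B|\big)^2 \;\leq\; |A| \cdot \sum_{x \in X}\big(d_B(x) - p|B|\big)^2.
\]
The task then reduces to controlling this last sum, which I will call $S$.

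To estimate $S$ I would expand the square and swap orders of summation, writing $\sum_{x \in X} d_B(x)^2 = \sum_{b \in B} d(b) + \sum_{b_1 \neq b_2 \in B} |N(b_1) \cap N(b_2)|$ (using $N(b) \subseteq X$ for $b \in B$). Degree typicality gives $\sum_{b \in B} d(b) = (1 \pm n^{-\eps})p|B|n$, codegree typicality (legitimate since $B \subseteq Y$) gives $\sum_{b_1 \neq b_2 \in B} |N(b_1) \cap N(b_2)| = (1 \pm n^{-\eps})|B|(|B|-1)p^2 n$, and regularity gives $|X| = (1 \pm n^{-\eps})n$. Crucially, three copies of the ``main term'' $p^2|B|^2 n$ enter the expansion of $S$ with coefficients $+1, -2, +1$ and \emph{cancel exactly}, leaving only the subleading contributions $S \leq 2p|B|n + 4p^2|B|^2 n^{1-\eps}$.

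The two hypotheses are then each used once to finish. The bound $|B| \geq \gamma^{-1}p^{-2}$ gives $|B|\gamma p \geq 1$, which absorbs the term $|A| \cdot 2p|B|n$ into $2|A||B|^2 p^2 \gamma n$; the bound $\gamma \geq 8 n^{-\eps}$ absorbs $|A| \cdot 4 p^2|B|^2 n^{1-\eps}$ into $2|A||B|^2 p^2 \gamma n$. Adding these and taking square roots yields the claimed estimate. The main obstacle is arranging the cancellation at the right level: a naive variant that sums directly over $a \in A$ and codegree-bounds $|N(b_1) \cap N(b_2) \cap A|$ by $(1+n^{-\eps})p^2 n$ leaves a residual of order $|A| p^2|B|^2 n$ which cannot be absorbed when $\gamma$ is small, because the codegree upper bound is only tight for $A$ when $A$ is essentially all of $X$. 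Summing over all of $X$ sidesteps this, trading the irrelevant slack from vertices outside $A$ for access to the tight two-sided typicality estimates that make the cancellation possible.
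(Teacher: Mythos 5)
Your proof is correct and takes essentially the same route as the paper's: a second-moment computation via Cauchy–Schwarz, with the crucial relaxation from summing over $A$ to summing over all of $X$ so that the two-sided degree and codegree estimates of typicality can be used to produce the main-term cancellation (the paper writes this in matrix notation with $M = \mathrm{Adj}_H - pJ$, where your $d_B(x) - p|B|$ is exactly $\sum_{y\in B}M_{x,y}$). The only difference is bookkeeping — the paper isolates the cancellation $d_H(y,y') - p(d(y)+d(y')) + p^2|X| \leq \gamma p^2 n/2$ into a separate display before expanding the square, whereas you expand first and cancel globally — but the underlying identity and estimates are the same.
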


\begin{proof}
Let $\mathrm{Adj}_H$ be the adjacency matrix of $H$, and let $M=\mathrm{Adj}_H-pJ$ where $J$ is the appropriately-sized all-ones matrix.
Notice that for every pair of distinct vertices $y,y'\in Y$,  we have
\begin{eqnarray}
\sum_{v\in X}M_{y,v}M_{y',v}&=& d_H(y,y')-p(d(y)+d(y'))+p^2|X|\leq (1+n^{-\eps})p^2 n- 2(1-n^{-\eps})p^2 n+p^2(1+n^{-\eps})n \nonumber \\
&\leq&\gamma p^2 n/2.\label{exponew}
\end{eqnarray}
Next notice that we have
\begin{eqnarray*}
\big|e(A,B)- p|A||B|\big|^2 &=&\left(\sum_{x\in A}\sum_{y\in B} M_{x,y}\right)^2
 \leq |A|\sum_{x\in A}\left( \sum_{y\in B} M_{x,y}\right)^2
 \leq |A|\sum_{x\in X}\left( \sum_{y\in B} M_{x,y}\right)^2\\
 &=& |A|\sum_{x\in X}\left( \sum_{y\in B} M_{x,y}^2\right) +|A|\sum_{x\in X}\left(\sum_{y\neq y'\in B} M_{x,y}M_{x,y'}\right)\\
 &\leq& |X||A||B|+ |A| \sum_{y\neq y'\in B}\left(\sum_{x\in X} M_{x,y}M_{x,y'}\right)\\
 &\overset{\eqref{exponew}}{\leq}&  (1+n^{-\eps})n|A||B|+ |A|\sum_{y\neq y'\in B} \frac{\gamma  p^2n}{2}
 \leq  (1+n^{-\eps})n|A||B|+ |A||B|^2 \gamma  p^2n/2\\
 &\leq&  2|A||B|^2 \gamma  p^2n
\end{eqnarray*}
Here the first inequality comes from the Cauchy-Schwarz inequality
and the last inequality comes from $|B|\geq \gamma^{-1}p^{-2}$. Taking square roots gives the result.
\end{proof}

Next we show that the above result implies that for a coloured typical graph $G$ and any  set of $d$ many colours in $G$, the subgraph of $G$ induced by the edges of colours in $D$ can have at most  $O(n/d)$ many vertices of small degree, for $d=O(n^{\eps}).$

\begin{lemma}\label{lem:fewisolated}Let $n^{-1} \ll p, \eps \leq 1$, $16\leq 8p^2d\leq n^{\eps}$. Suppose $G$ is a coloured $(\eps,p,n)$-typical bipartite graph with bipartition $(X,Y)$ and colour set $C$. Then for any set of $d$ colours $D$  the subgraph $G[D]\subseteq G$ induced by edges of colours in $D$ has at most  $\leq 32p^{-2}n/d$ many vertices of degree less than $pd/2$.
\end{lemma}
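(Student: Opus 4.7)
The plan is to reduce to Lemma~\ref{lem:typical-jumbled} via the auxiliary incidence graphs $G_{X,C}$ and $G_{Y,C}$ provided by property (P5). Observe that for $x\in X$, the degree of $x$ in $G[D]$ equals $|N_{G_{X,C}}(x)\cap D|$, because the colouring is proper and hence each colour in $D$ contributes at most one edge at $x$. So counting low-degree vertices in $G[D]$ becomes counting vertices in $G_{X,C}$ (resp.\ $G_{Y,C}$) whose $D$-degree is much smaller than the $p|D|$ value expected by typicality.

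Let $A\subseteq X$ be the set of vertices whose degree in $G[D]$ is less than $pd/2$, and set $B:=D\subseteq C$, viewed as a subset of the colour-side of $G_{X,C}$. By the description above,
\[e_{G_{X,C}}(A,B)=\sum_{x\in A} d_{G[D]}(x)< \tfrac{pd}{2}|A|=\tfrac{p}{2}|A||B|,\]
so $|e_{G_{X,C}}(A,B)-p|A||B||>\tfrac{p}{2}|A||B|$. Since $G_{X,C}$ is $(\eps,p,n)$-typical by (P5), I plan to invoke Lemma~\ref{lem:typical-jumbled} with $\gamma:=1/(p^{2}d)$. The hypothesis $|B|\geq \gamma^{-1}p^{-2}$ becomes $d\geq d$, which holds, and the hypothesis $8n^{-\eps}\leq\gamma$ becomes $8p^{2}d\leq n^{\eps}$, which is exactly the assumption of the lemma.

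Plugging into Lemma~\ref{lem:typical-jumbled} then gives
\[\tfrac{p}{2}|A||B|\leq 2|A|^{1/2}|B|\gamma^{1/2}n^{1/2}p,\]
which simplifies to $|A|\leq 16\gamma n=16p^{-2}n/d$. The symmetric argument applied to $G_{Y,C}$ bounds the number of low-degree vertices on the $Y$-side by the same quantity $16p^{-2}n/d$. Adding the two bounds yields the claimed total of at most $32p^{-2}n/d$. There is no real obstacle here — the whole proof is a clean application of Lemma~\ref{lem:typical-jumbled} once one recognises that degrees in $G[D]$ are exactly incidence degrees in $G_{X,C}$ (and $G_{Y,C}$); the only point requiring a little care is the choice of $\gamma$ so that both the lower bound on $|B|$ and the condition $8n^{-\eps}\leq\gamma$ are met under the given hypothesis $16\leq 8p^{2}d\leq n^{\eps}$.
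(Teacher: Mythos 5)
Your proof is correct and matches the paper's argument essentially line for line: both apply Lemma~\ref{lem:typical-jumbled} to the low-degree set on each side against $D$ inside the incidence graphs $G_{X,C}$ and $G_{Y,C}$, with the same choice $\gamma = p^{-2}d^{-1}$, and the resulting inequality simplifies to $|A|\leq 16\gamma n = 16p^{-2}n/d$ per side. The only addition you make is spelling out why $d_{G[D]}(x) = |N_{G_{X,C}}(x)\cap D|$ (properness of the colouring), which the paper leaves implicit.
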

\begin{proof} Let $J$ be the set of vertices in $G[D]$ of degree less than $pd/2$. We will show that $|J\cap X| \leq 16p^{-2}n/d$. Similarly one can prove that  $|J\cap Y| \leq 16p^{-2}n/d$. Recall $G_{X,C}$ is defined on the vertex bipartition  $(X,C)$ where we put an edge $xc$ if there is some $y\in Y$ such that $xy\in E(G)$ and $c(xy)=c$. By definition of coloured typical graphs, $G_{X,C}$ is $(\eps,p,n)$-typical hence  we can apply Lemma~\ref{lem:typical-jumbled} to the set $J\cap X\subseteq X$ and $D\subseteq C$ with $\gamma =p^{-2} d^{-1}$ (so that $|D|\geq \gamma^{-1}p^{-2}$). We obtain 
$$p|J\cap X|d - 2p|J\cap X|^{1/2}d\gamma^{1/2}n^{1/2}\leq e_{G_{X,C}}(J\cap X, D) <  |J\cap X| pd/2.$$
From here it follows that $|J\cap X|\leq 16 \gamma n\leq 16 p^{-2}n/d.$
\end{proof}

\section{Expansion and its properties} 
\label{sec:expansion}
The proof of our main technical theorem, which we present in this section, is  based on finding a nearly spanning randomized rainbow matching $M$ which ``expands'' in some sense. We then show that these expansion properties can be used to alter $M$ via series of switchings along alternating paths to obtain a new matching  covering all but $O(\log n/\log\log n)$ vertices.

\subsection{Typical coloured graphs are expanding}
 In this subsection we prove that every typical graph has a large matching which is  ``expanding''  with respect to any small collection of colours. First we define what we mean by expanding. 
 Since by itself a matching is clearly not expanding graph, we will always speak about expansion properties of a \emph{union of two graphs}, one of which will always be a matching.
 Let  $G,H$ be two graphs.  Recall that for a set $S\subseteq V(G)\cup V(H)$,  we use $N^t_{G,H}(S)$ to denote the set of vertices $v$ to which there is a length $t$ path  from some $s\in S$ whose edges alternate between $G$ and $H$ with the first edge belonging to $G$. The following definition is key in this paper.
\begin{definition}[Expander]\label{def:expander} For a  matching $M$ and a bipartite graph $D$ we say that $(D, M)$ is a $(d, A, \eps,n)$-expander if   every vertex set $S\subseteq X$ or $S\subseteq Y$ with $|S|\geq An/d$  has a subset $S'$ with $|S'|=An/d^2$ and $|N^4_{D, M}(S')|\geq (1-\eps)n$, where $(X,Y)$ is the bipartition of $D\cup M$ with $|X|,|Y|\geq An/d$. \end{definition}

Note that, since in this definition the last edge on the $4$-edge path is from $M$, it follows that if $(D,M)$ is a $(d, A, \eps,n)$-expander then $|M|\geq (1-\eps)n$. We also want to point out few additional subtleties. First, it would be more natural to ask $|N^2_{D, M}(S)|$ to be of order $(1-o(1))n$. However, it is not true that the second neighbourhoods expand (see details in the proof sketch of Lemma~\ref{Lemma_main_expansion}). Second, we have a stronger requirement that $S$ has a subset of size $\Theta(n/d^2)$ which expands. This is done for the following two technical purposes.

We are able to show that every coloured pseudorandom graph $G$ has a random rainbow matching such for any subgraph $D$ induced by edges of any collection of $d=\log{n}/\log{\log{n}}$ many colours, every $S$ of size roughly $n/d$ expands (in the above sense) with probability  at least $1-e^{-|S|}$ (see Lemma~\ref{Lemma_Mnp_second_neighbourhood_expansion}). Then we would like to claim, by taking the union bound, that with high probability all sets $S$ expand simultaneously. Unfortunately, the probability that there is some $S$ which does not expand is at most roughly $\binom{n}{|S|} e^{-|S|} \gg 1$. Instead, for each non-expanding set $S$ we find a smaller set $S'$ of size $|S|/d$ which 'captures' the expansion properties of $S$.  Now the union bound gives us that the probability that some set $S$ does not expand is at most $\binom{n}{|S|/d} e^{-|S|} \ll 1$. (This idea is similar to the applications of containers widely used in studying $H$-free graphs for fixed $H$, where one shows that there is a  collection of containers of bounded size which contain all $H$-free graphs of certain size). 

The second reason to have a smaller subset $S'$ which captures the expansion of $S$ is for finding rainbow $D-M$ alternating paths between almost all $x\in X$ and $y\in Y$ (see Section~\ref{sec:switching} for details). Let's assume $S\subseteq Y$. As the first step to achieve this, we need to show  that almost all vertices $y\in Y$ have some rainbow alternating $D-M$ path of length four starting at some $s\in S$ and ending at $y$. Furthermore, we need that each of these paths avoids a prescribed set of colours and vertices of order $\eps d$. Since $|N^4_{D,M}(S')|=(1-o(1))n$ for each $y\in Y$ there is a $D-M$-alternating path that starts at some $s\in S'$ and ends at $y$.  Some of these paths can be \emph{bad} if either they are not rainbow or they do not avoid the prescribed set of vertices and colours. The number of such bad paths is at most roughly $\eps d^2|S'|$ where $|S'|$ factor comes for the choice of starting vertex in $S'$, $\eps d$ comes from using a forbidden vertex or a colour and the second $d$ factor is due to the fact that  $\Delta(D)\leq d$ (see Lemma~\ref{Lemma_expansion_conversion}). So using that $|S'|\approx n/d^2$ we conclude that the number of bad paths is at most $\eps n$.

The main result of this section is to prove the following expansion properties of coloured typical bipartite graphs.

\begin{lemma} [Main expansion lemma]\label{Lemma_main_expansion}
Let $n^{-1}\ll q \ll p \leq 1$, $n^{-1}\ll \gamma \ll \eps \ll 1$ and $n^{-\eps/2} \leq d^{-1}\ll q$.  Suppose $G$ is a coloured $(\eps, p, n)$-typical bipartite graph.  Then there is a randomized rainbow matching $M$ in $G$ with the following property.  

For any  bipartite graph $D$ on the same biparition as $G$ with $\Delta(D)\leq d$ and  at most $96p^{-2}n/d$ vertices of degree less than $pd/6$, with probability at least $1-2e^{-n^{1-\eps}}$  the following hold:
\begin{enumerate}[(i)]
\item $|M|\geq (1-n^{-\gamma})n$,
\item $(D, M)$ is a $(d, q^{-4}, q, n)$-expander.
\end{enumerate}
\end{lemma}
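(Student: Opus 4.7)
I build $M$ in two stages. \emph{First bite}: include each edge of $G$ independently with probability $q/n$ and delete every included edge that shares a vertex or colour with another included edge; call the resulting rainbow matching $M_0$. By Talagrand (Lemma~\ref{Lemma_Talagrand}), $|M_0|=\Theta(pqn)$ with failure probability $e^{-\Omega(n)}$, and by Lemma~\ref{Lemma_random_subgraph_typical_graph}(a) the residual graph $G':=G[V\setminus V(M_0)]$ (with colours in $C\setminus C(M_0)$) is coloured $(\eps/8,p,(1-\Theta(q))n)$-typical with failure probability $e^{-n^{1-\eps/2}}$. \emph{Extension}: applying Lemma~\ref{Lemma_variant_of_MPS_nearly_perfect_matching} to $G'$ yields a rainbow matching $M_1$ in $G'$ covering all but $n^{1-\gamma}$ vertices on each side. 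Setting $M:=M_0\cup M_1$ gives a rainbow matching in $G$ with $|M|\geq n-n^{1-\gamma}\geq (1-n^{-\gamma})n$, proving~(i).

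For~(ii), fix any $D$ satisfying the hypothesis. Because $D$ has at most $96p^{-2}n/d\leq q^{-4}n/(2d)$ vertices of $D$-degree below $pd/6$ (using $q\ll p$), every $S$ of size $\geq q^{-4}n/d$ on either side contains a subset $S'$ of size $q^{-4}n/d^2$ all of whose vertices have $d_D(v)\geq pd/6$. It therefore suffices to show that with probability at least $1-e^{-n^{1-\eps}}$, \emph{every} such $S'$ satisfies $|N^4_{D,M}(S')|\geq (1-q)n$.

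Fix one candidate $S'\subseteq X$ and set $T:=N^2_{D,M}(S')\subseteq X$. Unpacking the definitions, $|N^4_{D,M}(S')|=|N_D(T)\cap V(M)\cap Y|\geq|N_D(T)\cap Y|-n^{1-\gamma}$, so it suffices to show $|N_D(T)\cap Y|\geq(1-q/2)n$ with failure probability $e^{-\Omega(n/d)}$. I prove this by two rounds of concentration on the edge-product space of the bite. First, since $|N_D(S')\cap Y|\geq q^{-4}pn/(6d^2)$ deterministically (from $\sum_{v\in S'}d_D(v)\geq q^{-4}pn/(6d)$ and $\Delta(D)\leq d$), and each $u\in N_D(S')\cap Y$ lands in $V(M)$ with constant probability, Talagrand's inequality yields $|T|\geq c|N_D(S')|$ with probability $1-e^{-\Omega(n/d)}$. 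Second, conditional on $|T|$ being this large, the randomness of the bite spreads $T$ quasirandomly across $X$; combined with Lemma~\ref{lem:fewisolated} (few low-$D$-degree vertices on $Y$) and the jumbledness estimate of Lemma~\ref{lem:typical-jumbled} inside the typical graph $G$, this gives $|N_D(T)\cap Y|\geq(1-q/2)n$ with the same concentration. Both steps are $O(d)$-Lipschitz with $O(1)$-certifiability on the $|E(G)|$-dimensional product space, so Lemma~\ref{Lemma_Talagrand} (augmented where needed by the variance form Lemma~\ref{Lemma_Azuma_variance}) applies.

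Finally, there are at most $\binom{n}{q^{-4}n/d^2}\leq e^{O(n\log d/d^2)}$ candidate subsets on each side, and the regime $d^{-1}\gg n^{-\eps/2}$ gives $n\log d/d^2\ll n/d$, so the union bound yields total failure probability at most $e^{-n^{1-\eps}}$. \emph{The main obstacle} is the quasirandomness analysis of $T$: since $D$ is adversarial (only bounded degree and few low-degree vertices, not typical), the only source of randomness is the bite, and one must carefully exploit the coloured typicality of the host graph $G$ to argue that $T$'s $D$-neighbourhood covers nearly all of $Y$. Tracking the Lipschitz constants, certifiability, and variance across the two-step cascade $S'\to T\to N_D(T)$ tightly enough to beat the entropy $e^{O(n\log d/d^2)}$ of the union bound is the core technical challenge.
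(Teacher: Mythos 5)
There is a genuine gap in step~(ii), and it lies exactly where you flagged the ``core technical challenge.'' You ask for \emph{every} subset $S'$ of $S$ of size $q^{-4}n/d^2$ consisting of high-$D$-degree vertices to satisfy $|N^4_{D,M}(S')|\geq(1-q)n$. This is both too strong and quantitatively impossible. Consider the paper's own cautionary example: $D$ a disjoint union of $K_{d,d}$'s. Taking $S'$ to be the $X$-sides of a few blocks gives $|N_D(S')|=|S'|=q^{-4}n/d^2$, so $T:=N^2_{D,M}(S')$ has $|T|\leq|N_D(S')|=q^{-4}n/d^2$, and since $\Delta(D)\leq d$ we get $|N_D(T)|\leq d|T|\leq q^{-4}n/d\ll n$ whenever $d\gg q^{-4}$ (which is the intended regime $d=\Theta(\log n/\log\log n)$, $q$ constant). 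So $|N_D(T)\cap Y|\geq(1-q/2)n$ simply cannot hold for this $S'$, no matter how quasirandom $T$ is. The expander definition only requires that \emph{some} $S'\subseteq S$ expand, and the paper exploits this freedom essentially: Lemma~\ref{Lemma_regular_bipartite_containers} (a star-packing argument) finds an $S'\subseteq S$ of size $\Theta(n/d^2)$ with $|N_D(S')|\geq n/(dq^3)$, which is larger than your lower bound $pq^{-4}n/(6d^2)$ by a factor of $\Theta(d)$. That extra factor of $d$ is precisely what makes the subsequent expansion step work.

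There is a second, conceptual problem in the same place. You write that ``the randomness of the bite spreads $T$ quasirandomly across $X$,'' but $T=N^2_{D,M}(S')$ routes the middle step through $M=M_0\cup M_1$, and $M_1$ is the output of the black-box Lemma~\ref{Lemma_variant_of_MPS_nearly_perfect_matching} --- a deterministic, uncontrolled function of $M_0$. There is no randomness left in $M_1$ to concentrate over, and nothing forbids the black box from producing a matching that happens to align with the block structure of an adversarial $D$ (the expander property must hold for every $D$, with high probability over $M$). The paper circumvents this by always routing the crucial random step through $M_0$: it proves third-neighbourhood expansion $|N^3_{D,M_0}(S')|\geq(1-6\hat q)n$ via Lemmas~\ref{Lemma_Gnp_second_neighbourhood_expansion}--\ref{Lemma_expansion_3rd_neighbourhood} (with the union bound over the specific $S'$'s produced by the star-packing lemma), and only uses $M_1$ in the fourth step, where it merely needs $V(M)$ to miss at most $O(n^{1-\gamma})$ vertices of $Y$. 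Your two ``rounds of concentration'' are therefore replaced in the paper by one carefully placed concentration inequality (on $M_0$-$D$ paths from a set of size $\sim n/(dq^3)$), followed by a near-trivial subtraction. As a minor additional note, your appeals to Lemmas~\ref{lem:fewisolated} and~\ref{lem:typical-jumbled} in the second round are not clearly applicable: those statements control the host graph $G$, whereas $D$ here is an arbitrary bounded-degree bipartite graph, not a priori a colour-induced subgraph of $G$.
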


The proof of Lemma~\ref{Lemma_main_expansion} is technical. Here we present a quick sketch. The randomized matching $M$ in Lemma~\ref{Lemma_main_expansion} will be composed of two bits --- $M_0$ and  $M_1$. 
 First we choose $M_0$ by picking every edge in $G$ with probability $q/n$ and deleting all colour and vertex collisions, for some $0<q\ll 1$. This matching will be of size roughly $q n$ and will satisfy certain ``expansion'' properties in second and third neighbourhoods which we describe next. Fix some $D$ as in the statement of Lemma~\ref{Lemma_main_expansion}  and suppose $(X,Y)$ is the bipartition of $M_0\cup D$. We first show that $M_0$ has the following property:
 
 \begin{itemize}
     \item  \textbf{Second neighbourhood expansion:}  For each set $S\subseteq X $ or  $S\subseteq Y$ of size roughly $n/d$ we have $|N_{M_0,D}^2(S)|= (1-o(1))n$  with probability $1-e^{-O(|S|)}$(see Lemma~\ref{Lemma_Mnp_second_neighbourhood_expansion}). 
 \end{itemize}

 The above property simply means for say $S\subseteq X$ that if we follow edges coming out of $S$  that belong to $M_0$ and then follow the edges of $D$ we reach almost all of $X$. Notice that we cannot prove  that the second neighbourhood is large for all sets $S$ simultaneously. Indeed, let $D$ be a disjoint union of complete bipartite graphs of size $d$ on $X,Y$ with $|X|=|Y|=n$. Now let $M_0$ be any perfect matching on $K_{n,n}$. Now let $H$ any union of disjoint $K_{d,d}$'s in $D$.  If we let $S=V(H)\cap X$, then  $|N^2_{D,M}(S)|=|N_M(V(H)\cap Y)|=|S|$ that is $S$ won't expand.  Note that    it doesn't matter whether we follow the edges in the order of $M$ and then $D$ or otherwise. Indeed, take $S'=N_{M}(S)$, then $|N^2_{M,D}(S')|=|N_D(S)|=|S|=|S'|$. However, if we look at the same example in the third neighbourhood, that is $N^3_{D,M}(S)$ and let's assume $|S|=n/d$ then if $M$ was a raindomly picked perfect matching, it is likely that  $N^2_{D,M}(S)$ will hit a vertex from each $K_{d,d}$ outside of $H$ thus resulting $N^3_{D,M}(S)$ being almost of of $X$. And this is what we prove; using the second neighbourhood expansion and that $M_0$ is picked randomly, we show that  with high probability  \emph{all}  large sets $S$ have expansion in their third neighbourhood.
 \begin{itemize}
     \item \textbf{Third neighbourhood expansion:}  With high probability, all   sets $S\subseteq X $ or  $S\subseteq Y $ of size roughly $n/d$  will have subsets $S'$ of size roughly $n/d^2$ and $|N_{D,M_0}^3(S')|= (1-o(1))n$. (see Lemma~\ref{Lemma_expansion_3rd_neighbourhood}). 
 \end{itemize}
  Finally notice that to obtain the expansion  in the fourth neighbourhood in the sense of Definition~\ref{def:expander}, $M_0$ is not enough, as it is only of size roughly $q n$. That is why we need to extend $M_0$ to a nearly spanning rainbow matching.
 Let  $H$ be obtained from $G$ by deleting vertices colours of $M_0$. Since $G$ was coloured regular and $M_0$ was picked randomly, $H$ will be coloured regular as well (Lemma~\ref{Lemma_degree_concentration}). We find a nearly spanning rainbow matching $M_1$ in $H$, which by definition of $H$, will be edge and colour disjoint from $M_0$. This is done by applying Lemma~\ref{Lemma_variant_of_MPS_nearly_perfect_matching} to $H$, which gives a rainbow matching $M_1$ of size roughly $n-|M_0|- n^{1-\gamma}$. Then, taking  $M=M_0\cup M_1$, for any nearly regular graph $D$ on $V(G)$ with high probability we obtain that all large sets will expand as in the Definition~\ref{def:expander}.

We start with an easy lemma exhibiting a feature of nearly-regular graphs. 

\begin{lemma}\label{Lemma_regular_bipartite_containers}
For  $\kappa\leq 1 \leq d$, let $D$ be a bipartite graph with bipartition $(X,Y)$ and $\Delta(D)\leq d$. For any $S\subseteq X$ or $S\subseteq Y$ with  $|S|\geq 2d$ and every $s\in S$ satisfying $d_D(s)\geq \kappa d$, there is a set $S'\subseteq S$ such that $|S'|\leq |S|/d$ and  $|N_D(S')|\geq \kappa |S|/4$.
\end{lemma}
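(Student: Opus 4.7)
The plan is to build $S'$ greedily and use a simple double-counting argument. First I would set $S_0' = \emptyset$ and iteratively define $S_{i+1}' = S_i' \cup \{s_{i+1}\}$, where $s_{i+1} \in S \setminus S_i'$ is chosen to maximize $|N_D(s_{i+1}) \setminus N_D(S_i')|$. I would stop the process as soon as $|N_D(S_i')| \geq \kappa|S|/4$, and take $S'$ to be the resulting set.

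Next I would analyze the progress made by each greedy step. Suppose that after step $i$ we still have $|N_D(S_i')| < \kappa|S|/4$. Since $\Delta(D) \leq d$, every vertex of $N_D(S_i')$ receives at most $d$ edges from $S$, so the total number of edges from $S$ into $N_D(S_i')$ is at most $d \cdot |N_D(S_i')| < \kappa d |S|/4$. On the other hand, by the hypothesis $d_D(s) \geq \kappa d$ for every $s \in S$, the total number of edges leaving $S$ is at least $\kappa d|S|$. Because every edge incident to $S_i'$ lands in $N_D(S_i')$, the edges from $S \setminus S_i'$ to $V(D) \setminus N_D(S_i')$ number at least
\[
\kappa d|S| - d|N_D(S_i')| \;>\; \kappa d|S| - \kappa d|S|/4 \;=\; 3\kappa d|S|/4.
\]
Averaging over $S \setminus S_i'$ (of size at most $|S|$) gives a vertex $s_{i+1}$ contributing at least $3\kappa d/4$ fresh neighbors, so $|N_D(S_{i+1}')| \geq |N_D(S_i')| + 3\kappa d/4$.

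Finally I would combine these two ingredients. The process therefore terminates after at most $t = \lceil |S|/(3d)\rceil$ steps, giving a set $S'$ of size $|S'| \leq t$ with $|N_D(S')| \geq \kappa|S|/4$. Since $|S| \geq 2d$ we have $|S|/(3d) \geq 2/3$, so $\lceil |S|/(3d)\rceil \leq |S|/(3d) + 1 \leq |S|/d$ (the last inequality is equivalent to $|S| \geq 3d/2$, which follows from $|S| \geq 2d$). Thus $|S'| \leq |S|/d$, as required.

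There is no substantive obstacle in this argument; the only point requiring minor care is the final arithmetic that converts $\lceil |S|/(3d)\rceil$ into the desired bound $|S|/d$, which uses the hypothesis $|S| \geq 2d$ in an essential (but elementary) way. The non-existence of vertices in $S \setminus S_i'$ during the process is not an issue because we terminate once $|N_D(S_i')|$ reaches the threshold, and in any case $|S_i'| \leq |S|/d < |S|$.
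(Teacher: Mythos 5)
Your proof is correct, and it takes a genuinely different (though equally elementary) route from the paper. The paper constructs a maximal collection of vertex-disjoint stars of size $\kappa d/2$ with centers in $S$ and lets $F$ be its vertex set: if $|F\cap X|\ge |S|/2d$ one just takes $|S|/2d$ centers as $S'$ (their disjoint stars already give $\kappa|S|/4$ neighbours); otherwise maximality forces $|N_D(x)\cap F|\ge \kappa d/2$ for every $x\in S\setminus F$, and a single double count of $e(S\setminus F, Y\cap F)$ using $\Delta(D)\le d$ shows $|Y\cap F|>\kappa|S|/4$, so $S'=F\cap X$ works. Your argument replaces the star packing with a max-coverage greedy choice and runs the same two-sided double count (minimum degree $\ge\kappa d$ on one side, maximum degree $\le d$ on the other) at each step, concluding that each greedy vertex adds at least $3\kappa d/4$ fresh neighbours until the threshold is reached. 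Both are valid; the star-packing version gives $|S'|\le |S|/2d$ directly with no ceilings or per-step accounting, while the greedy version makes the potential-function style progress more explicit and needs the extra (but trivial) $|S|\ge 3d/2$ arithmetic at the end. One small point worth stating explicitly in a write-up: before applying the averaging step you should observe that $S\setminus S_i'\ne\emptyset$, which you do correctly note follows from $|S_i'|=i<|S|/(3d)<|S|$.
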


\begin{proof} Take the maximal collection of vertex-disjoint stars of size $\kappa d/2$ in $D$ whose centers are in $S$ and let $F$ be the vertex set of their union. We are done if $| F\cap X|\geq |S|/2d$. Indeed, in this case any set $S'\subseteq S$   containing $|S|/2d$ many of the centers of the stars of $F$ satisfies the lemma. So, we may assume  $|F\cap X|< |S|/2d$. Since $d_D(x)\geq \kappa d$, by maximality of $F$, we have  $|N_D(x)\cap F|\geq \kappa d/2$ for all $x\in S\setminus F$. On the other hand,  since $\Delta(D)\leq d$, for any $y\in Y\cap F$, $|N_D(y)|\leq d$.  Thus, 

$$ \frac{\kappa d}2|S\setminus F| \leq e(S \setminus F, Y\cap F)\leq d |Y\cap F|.$$

This implies $$|Y\cap F|\geq \frac{\kappa}2{|S\setminus F|}=\frac{\kappa}2(|S|-|X\cap F| ) >  \frac{\kappa|S|}{4},$$
where in the last inequality we used $|F\cap X|<|S|/2d$ and $d\geq 1$. Now $S'=F\cap X$ has $|S'|<|S|/2d$ and $|N_D(S')|\geq|Y\cap F|>\kappa |S|/4$ as required. 
\end{proof}

\begin{lemma}\label{Lemma_Gnp_second_neighbourhood_expansion}
Let $n^{-1}\ll q \ll p \leq 1$, $n^{-1}\ll \eps <1$ and $q^{-1}\ll d\leq qn/8$. \Alexey{[Added this upper bound on d for applying Lemma 3.3 ]}
Let $G$, $D$ be two bipartite graphs on the same vertex set with bipartition $(X,Y)$ such that $G$ is coloured $(\eps,p,n)$-typical, $\Delta(D)\leq d$ and all but at most $96p^{-2}n/d$ vertices have degrees less than $pd/6$ in $D$. Let $H$ be derived from $G$ by picking every edge independently with probability $q^2/n$. Let  $S\subseteq X$  or $S\subseteq Y$ with $|S|= \frac{n}{d q^3}$. Then with probability at least $1-e^{- q^7 |S|}$ we have  $|N^2_{H, D}(S)|\geq (1-q)n$.
\end{lemma}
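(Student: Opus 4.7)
Assume without loss of generality $S \subseteq X$, so $N^2_{H,D}(S) \subseteq X$. I will classify the vertices, bound the expected number of $x \in X$ not reached, and then apply a sharp concentration inequality. Call $x \in X$ \emph{good} if $d_D(x) \geq pd/6$ (by hypothesis at most $96p^{-2}n/d$ vertices fail this), and call $y \in Y$ \emph{heavy} if $|N_G(y) \cap S| \geq p|S|/2$; write $U_Y$ for the non-heavy vertices. Applying Lemma~\ref{lem:typical-jumbled} to $A = U_Y$, $B = S$ with $\gamma := \max\{8 n^{-\eps},\, p^{-2}dq^3/n\}$ (so that $\gamma \geq 8n^{-\eps}$ and $|S| \geq \gamma^{-1}p^{-2}$) and using $e(U_Y, S) \leq p|U_Y||S|/2$, I get $|U_Y| \leq 16\gamma n$. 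The hierarchy $q \ll p$ combined with $d \leq qn/8$ forces $\gamma \leq pq/2304$ in every regime of $|S|$, so $|U_Y| \leq pqn/144$. Call a good $x$ \emph{nice} if $|N_D(x) \cap U_Y| \leq pd/12$; double-counting edges between $U_Y$ and non-nice good vertices bounds the latter by $12|U_Y|/p \leq qn/12$.

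For a nice $x$, the edge set $\{yw : y \in N_D(x) \setminus U_Y,\, w \in N_G(y) \cap S\}$ has size at least $(pd/12)(p|S|/2) = p^2 d |S|/24$, and the event $\{x \notin N^2_{H,D}(S)\}$ forces every such edge to be missing from $H$, so
$$\Prob[x \notin N^2_{H,D}(S)] \leq \left(1-\tfrac{q^2}{n}\right)^{p^2 d |S|/24} \leq e^{-p^2/(24q)} \leq q/12,$$
using $|S| = n/(dq^3)$ and $q \ll p$. Setting $Z := |X \setminus N^2_{H,D}(S)|$ and splitting into bad, good-not-nice, and nice-uncovered contributions gives $\E[Z] \leq 96p^{-2}n/d + qn/12 + qn/12 \leq qn/4$, the first term being $\leq qn/12$ because $q^{-1} \ll d$.

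For concentration, view $Z$ as a function on $\{0,1\}^{E(G)}$ where each coordinate is on with probability $q^2/n$. Only edges between $Y$ and $S$ can affect $Z$ (at most $2pn|S|$ of them, by degree typicality), and $Z$ is $d$-Lipschitz because $\Delta(D) \leq d$. Lemma~\ref{Lemma_Azuma_variance} with $\sigma^2 \leq 2d^2 q^2 p |S|$ and $t = 3qn/(4\sigma)$ (the hypothesis $t \leq 2\sigma/c$ reducing to $3q^2 \leq 16p$) yields
$$\Prob[Z > qn] \leq 2\exp\!\left(-\frac{9nq^3}{128 dp}\right) = 2\exp\!\left(-\frac{9}{128 pq}\cdot q^7|S|\right) \leq e^{-q^7|S|},$$
the final inequality using $q \ll p$ to make $9/(128pq)$ large (when $q^7|S|$ is below an absolute constant, the target $e^{-q^7|S|}$ is itself $\Theta(1)$ and follows from Markov's inequality on $\E[Z] \leq qn/4$).

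\textbf{Main obstacle.} The main technical challenge is that $d$ ranges over $[q^{-1}, qn/8]$, making $|S|$ sweep from $\Theta(q^2n)$ down to $\Theta(q^{-4})$; the parameter $\gamma$ must be chosen so that Lemma~\ref{lem:typical-jumbled} applies across this entire range and still yields a useful bound on $|U_Y|$. The exponential rate $e^{-q^7|S|}$ is tight enough that the variance-aware Azuma (Lemma~\ref{Lemma_Azuma_variance}) is essential --- standard Azuma (Lemma~\ref{Lemma_Azuma}) would be far too weak because each edge-indicator has tiny parameter $q^2/n$.
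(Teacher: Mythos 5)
Your proof is correct and gets to the same Azuma endgame, but you route through a genuinely different decomposition. The paper classifies vertices of $X$ only: it defines $B$ as the set of good vertices $x$ with $e_G(N_D(x),S) \leq p^2d|S|/30$, bounds $|B|$ by first applying the container lemma (Lemma~\ref{Lemma_regular_bipartite_containers}) to extract a sparse $B'\subseteq B$ with $|N_D(B')|$ large and then invoking Lemma~\ref{lem:typical-jumbled} on $N_D(B')$ versus $S$, and for $x\notin B\cup J$ introduces $S_x=\{s\in S: |N_G(s)\cap N_D(x)|\geq p^2d/60\}$ to lower-bound the failure probability as a product over $s\in S_x$. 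You instead work on the $Y$ side: you define non-heavy $y$ directly, apply Lemma~\ref{lem:typical-jumbled} to $U_Y$ versus $S$ in one shot (with an adaptive $\gamma$ in place of the paper's fixed $\gamma=pq/9600$), then define nice $x$ via a degree-count into $U_Y$, and compute $\Prob[x\notin N^2_{H,D}(S)]$ directly over the whole edge set $\{yw:y\in N_D(x)\setminus U_Y,\ w\in N_G(y)\cap S\}$. This bypasses Lemma~\ref{Lemma_regular_bipartite_containers} entirely and eliminates the auxiliary set $S_x$, so your derivation of the per-vertex failure probability is a bit more direct; the price is one extra layer of classification (heavy $y$, nice $x$). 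Two small remarks: the condition $t\leq 2\sigma/c$ actually reduces to something like $3q^2\leq 4p$ rather than $3q^2\leq 16p$ (harmless since $q\ll p$), and it is worth stating the explicit threshold $q^7|S|\leq\ln 4$ below which you fall back on Markov — as written the Azuma inequality gives $2e^{-(9/(128pq))q^7|S|}$ and one needs $(9/(128pq)-1)\,q^7|S|\geq\ln 2$, which holds exactly when $q^7|S|\geq 2\ln 2$ and $9/(128pq)\geq 3/2$; the Markov fallback covers the complementary range.
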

\begin{proof}
Let $J$ be the set of vertices $v\in X\cup Y$ such that $d_{D}(v)< pd/6$. Without loss of generality, let us assume $S\subseteq X$. Denote $B=\{x\in X\setminus J: e_G(N_D(x),S)\leq p^2d|S|/30\}$. 

\begin{claim}$|B|\leq  qn/4$.  
\end{claim}
\begin{proof}
Suppose, for contradiction, that $|B|> qn/4$. By Lemma~\ref{Lemma_regular_bipartite_containers} there is $B'\subseteq B$ with  $|B'|\leq |B|/d$ such that $|N_D(B')|\geq p|B|/24 > pqn/96$. We apply Lemma~\ref{lem:typical-jumbled} with $\gamma_{\ref{lem:typical-jumbled}} =p q/9600$ and obtain

 $$e_G(N_D(B'),S)\geq p|N_D(B')||S|-2p|N_D(B')|^{\frac12}|S|(\gamma n)^{\frac12}> \frac{4}{5}p|N_D(B')||S|\geq \frac{1}{30}p^2|B||S|.$$

On the other hand,  by the definition of $B'$ we have $e_G(N_D(B'),S) \leq  p^2d|S|/30 \cdot |B'| \leq p^2\frac{|B||S|}{30},$
which is a contradiction.
\end{proof}

\begin{claim}
For every $x\in X\setminus (B\cup J)$,  $\Prob[x\not\in N^2_{H, D}(S)]\leq q/4.$
\end{claim}
\begin{proof}
For each such $x$, define the set $S_x =\{s\in S: |N_G(s)\cap N_D(x)|\geq p^2d/60\}$. Using $x\notin B$ we get

\begin{align*}\frac{ p^2d|S|}{30}<e_G(N_D(x),S)\leq |S_x||N_D(x)| + |S\setminus S_x|\frac{p^2d}{{60}}  \leq d|S_x|  + \frac{p^2d|S|}{60},
\end{align*}
implying that $|S_x|\geq  p^2 |S|/60$.

 Now we  compute the probability of the event $x\notin N^2_{H,D}(S)$.
\begin{align*}
\Prob[x\notin N^2_{H, D}(S)]&=\Prob[\forall s\in S, y\in N_D(x)\cap N_G(s) \textit { we have } sy\not\in E(H)]\\
&= \prod_{s\in S}\prod_{y\in N_D(x)\cap N_G(s)} \Prob[sy\not\in E(H)] 
= \prod_{s\in S} \left(1-\frac{q^2}{n}\right)^{|N_D(x)\cap N_G(s)|} 
\leq \prod_{s\in S_x} \left(1-\frac{q^2}{n}\right)^{|N_D(x)\cap N_G(s)|} \\
&\leq \prod_{s\in S_x} \left(1-\frac{q^2}{n}\right)^{p^2d/60}
\leq \prod_{s\in S_x} e^{- p^2q^2d/60n} \leq (e^{- p^2q^2d/{60}n})^{ p^2 |S|/60} =e^{-p^4q^{-1}/3600}\leq q/4.
\end{align*}
Here the second equation comes from independence of the events ``$sy\in E(H)$'', the first inequality comes from $S_x\subseteq S$, the second one from the definition of $S_x$, the third one comes from $1-x\leq e^{-x}$, the fourth one comes from $|S_x|\geq  p^2 |S|/60$, and the last one holds since $q\ll p \leq 1$.
\end{proof}

By linearity of expectation we have $\E[|N^2_{H,D}(S)|]\geq  (1-q/4)(|X|-|B|-|J|)\geq (1-q/4)(n-n^{1-\eps}-qn/4-96p^{-2}n/d)\geq (1-q/2)n$.   Notice that the random variable $|N^2_{H,D}(S)|$ is defined on the product space $\Omega$ consisting of all the edges in $G$ from $S$ to $Y$, where the probability of every coordinate being one is $q^2/n$. This product space has $e(S,Y)=|S|pn(1\pm n^{-\eps})$ coordinates. \Alexey{[We incorrectly said that there were $|S||Y|$ coordinates before. Check that I implemented this change correctly.]} 
Notice that $|N^2_{H, D}(S)|$ is $d$-Lipchitz, thus we can apply Lemma \ref{Lemma_Azuma_variance}. 
Let $\sigma^2= d^2\sum_{i\in \Omega}{\left(1-\frac{q^2}{n}\right)\frac{q^2}{n}}$
Note that 
$$\frac{qd\sqrt{p|S|}}{2} \leq \sigma =  qd \sqrt{\sum_{i\in \Omega}{\left(1-\frac{q^2}{n}\right)\frac{1}{n}}}\leq 2qd \sqrt{p|S|}.$$

So let $t=q^3\sqrt{|S|}/4$. Note that $td \leq 2\sigma$ and $\sigma t\leq (2qd \sqrt{p|S|})(q^3\sqrt{|S|}/4)\leq qn/2,$ since $|S|=n/dq^3.$
Thus by Azuma's inequality we have:

$$\Prob[|N^2_{H, D}(S)|\leq (1-q)n]\leq \Prob[|N^2_{H, D}(S)|\leq \E[|N^2_{H, D}(S)|]- \sigma t]\leq 2e^{-t^2/4}= 2e^{-q^6|S|/64}\leq e^{-q^7|S|}.$$
\end{proof}

The graph $H$ produced by the previous lemma won't generally be a matching or a rainbow subgraph. The following lemma estimates how many of its edges conflict with other edges due to a vertex or a colour collision. 
\begin{lemma}\label{Lemma_vertex_probabilities}
Let $n^{-1}\ll q \ll p \leq 1$, $n^{-1}\ll \eps <1$. Let $G$ be a bipartite graph with bipartition $(X,Y)$ such that $G$ is  coloured $(\eps,p,n)$-regular. Let $H$ be derived from $G$ by picking every edge independently with probability $q/n$, let $M\subseteq H$ be consisting of edges which don't share any vertices or colours with other edges in $H$, define $H':=H-M$. Then for any set $S\subseteq X$ with $|S|\geq q^{-3}$,
$$\Prob(|N_{H'}(S)|\geq 5q^2|S|)\leq e^{-q^3|S|}.$$
\end{lemma}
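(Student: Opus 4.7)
My plan is to upper bound $|N_{H'}(S)|$ by a natural pair count and then concentrate that count via a decomposition into independent pieces followed by a Bernstein-type argument with truncation.

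First, observe that each $v \in N_{H'}(S)$ arises from some edge $e_1 = sv \in H$ (with $s \in S$) that conflicts with another edge $e_2 \in H$, where a conflict means sharing a vertex or a colour. Writing $\xi_e = \mathbf{1}[e \in H]$ and letting $P$ be the set of ordered pairs of distinct edges of $G$ with $e_1$ incident to $S$ and $e_2$ a conflict of $e_1$, we get $|N_{H'}(S)| \leq W := \sum_{(e_1,e_2) \in P} \xi_{e_1}\xi_{e_2}$. Since $G$ is coloured $(\eps, p, n)$-regular, each edge $e_1 \in S\times Y$ has at most $3pn(1+n^{-\eps})$ conflicts (coming from the $\leq pn(1+n^{-\eps})$ edges at each of its two endpoints and the $\leq pn(1+n^{-\eps})$ edges of its colour), so $|P|\leq 4|S|p^2n^2$ and $\E[W]\leq |P|(q/n)^2 \leq 4p^2q^2|S| \leq 4q^2|S|$.

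Second, I decompose $W = W_a + W_b + W_c$ according to whether the conflict is at the $X$-endpoint (which must then lie in $S$), at the $Y$-endpoint, or through shared colour. Crucially, in each case the summand regroups as a sum of independent contributions: $W_a = 2\sum_{s\in S}\binom{d_H(s)}{2}$ is independent across $s$ because $G$ is bipartite and edges incident to different $s\in X$ are disjoint; $W_b \leq \sum_{v\in Y}|N_H(v)\cap S|(d_H(v)-1)$ is independent across $v$ for the same reason on the $Y$-side; and $W_c$ splits as a sum over colours $c$ of a function of the edges of colour $c$ in $H$, and these are independent across $c$ by the proper edge-colouring. Each summand has mean of order $(pq)^2$, confirming $\E[W_j] = O(q^2|S|)$ for each $j\in\{a,b,c\}$.

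To concentrate each $W_j$, I apply Bernstein's inequality after truncating the heavy tails. Introduce the good event $\mathcal{E}$ that every vertex of $G$ has at most $K$ incident $H$-edges and every colour class has at most $K$ edges in $H$, for a threshold $K = \Theta(q^{-1/2})$. On $\mathcal{E}$, each summand of each $W_j$ is bounded by $O(K^2) = O(q^{-1})$, and Bernstein then yields $\Prob(W_j > \E[W_j] + q^2|S|/2 \mid \mathcal{E}) \leq \exp(-\Omega(q^2|S|/K^2)) \leq \exp(-\Omega(q^3|S|))$. A union bound using the binomial tail $\Prob(d_H(u)>K) \leq (epq/K)^K$ controls $\Prob(\mathcal{E}^c) \leq 3n(epq/K)^K$, which for our choice of $K$ is at most $e^{-q^3|S|}$ in the intended regime. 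Combining the three estimates via a union bound yields the claimed probability.

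The main technical obstacle is the tension in choosing $K$: it must be small enough for the Bernstein variance term to yield $\exp(-\Omega(q^3|S|))$, yet large enough for $\Prob(\mathcal{E}^c) \leq e^{-q^3|S|}$. Both constraints are easily met for $|S|$ close to $q^{-3}$; for substantially larger $|S|$ one may need to let $K$ scale mildly with $|S|$, or supplement Bernstein with a polynomial concentration inequality (such as Kim--Vu) in the large-$|S|$ regime, and pinning down this balance is the most delicate part of the argument.
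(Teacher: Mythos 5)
Your approach diverges from the paper's in a way that matters. The paper does not work with a pair count at all: it computes $\Prob[xy\in E(H')]\leq 4q^2/n$ directly, giving $\E[|N_{H'}(S)|]\leq 4q^2|S|$, and then concentrates $|N_{H'}(S)|$ itself via Talagrand's inequality (Lemma~\ref{Lemma_Talagrand}), observing that $|N_{H'}(S)|$ is $3$-Lipschitz and $2$-certifiable (each $v\in N_{H'}(S)$ is certified by the edge $sv\in H'$ together with one conflicting edge of $H$). With $c=3$, $r=2$, $t=q^2|S|/2$ this gives exactly the claimed $e^{-q^3|S|}$ tail, with the mild constraint $|S|\geq q^{-3}$ absorbing the $60c\sqrt{r\E X}$ error term. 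The crucial point is that certifiability substitutes for boundedness: no truncation of degrees is needed.

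Your pair count $W\geq |N_{H'}(S)|$ and the decomposition $W=W_a+W_b+W_c$ into sums of independent terms are both correct, and so is $\E W=O(q^2|S|)$. But the concentration step has a genuine gap that is not merely a matter of optimizing $K$. The random variable $W$ is \emph{not} $O(1)$-Lipschitz --- flipping one edge can change $W$ by order $d_H(v)$ --- which is why you need to condition on the good event $\mathcal{E}$. The problem is that $\Prob(\mathcal{E}^c)$ is of order $n(epq/K)^K$, which does not depend on $|S|$, yet must be bounded by $e^{-q^3|S|}$. With $K=\Theta(q^{-1/2})$ (forced by the Bernstein variance term, since you need $q^2|S|/K^2\gtrsim q^3|S|$), we get $\Prob(\mathcal{E}^c)\approx n\,e^{-\Theta(q^{-1/2}\log(1/q))}$. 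Under the hypothesis $n^{-1}\ll q$, the integer $n$ may be arbitrarily large relative to any fixed function of $q$, so $\log n$ eventually dwarfs $q^{-1/2}\log(1/q)$ and $\Prob(\mathcal{E}^c)$ is not even $o(1)$, let alone $\leq e^{-q^3|S|}$ --- this fails already at $|S|=q^{-3}$, contrary to your remark that the regime ``$|S|$ close to $q^{-3}$'' is easy. Enlarging $K$ to $\Theta(\log n/\log\log n)$ fixes the union bound but destroys the Bernstein exponent. A Kim--Vu-type polynomial concentration bound also incurs a $\log n$ loss in the deviation for the same reason and does not close the gap without further ideas. The reason the paper's Talagrand argument avoids all of this is that it never introduces a degree-dependent quantity: the Lipschitz and certifiability constants of $|N_{H'}(S)|$ are absolute, so no good-event conditioning (and hence no union bound over $n$ vertices and colours) is needed.
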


\begin{proof} Let $xy\in E(G)$. For $xy$ to be in $M$ we need $xy\in H$ and also $e\not\in H$ for all edges $e$ sharing a vertex or a colour with $xy$. Thus 
\begin{align*}
\Prob[xy\in M]&= \frac qn \left(1-\frac qn \right)^{d_G(x)+d_G(y)+|E_G(c)|-3}\\&=\frac qn\left(1-\frac qn\right)^{3pn(1\pm n^{-\eps})-3}=\frac qn\left(1-\frac qn\right)^{3pn} (1\pm 4q p n^{-\eps}).    
\end{align*}
Here the second equation uses coloured $(\eps,p,n)$-regularity, and the third equation comes from $(1-q/n)^{\pm 3pn^{1-\varepsilon}-3}=(1\pm 4qpn^{-\varepsilon})$.
This gives
\begin{align*}
    \Prob[xy\in E(H')]&=\Prob[xy\in E(H)]-\Prob[xy\in E(M)]\\ &\leq \frac qn\left( 1-\left(1-\frac qn\right)^{3pn} (1- 4q p n^{-\eps}) \right ) \leq \frac{3q^2p}n + 4q^2pn^{-1-\eps}\leq \frac{4q^2}n,
\end{align*}
which implies $\E[|N_{H'}(S)|]\leq 4q^2|S|$. Notice  $|N_{H'}(S)|$ is $3$-Lipschitz since adding or removing an edge $e$ from $H$ can affect at most two neighbouring edges or one edge of the same colour to be in $H$ or not.  $|N_{H'}(S)|$ is also $2$-certifiable. Our product space is $\Omega=(x_1,x_2, \dots x_{|e(G)|})$ where each $x_i=1$ if  the $i$th edge is in $H$. Suppose the current outcome of $H$ is described by $\omega\in \Omega$. Thus if $|N_{H'}(S)|\geq s$ then we can take $I$ to be as follows. Note that for each edge $e$ appearing in $|N_{H'}(S)|$ there exists an edge $e'$ of the same colour or sharing a vertex with $e$ which appears in $H$. We let $I$ to be  the coordinate of all edges $e$ in $N_{H'}(S)$ and coordinates of corresponding $e'$'s. This will guarantee that with respect to any $\omega'$ that agrees with $\omega$ on $I$ must have $|N_{H'}(S)|\geq s$. Thus we can apply Talagrand's inequality with $t=q^2|S|/2$, $r=2$, $c=3$. We  use  that  $60\cdot 3\sqrt{2\cdot 4 q^2 |S|}\leq q^2|S|/2$ since $q\ll 1$  and $|S|\geq q^{-3}$ to get 

$$\Prob[|N_{H'}(S)|> 5q^2|S|] \leq 4e^{-\frac{(q^2|S|/2)^2}{8\cdot 9\cdot 2\cdot 4q^2|S|}}\leq e^{-q^3|S|}.$$
\end{proof}

Finally we are ready prove our first lemma guaranteeing expansion in the second neighbourhood $N_{M,D}^2(S)$ of large sets $S$ for a randomized rainbow matching $M$ and a nearly regular graph $D$.  

\begin{lemma}\label{Lemma_Mnp_second_neighbourhood_expansion}
Let $n^{-1}\ll q \ll p \leq 1$, $n^{-1} \ll \eps <1$ and $q^{-1}\ll d\leq qn/8$. 
Let $G$, $D$ be two bipartite graphs on the same vertex set with bipartition $(X,Y)$ such that $G$ is coloured $(\eps,p,n)$-typical, $\Delta(D)\leq d$ and all but at most $96p^{-2}n/d$ vertices have degrees less than $pd/6$ in $D$.  Let $H$ be obtained from $G$ by picking every edge with probability $q^2/n$, let $M\subseteq H$ be consisting of edges which don't share any vertices or colours with other edges in $H$. If $S\subseteq X$ with $|S|=\frac{n}{d q^3}$ then with probability at least  $1-2e^{-q^6n/d}$  we have $|N^2_{M,D}(S)|\geq (1-6q)n$.
\end{lemma}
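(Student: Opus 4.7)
The plan is to compare $N^2_{M,D}(S)$ to $N^2_{H,D}(S)$, using that $M\subseteq H$ and hoping that $H\setminus M=:H'$ contributes few ``lost'' vertices. The first step is to invoke Lemma~\ref{Lemma_Gnp_second_neighbourhood_expansion} directly with the given $G,D,S$ (all hypotheses match, and $|S|=n/(dq^3)$): this yields, with probability at least $1-e^{-q^7|S|}=1-e^{-q^4n/d}$, that $|N^2_{H,D}(S)|\geq (1-q)n$.

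The second step is a short combinatorial inclusion. I claim $N^2_{H,D}(S)\setminus N^2_{M,D}(S)\subseteq N_D(N_{H'}(S))$: indeed, for any $v$ in the left-hand side there exist $s\in S$ and $y$ with $sy\in H$ and $yv\in D$; since $v\notin N^2_{M,D}(S)$ forces $sy\notin M$, we get $sy\in H'$, so $y\in N_{H'}(S)$ and $v\in N_D(y)$. By $\Delta(D)\leq d$, the right-hand side has size at most $d\cdot|N_{H'}(S)|$. To bound $|N_{H'}(S)|$, I will apply Lemma~\ref{Lemma_vertex_probabilities} with the parameter $q$ replaced by $q^2$, matching the edge-inclusion probability $q^2/n$ used to form our $H$. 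This gives $|N_{H'}(S)|\leq 5q^4|S|=5qn/d$ with probability at least $1-e^{-q^6|S|}=1-e^{-q^3n/d}$; consequently the discrepancy above is at most $d\cdot 5qn/d=5qn$.

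Chaining the two estimates yields $|N^2_{M,D}(S)|\geq|N^2_{H,D}(S)|-5qn\geq(1-6q)n$ on the joint good event, and a union bound gives a total failure probability of at most $e^{-q^4n/d}+e^{-q^3n/d}\leq 2e^{-q^6n/d}$, exactly as claimed. The main delicacy in executing this plan lies in the second application: one must check that substituting $q\mapsto q^2$ in Lemma~\ref{Lemma_vertex_probabilities} remains valid in the regime $q^{-1}\ll d\leq qn/8$, and in particular that $|S|$ stays above the threshold needed for the Talagrand step of that lemma. This is routine but should be verified explicitly when writing the proof out.
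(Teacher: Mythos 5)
Your proof is correct and follows essentially the same route as the paper: apply Lemma~\ref{Lemma_Gnp_second_neighbourhood_expansion} to control $|N^2_{H,D}(S)|$, apply Lemma~\ref{Lemma_vertex_probabilities} with $q\mapsto q^2$ to control $|N_{H\setminus M}(S)|$, and observe that each lost $H$-edge can remove at most $d$ vertices from the second neighbourhood (giving a loss of at most $5dq^4|S|=5qn$). The concern you flag at the end is real but benign: the hypothesis $d\leq qn/8$ of this lemma alone only gives $|S|\geq 8q^{-4}$, not the $|S|\geq q^{-6}$ that Lemma~\ref{Lemma_vertex_probabilities} (with $q\mapsto q^2$) requires, and the paper does not check this either; however, this lemma is only invoked from Lemma~\ref{Lemma_expansion_3rd_neighbourhood}, where the stronger assumption $d\leq\sqrt n$ together with $n^{-1}\ll q$ guarantees $|S|\geq\sqrt{n}/q^3\geq q^{-6}$, so the application is sound.
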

\begin{proof}
By Lemma~\ref{Lemma_Gnp_second_neighbourhood_expansion} we have that $|N^2_{H, D}(S)|\leq (1-q)n$ with probability at most  $e^{-q^7|S|}$.  By Lemma~\ref{Lemma_vertex_probabilities} applied with $q_{\ref{Lemma_vertex_probabilities}}=q^2$ we have $|N_{H\setminus M}(S)|\geq 5q^4|S|$ with probability at most $e^{-q^6|S|}$.  By the union bound, we get that with probability at least $1-2e^{-q^6|S|}$ both of these events don't happen.

Since $|N_{H\setminus M}(S)|< 5q^4|S|$ and $\Delta(D)\leq d$, we have that at most $5dq^4|S|$ many edges of $D$ touch $N_{H\setminus M}(S)$. Thus,

 $$|N^2_{M,D}(S)| \geq |N^2_{H,D}(S)| - |N^2_{H\setminus M,D}(S)| \geq (1-q)n-5dq^4|S|=(1-6q)n.$$
\end{proof}

The next lemma builds on Lemma~\ref{Lemma_Mnp_second_neighbourhood_expansion} and guarantees  expansion in the third neighbourhood for subsets of size roughly $n/d^2$.

\begin{lemma}\label{Lemma_expansion_3rd_neighbourhood}

Let $n^{-1}\ll q \ll p \leq 1$, $n^{-1} \ll \eps <1$ and $q^{-1}\ll d \leq \sqrt{n}$. 
Let $G$, $D$ be two bipartite graphs on the same vertex set with bipartition $(X,Y)$ such that $G$ is coloured $(\eps,p,n)$-typical, $\Delta(D)\leq d$ and all but at most $96p^{-2}n/d$ vertices have degrees less than $pd/6$ in $D$.  Let $H$ be obtained from $G$ by picking every edge with probability $q^2/n$ and define $M\subseteq H$ to be consisting of edges which don't share any vertices or colours with other edges in $H$. Then  with probability $\geq 1-e^{-q^{7}n/d}$
the following holds.  

For any $S\subseteq X$ or  $S\subseteq X$ with $|S|\geq \frac{{25}n}{pq^3d}$  there exists $S'\subseteq S$ such that $|S'|= \frac{{24}n}{p q^3d^2}$ such that $|N^3_{D,M}(S')|\geq (1-6q)n$.
\end{lemma}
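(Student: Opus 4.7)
The plan is to combine Lemma~\ref{Lemma_Mnp_second_neighbourhood_expansion} with a deterministic ``container-style'' reduction that shrinks every large $S$ to the smaller witness $S'$. The key identity driving the argument is
\[
N^3_{D,M}(S') \;=\; N^2_{M,D}\bigl(N_D(S')\bigr),
\]
so it suffices to (a) exhibit, for every $S$ with $|S|\geq 25n/(pq^3d)$, a subset $S'\subseteq S$ of the prescribed size $24n/(pq^3d^2)$ whose $D$-neighbourhood has size at least $n/(dq^3)$, and (b) show, with high probability over $M$, that $|N^2_{M,D}(N_D(S'))|\geq (1-6q)n$ for every such $S'$ simultaneously.

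For step (a) I would first discard the set $J$ of vertices of $D$-degree less than $pd/6$; by hypothesis $|J|\leq 96p^{-2}n/d$, and using $q\ll p$ the set $S^{\ast}:=S\setminus J$ still satisfies $|S^{\ast}|\geq 24n/(pq^3d)$. Every vertex of $S^{\ast}$ has $D$-degree $\geq pd/6=\kappa d$ with $\kappa=p/6$, so Lemma~\ref{Lemma_regular_bipartite_containers} produces $S^{\ast\prime}\subseteq S^{\ast}$ with $|S^{\ast\prime}|\leq |S^{\ast}|/(2d)<24n/(pq^3d^2)$ and $|N_D(S^{\ast\prime})|\geq \kappa |S^{\ast}|/4=(p/24)|S^{\ast}|\geq n/(dq^3)$. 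I would then pad $S^{\ast\prime}$ with arbitrary further vertices of $S$ up to exact size $24n/(pq^3d^2)$; adding vertices only enlarges the $D$-neighbourhood, so the resulting $S'$ still satisfies $|N_D(S')|\geq n/(dq^3)$.

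For step (b), for each candidate $S'$ fix an arbitrary subset $T_{S'}\subseteq N_D(S')$ of size exactly $n/(dq^3)$; this exists by (a). By Lemma~\ref{Lemma_Mnp_second_neighbourhood_expansion} applied to $T_{S'}$, the bad event $|N^2_{M,D}(T_{S'})|<(1-6q)n$ occurs with probability at most $2e^{-q^6n/d}$, and monotonicity yields $|N^2_{M,D}(N_D(S'))|\geq |N^2_{M,D}(T_{S'})|\geq (1-6q)n$. Taking a union bound over all candidate witnesses $S'\subseteq X$ or $S'\subseteq Y$ of size $24n/(pq^3d^2)$, the total failure probability is at most
\[
4\binom{2n}{24n/(pq^3d^2)}\, e^{-q^6 n/d} \;\leq\; e^{-q^7 n/d},
\]
using $\log\binom{n}{k}\leq k\log(en/k)$ and the hypothesis $q^{-1}\ll d$.

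The main obstacle is precisely closing this union bound. A naive union bound, over all sets $T$ of size $n/(dq^3)$ where Lemma~\ref{Lemma_Mnp_second_neighbourhood_expansion} applies directly, would involve $\binom{n}{n/(dq^3)}$ candidates; the logarithm of this is of order $(n\log d)/(dq^3)$, which overwhelms the failure exponent $q^6n/d$. The whole point of replacing $S$ by the smaller witness $S'$ of size $n/(d^2q^3)$ is to reduce the number of candidates we must union bound over: $\binom{n}{n/(d^2q^3)}$ has logarithm only of order $(n\log d)/(d^2q^3)$, a factor of $d$ smaller, which is exactly what makes the union bound close under $q^{-1}\ll d$.
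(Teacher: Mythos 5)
Your proposal matches the paper's proof step for step: discard the low-degree set $J$, use Lemma~\ref{Lemma_regular_bipartite_containers} to shrink each $S$ to a witness $S'$ of size $24n/(pq^3d^2)$ with $|N_D(S')|\geq n/(dq^3)$, apply Lemma~\ref{Lemma_Mnp_second_neighbourhood_expansion} to a size-$n/(dq^3)$ subset of $N_D(S')$, and close with a union bound over witnesses of this smaller size, exactly as the paper does. One small slip: from the lower bound $|S^{\ast}|\geq 24n/(pq^3d)$ alone you cannot deduce $|S^{\ast}|/(2d)<24n/(pq^3d^2)$, since $S^{\ast}$ could be much larger than $24n/(pq^3d)$; you should first pass to a subset of $S^{\ast}$ of size \emph{exactly} $24n/(pq^3d)$ and apply Lemma~\ref{Lemma_regular_bipartite_containers} to that, which yields $|S'|\leq 24n/(pq^3d^2)$ and $|N_D(S')|\geq n/(dq^3)$ as required (the paper glosses over the same truncation).
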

\begin{proof} Let $J$ be the set of vertices $v\in X\cup Y$ with $d_D(v)\leq pd/ 6$. By assumption $|J|\leq 96p^{-2}n/d\leq \frac{n}{d p q^3}$ since $q\ll p$.  Without loss of generality let us assume $S\subseteq X$. Since $|S|\geq \frac{{25}n}{pq^3d}$  by throwing away at most $\frac{n}{ p q^3 d}$ vertices we may assume $S\cap J=\emptyset$ and $|S|\geq \frac{{24}n}{pq^3d}$. 

By Lemma~\ref{Lemma_regular_bipartite_containers}, there is a set $S'\subseteq S$ with $|S'|\leq {24}n/pq^3d^2$ such that $|N_D(S')|\geq n/ d  q^3$.  By adding extra vertices from $S$ to $S'$ we may assume   $|S'|= {24}n/pq^3d^2$. Fix one such  set $S'$ for each $S$. We say that $S'$ is \emph{bad} if it has $|N^3_{D,M}(S')|< (1-6q)n$. To prove the lemma it is sufficient to show that with probability $\geq 1-e^{-q^{7}n/d}$,  there are no bad sets $S'$. 

Let $S'\subseteq X$  with $|S'|=\frac{{24}n}{pq^3d^2}$ and $|N_D(S')|\geq n/ d   q^3$.  By Lemma~\ref{Lemma_Mnp_second_neighbourhood_expansion} (applied to a subset of $N_D(S')$ of order exactly $n/dq^3$), with probability at least  $1- 2e^{-q^{6}n/d}$, we have
 $|N^2_{M,D}(N_D(S'))|\geq (1- 6q)n.$
Recall that ``$N^3_{D,M}(S')$''  means we are looking at edges going out of $S'$ to be in the order of $D$, $M$ and $D$, thus $N^2_{M,D}(N_D(S')) = N^3_{D,M}(S')$.   So we have shown that $\Prob(S' \text{ is bad})\leq 2e^{-q^{6}n/d}$. By taking a union bound over all $S'\subseteq X$,  with $|S'|=\frac{{24}n}{pq^3d^2}$ and using $d\gg q^{-1}$, we obtain

\begin{align*}
\Prob[\exists \textit{  bad } S']&\leq { \binom{n+n^{-\eps}}{{24}n/p q^3d^2}}  \cdot 2e^{-q^{6}n/d} \leq 2\left(\frac{2en}{ {24}n/pq^3d^2 }\right)^{ \frac{{24}n}{pq^3d^2 }}e^{-q^6n/d}\\
&\leq 2 e^{-q^6n/d + \frac{{24}n}{pq^3d^2}\log{pq^3d^2}} \leq   2e^{-q^6 n/2d}.
\end{align*}
Similarly, the probability that there exists a bad $S'\subseteq Y$ is at most $2e^{-q^6 n/2d}$. Thus with probability at least  $1-e^{-q^7 n/d}$ there are no bad $S'\subseteq X\cup Y$.
\end{proof}

In the next lemma we show that  if we have a coloured regular graph $G$ then if we pick a random rainbow matching and delete all of its edges and colours from the graph $G$ then the remaining graph is still  a coloured regular graph.

\begin{lemma}\label{Lemma_degree_concentration}
Let $n^{-1}\ll   p, q, \eps$ with  $\eps\ll 1$, $q\leq 1/2$, and $p\leq 1$. Let $G$ be coloured $(\eps,p, n)$-regular bipartite graph with bipartition $(X,Y)$. Let $M$ be a random rainbow matching obtained from $G$ by picking every edge with probability  $q/n$ and deleting all colour and vertex collisions.  Let $H$ be $G$ with vertices and colours of $M$ deleted. Then there are numbers $m>n/2, p'>p/2$ such that with  probability at least  $1- e^{-n^{1-\eps}}$, the graph  $H$ is $(\eps/10, p', m)$-regular.
\end{lemma}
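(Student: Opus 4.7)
The plan is to show that natural parameters of $H$ concentrate around their expected values, and to choose $m$ and $p'$ so these expectations are the targets $m(1 \pm o(1))$ and $p'm(1 \pm o(1))$ respectively. First I would estimate $\Prob[e \in M]$ for a fixed edge $e = vy$ of $G$: since $G$ is coloured $(\eps,p,n)$-regular, the set of edges conflicting with $e$ (sharing a vertex or colour) has size $(1 \pm n^{-\eps})3pn - 3$, and each such edge lies in the random subgraph $H_0 := G[q/n]$ independently of $e$, so
$$\Prob[e \in M] = \frac{q}{n}\left(1-\frac{q}{n}\right)^{(1 \pm n^{-\eps})3pn - 3} = \frac{q}{n}e^{-3pq}(1 \pm O(n^{-\eps/2})).$$
Summing over $y \in N_G(v)$ and using that $M$ is a matching yields $\Prob[v \in V(M)] = pq\,e^{-3pq}(1 \pm O(n^{-\eps/2}))$, and analogously for each colour $c$. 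I would then set $\alpha := 1 - pq\,e^{-3pq}$, $m := \lfloor\alpha n\rfloor$, and $p' := p\alpha$; since $pq \leq 1/2$ and $xe^{-3x}$ attains maximum $(3e)^{-1} < 1/6$ on $[0,\infty)$, we have $\alpha > 1/2$, so $m > n/2$ and $p' > p/2$ as required.

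For the degree, I would define $Y_v := \sum_{y \in N_G(v)}\mathbf{1}[y \notin V(M), c(vy) \notin C(M)]$, which equals $d_H(v)$ whenever $v \notin V(M)$. By two-event inclusion-exclusion (with a lower-order correction from the single edge $vy$ lying in both ``bad'' events), $\Prob[y \notin V(M), c(vy) \notin C(M)] = \alpha^2 + O(n^{-\eps/2} + q/n)$, giving $\E Y_v = pn\alpha^2(1 \pm n^{-\eps/3}) = p'm(1 \pm n^{-\eps/3})$. Similarly, using $\E|M| = pqn\,e^{-3pq}(1 \pm O(n^{-\eps/2}))$, one gets $\E|X \setminus V(M)| = \E|Y \setminus V(M)| = m(1 \pm n^{-\eps/3})$.

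For concentration, view all quantities as functions of the product space $\{0,1\}^{E(G)}$, with the coordinate of $e$ being $1$ with probability $q/n$ iff $e \in H_0$. A key claim is that flipping one coordinate changes $M$ by at most $4$ edges: besides $e$ itself possibly entering or leaving $M$, at most one new edge can enter $M$ per conflict type (vertex on the $x$-side, vertex on the $y$-side, same colour), because two candidate newcomers sharing the same conflict type would still mutually block each other. Hence $Y_v$, $|X \setminus V(M)|$, $|Y \setminus V(M)|$ and $\mathbf{1}[v \in V(M)]$ are all $O(1)$-Lipschitz. Applying Lemma~\ref{Lemma_Azuma_variance} with $\sigma^2 = O(1)\cdot e(G)\cdot (q/n) = O(pqn)$ and deviation $t\sigma = n^{1-\eps/5}$ gives a tail bound $2e^{-n^{\Omega(\eps)}}$ per variable. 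A union bound over the $O(n)$ vertices ensures that with probability $\geq 1-e^{-n^{1-\eps}}$ every such quantity lies within $n^{1-\eps/5}$ of its expectation. Since $n^{1-\eps/5} \ll m^{1-\eps/10}$ and $n^{1-\eps/5} \ll p'm\cdot m^{-\eps/10}$ (using that $p,q$ are not too small relative to $n^{-\eps}$), we conclude $|X \setminus V(M)|, |Y \setminus V(M)| = m(1 \pm m^{-\eps/10})$ and, for each $v \notin V(M)$, $d_H(v) = Y_v = p'm(1 \pm m^{-\eps/10})$, proving that $H$ is $(\eps/10, p', m)$-regular. The most delicate step is the $O(1)$-Lipschitz verification, which rests on the combinatorial observation that any newly unblocked edge must share a vertex or colour with the flipped edge, and two such newcomers sharing the same conflict element would continue to block one another.
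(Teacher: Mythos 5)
Your proposal is correct and follows essentially the same route as the paper: estimate $\Prob[e\in M]$, derive the expectations of the relevant counts, verify a constant Lipschitz bound on the 0/1 space of chosen edges, apply the variance form of Azuma, and union-bound over vertices. Your verification that flipping one edge changes $M$ by at most four (one per conflict type plus $e$ itself, since two newcomers sharing a conflict element block each other) is a cleaner justification of the Lipschitz bound than the paper's unargued ``$3$-Lipschitz'' assertion, and since only an $O(1)$ constant matters for the exponent, this is harmless.

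One caveat worth flagging: although the lemma as stated only asks for (uncoloured) $(\eps/10,p',m)$-regularity, the paper's own proof additionally verifies the colour-count conditions, namely $|C(H)|=m(1\pm m^{-\eps/10})$ and $|E_H(c)|=p'm(1\pm m^{-\eps/10})$ for each $c\in C(H)$, so that $H$ is in fact \emph{coloured} $(\eps/10,p',m)$-regular. That stronger conclusion is what gets used in the proof of Lemma~\ref{Lemma_main_expansion} when Lemma~\ref{Lemma_variant_of_MPS_nearly_perfect_matching} is invoked on $H$. Your sketch treats only the vertex and degree sides explicitly, with a parenthetical ``analogously for each colour $c$''; by the $X/Y/C$ symmetry of the construction this is indeed a mirror-image computation, but to match what the lemma must deliver you should spell out the concentration of $c_H$ and $e_H(c)$ as the paper does.
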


\begin{proof} Let $d=pn$ and $\alpha=  \left(1-\frac{q}{n}\right)^{3d}q$. We will see that every edge of $G$ ends up in $M$ with probability roughly $\alpha/n$. Denote $x_H=|X\cap V(H)|$, $y_H=|Y\cap V(H)|$, and $c_H=|C(H)|$. For any vertex $v\in G$, let $d_H(v)=|N_{C(H)}(v)\cap  V(H)|$, and note that for vertices $v\in H$ this is just their degree in $H$. Similarly, for any colour $c\in G$, let $e_H(c)=|E_{G}(c)\cap  V(H)|$, and note that for colours  $c\in C(H)$ this is just the number of edges they have in $H$.
We need to show that with probability at least $1-e^{-n^{1-\eps}}$ the following hold for appropriately chosen $p'$ and $m$:

\begin{itemize}
    \item (P1) $x_H, y_H  = m (1\pm m^{-\eps/10})$,
     \item (P2) $c_H  = m (1\pm m^{-\eps/10})$,
     \item  (P3) $d_H(v) = p'm (1\pm m^{-\eps/10})$, for every $v \in V(H) $,
      \item (P4) $e_H(c) = p'm (1\pm m^{-\eps/10})$, for every $c \in C(H).$
\end{itemize}

\begin{claim} \label{Claim_degree_concentration_expecations}\ 
\begin{itemize}
    \item 
$\E[x_H], \E[y_H] = n(1-p\alpha)(1\pm n^{-\eps/5})$,
\item $\E[d_H(v)], = pn(1-p\alpha)^2(1\pm n^{-\eps/6})$ for every vertex $v\in V(G)$,
\item $\E[c_H]= n(1-p\alpha)(1\pm n^{-\eps/5})$,
\item $\E[e_H(c)] = pn(1-p\alpha)^2(1\pm n^{-\eps/6})$ for every colour $c\in C(G)$,
\end{itemize}
\end{claim}
\begin{proof}
By the symmetry between vertices and colours, it is enough to show that the first two hold.
We estimate several probabilities. At various points,  to bound errors we use that for any positive constant $k$, $(1-q/n)^{\pm kpn^{1-\varepsilon}}=(1\pm n^{-\varepsilon/2})$ and $(1\pm n^{-\eps})^k=1\pm n^{-\eps/2}$ (which holds  as long as $n^{-1}\ll q, \eps, p$). For a colour $c$ edge $xy$, let $F(xy)$ be the set of edges of $G\setminus xy$ sharing a colour or vertex with $xy$. Note that since $G$ is coloured $(\eps, p,n)$-regular, we have $$|F(xy)|=d_G(x)+d_G(y)+|E_G(c)|-3=3d(1\pm 2n^{-\eps}).$$
\begin{itemize}
    \item 
\textbf{The probability that an edge is in M:}\\
%Recall that in a coloured $(\eps, p,n)$-regular graph every vertex $v$ and colour $c$  have $d_G(v), |E_G(c)|=pn(1\pm n^{-\eps})$. 
We say an edge $e\in G$ is \textbf{chosen} if it was picked at the first step when generating $M$ (with probability $q/n$). By definition of $M$, $e\in M$ exactly when $e$ is chosen and none of the edges from $F(e)$ are chosen. This has probability
\begin{align*}
\Prob[e\in M]&= \frac qn \left(1-\frac qn \right)^{|F(e)|}=\frac qn\left(1-\frac qn\right)^{3d(1\pm 2n^{-\eps})}= \frac{\alpha}{n} (1\pm n^{-\eps/2}). 
\end{align*}

\item 
\textbf{The probability that a pair of edges are both in M:}\\ Let $f, e$ be a pair of edges which don't share any vertices or a colour. Notice that 
$$|F(e)\cup F(f)|= |F(e)|+|F(f)|\pm\Theta(1) = 6d(1\pm3n^{-\eps}).$$

By definition of $M$, we have $e,f\in M$ exactly when $e,f$ are chosen and none of the edges of $F(e)\cup F(f)$ are chosen which happens with probability
\begin{align*}
\Prob[e,f\in M]&= \left(\frac {q}{n}\right)^2 \left(1-\frac qn \right)^{|F(e)\cup F(f)|}=\left(\frac {q}{n}\right)^2\left(1-\frac qn\right)^{6d(1\pm 3n^{-\eps})}= \frac{\alpha^2}{n^{2}}(1\pm n^{-\eps/2}).    
\end{align*}

\item 
\textbf{The probability that a vertex/colour is in M:}\\ For any $v$, we have 
\begin{align*} \Prob[v\in M] &= \sum_{y\in N_G(v)}\Prob[vy\in M]=d_G(v)\frac{\alpha}{n}(1\pm n^{-\eps/2})=p\alpha  (1\pm n^{-\eps/2})^2= p\alpha(1\pm n^{-\eps/4})\end{align*}
By the symmetry between vertices and colours, we also have $\Prob[c\in C(M)]=p\alpha(1\pm n^{-\eps/4})$ for every colour $c$.

\item 
\textbf{For an edge $uv$, the probability that $u$ or $c(uv)$ is in $M$:}\\ Fix an edge $uv$. We first estimate the probability that ``$u\in M$ and $c(uv)\in M$''. Notice that there are two ways this can happen --- either $uv\in M$ or there are three distinct vertices, $w,x,y$ such that $uw$, $xy\in M$ with $c(xy)=c(uv)$. We will see that the  probability of the first event is negligible compared to the second. For an edge $uv$, let $J(uv)\subseteq E(G)\times E(G)$ be the set of pairs $(uw,xy)$ as described above. We have $$ (d_G(u)-1) (|E_G(c(uv))|- 2)\leq  |J(uv)| \leq d_G(u) |E_G(c(uv))|.$$
This implies $|J(uv)|= p^2n^2(1\pm n^{-\eps})^3$ which implies
\begin{align*}\Prob[u\in M, c(uv)\in M] &= 
 \Prob[vu\in M]+\sum_{(e,f)\in J(uv)}{\Prob[e,f\in M]} \\
&=\frac{\alpha}{n}(1\pm n^{-\eps/2})+(pn)^2 \frac{\alpha^2}{n^{2}}(1\pm n^{-\eps/2})^4 =p^2\alpha^2  (1\pm n^{-\eps/5}), \end{align*}
where in the last equality we used that $\alpha \approx e^{-3pq} q$ and so $\alpha/n \ll p^2\alpha^2 n^{-\eps/2}$, as long as $n$ is sufficiently large. Thus,

\begin{align*}\Prob[u\in M \textit{ or } c(uv)\in M]  &= \Prob [u\in M] + \Prob[ c(uv)\in M] - \Prob[u\in M, c(uv)\in M] = \left(2 p \alpha   - p^2\alpha^2 \right)(1\pm  n^{-\eps/5} ) \end{align*}
\end{itemize}
Finally we are ready to estimate the expectations in the claim. By  linearity of expectation, 
\begin{align*}\E[x_H]&= \sum_{v\in X}\Prob(v\not\in M)=n(1\pm n^{-\eps}) (1-p\alpha(1\pm n^{-\eps/4}))= n(1-p\alpha)(1\pm n^{-\eps/5}).\\
 \E[d_H(v)]&= \sum_{u\in N_G(v)}{(1-\Prob[u\in M \textit{ or } c(vu)\in M]})\\
& =  pn(1\pm n^{-\eps})(1-(2 p \alpha   + p^2\alpha^2)(1\pm 
n^{-\eps/5})) =  pn(1-p\alpha)^2(1\pm n^{-\eps/6})
\end{align*}
\end{proof}
Fix $m=(1-p\alpha)n$ and $p'= p(1-p\alpha)$. Notice that we may assume $m>n/2$ and $p'>p/2$ as we can guarantee $p\alpha < pq \leq 1/2$, since $n$ is sufficiently large.

Notice that the random variables $x_H$, $y_H$, $c_H$, $d_H(v)$, and $e_H(c)$ depend on the probability space $\Omega=\{0,1\}^{E(G)}$ with every coordinate being $1$ with probability $q/n$. All these variables are $3$-Lipshitz. Set $\sigma^2= 3^2\sum_{e\in E(G)}q/n(1-q/n)$ and notice that $$\frac{9pqn}{4}< \sigma^2 =  9pqn (1-q/n) (1\pm n^{-\eps})^2 <  10pqn.$$

Set $t=n^{1/2-\eps/3}$. Note that $t \leq 2\sigma/3$ and $t\sigma < 3n^{1-\eps/3}$, since $n$ is sufficiently large. By Lemma~\ref{Lemma_Azuma_variance} we have
$$\Prob{\left[|x_H-\E[x_H]| > 3n^{1-\eps/3}  \right]} 
< 2e^{-\frac{n^{1-\frac{2\eps}{3}}}{4}} < \frac{e^{-n^{1-\eps}}}{n^3}.$$

Similarly one can show that each of the random variables $y_H$, $c_H$, $d_H(v)$, and $e_H(c)$ are within $ 3n^{1-\eps/3}$ of their expectations with probability at least $1-\frac{e^{-n^{1-\eps}}}{n^3}$ . If we take a union bound over all vertices and colours, we can guarantee that $x_H$, $y_H$, $c_H$, $d_H(v)$, and $e_H(c)$ are simultaneously all  within $ 3n^{1-\eps/3}$ of their expectations for all $c$ and $v$. To conclude that (P1)-(P4) hold, it remains to check that $3n^{1-\eps/3} +  mn^{-\eps/6} \leq m^{1-\eps/10}.$ 
\end{proof}

We now prove the main result of this section.

\begin{proof}[Proof of Lemma~\ref{Lemma_main_expansion}]
Fix $\hat q=(25p^{-1}q^{4})^{1/3}$. Note that $\hat{q} \ll p$ since $q\ll p$.  Suppose $G$ has bipartition $(X,Y).$ Let $M_0$ be generated by picking every edge of $G$ with probability  $\hat q^2/n$ and deleting all  vertex and colour collisions.  Let $H$ be obtained from $G$ by removing the  vertices and colours of $M_0$.  By Lemma~\ref{Lemma_degree_concentration}, with probability at least  $1-e^{-n^{1-\eps}}$ we have that  $H$ is $(\eps/10, p',m)$-regular for some suitable $p'$ and $m$.  
When $H$ is $(\eps/10, p',m)$-regular, by  Lemma~\ref{Lemma_variant_of_MPS_nearly_perfect_matching}, there is a rainbow matching $M_1$ in $H$ of size $\geq m-m^{1-2\gamma}$.  Note that $|V(H)\cap X|= |X|-|M_0|$  and when $H$ is  $(\eps/10, p',m)$-regular we have
$|V(H)\cap X| = m(1\pm m^{-\eps/10})$. Therefore, it follows that $m\geq |X|- |M_0|-m^{1-\eps/10} \geq n- |M_0|-n^{1-\eps}-n^{1-\eps/10} $, which implies that $|M_1\cup M_0|\geq m-m^{1-2\gamma}+|M_0|\geq n -2n^{1-\eps/10} - n^{1-2\gamma} \geq n -2n^{1-2\gamma} \geq n -n^{1-\gamma} $, since $\gamma \ll \eps$.  We will show that the conclusion of the lemma holds for the randomized rainbow matching $M=M_0\cup M_1$. Notice that with probability  at least $1-e^{-n^{1-\eps}}$ we  have 
\begin{itemize}
    \item [$\mathcal{E}_1$:]\label{Event_Matching_Large} $|M|\geq (1-n^{-\gamma})n$.
\end{itemize}
 
Let $D$ be a bipartite graph with the same bipartition as $G$ having $\Delta(G)\leq d$ and at most $\leq 96p^{-1} n/d$ vertices of degrees  less than $ pd/6$. We can apply Lemma~\ref{Lemma_expansion_3rd_neighbourhood} to $M_0$, $G$, and $D$  and obtain that with probability at least $1- e^{-\hat q^{7}n/d}$ we have
\begin{itemize}
    \item [$\mathcal{E}_2$:] \label{Event_Expansion} For  any $S\subseteq X$ or  $S\subseteq Y$ with $|S|\geq \frac{25n}{d p \hat q^3}$  there exists $S'\subseteq S$ such that $|S'|= \frac{24n}{p \hat q^3d^2}$ such that $|N^3_{D,M_0}(S')|\geq (1-6\hat q)n$.
\end{itemize}
So with probability at least $1-e^{-n^{1-\eps}} - e^{-\hat q^{7}n/d} \geq 1-2e^{-n^{1-\eps}}$ both events $\mathcal{E}_1$ and $\mathcal{E}_2$ happen. Clearly (i) holds then, let us show that (ii) holds as well.  Let $D$ be as earlier and without loss of generality assume  $S\subseteq X$ with $|S|\geq \frac{n}{d  q^{4}}$.  Then since $q\ll p$ it follows that $|S|\geq \frac{25n}{d p \hat q^3}$. Since $\mathcal{E}_2$ holds there exists $S'\subseteq S$ such that $|S'|= \frac{24n}{p \hat q^3d^2} \geq 24 n/25q^4d^2$ such that $|N^3_{D,M_0}(S')|\geq (1-6\hat q)n$. Since $|X\setminus M|\leq n+n^{1-\eps}-(n-n^{1-\gamma}) \leq 2n^{1-\gamma}$, it follows that 
 $$|N^4_{D, M}(S')| \geq |N^3_{D,M_0}(S')| -|X\setminus M|\geq(1-  6\hat q)n- 2n^{1-\gamma} \geq  (1- q)n,$$
where the last inequality holds since $q\ll p$. Finally we can always add extra vertices of $S\setminus S'$ to $S'$ to make it exactly of size $n/q^4d^2$. This finishes the proof.
\end{proof}

\subsection{Switchings via Expansion}
\label{sec:switching}
The main result of this section is the lemma below which is our main tool for doing switchings.  It says that if we have two expanders $(D_1, M)$ and $(D_2,M)$ such that $D_1$ and $D_2$ are two bipartite graphs on the same vertex set then almost all pairs of vertices lying in the opposite sides of the bipartition of the graph $D_1\cup D_2\cup M$ have a short rainbow $(D_1\cup D_2)-M$-alternating path between them.

\begin{lemma}\label{Lemma_expansion_paths} Let $d^{-1/2}\leq A^{-1}\leq\eps/100\ll 1$ and further, $d\log{d}\geq 8A^2\log{n}$. Suppose we are given two bipartite graphs $D_1, D_2$ and a rainbow matching $M$ on the bipartition $(X,Y)$ with $\Delta(D_1), \Delta(D_2)\leq d$,   $M\cup D_1\cup D_2$  properly edge-coloured, $C(M)$, $C(D_1)$, $C(D_2)$  pairwise disjoint, and $|X|, |Y|< (2-4\eps)n$. If for both $i=1,2$,  $(D_i,M)$ is a $(d,A, \eps,n)$-expander then there is a set $B\subseteq X\cup Y$ of at most $4An/d$ vertices, such that for all $u,v\not\in B$ lying in the opposite sides of the bipartition, there is a  $(D_1\cup D_2)$-$M$-alternating rainbow path  from $u$ to $v$ of  length at most $8\lceil \frac{\log{n}}{\log{(d/4A)}} \rceil$.
\end{lemma}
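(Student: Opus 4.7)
Set $\ell := \lceil \log n / \log(d/4A)\rceil$, so the target path length is $8\ell$. The plan is to construct the path as a concatenation of two halves grown independently from $u$ and from $v$: the first half built from $u$ using only $D_1$-$M$-alternating rainbow edges, the second from $v$ using only $D_2$-$M$-alternating rainbow edges. Since $C(D_1)\cap C(D_2)=\emptyset$, since $M$ is rainbow with distinct $M$-edges on the two halves, and since the halves meet only at a single endpoint, any concatenation of two internally rainbow halves is itself rainbow. It thus suffices to show that for each expander $(D_i,M)$ separately, all but $\le 2An/d$ vertices $u$ admit rainbow $D_i$-$M$-alternating paths of length $\le 4\ell$ reaching all but $\eps n$ vertices of any chosen side. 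Setting $B$ to be the union of these exceptional sets gives $|B|\le 4An/d$, and for $u\in X\setminus B$, $v\in Y\setminus B$ one takes the $u$-reach of length $4\ell$ (ending in $X$) and the $v$-reach of length $4\ell-1$ (also ending in $X$). Since both sets have size $\ge (1-\eps)n$ in $X$ and $|X|<(2-4\eps)n$, they intersect, giving a rainbow path of length $\le 8\ell-1$.

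The main technical step is the iterative growth of $T_i(u) := \{w:\text{there is a rainbow } D_1\text{-}M\text{-alternating path of length } 4i \text{ from } u \text{ to } w\}$ under the expander property. I would argue by induction on $i$ with a two-regime dichotomy: in the \emph{large-set regime} $|T_i(u)|\ge An/d$, the $(d,A,\eps,n)$-expander property immediately yields a subset $T_i'(u)\subseteq T_i(u)$ of size $An/d^2$ with $|N^4_{D_1,M}(T_i'(u))|\ge (1-\eps)n$, and (subject to the rainbow caveat below) this pushes $|T_{i+1}(u)|$ past $(1-\eps)n$; in the \emph{small-set regime} $|T_i(u)|<An/d$, one argues that the set of $u$ whose reach fails to grow by a factor $\ge d/4A$ in a single round has size at most $An/d$, essentially because the union of small-and-non-growing reaches would itself be a large set on which the expander would have to conclude non-trivial expansion, producing a contradiction. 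A union bound over the $\le \ell$ rounds keeps the total exceptional set within $2An/d$ per expander.

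Rainbowness at each extension rests on the hypothesis $d\log d\ge 8A^2\log n$, which together with $A^{-1}\ge d^{-1/2}$ forces $4\ell\ll d$. On any length-$4i$ rainbow path with $i\le\ell$, at most $2\ell$ colours of $C(D_1)$ and $2\ell$ colours of $C(M)$ are used, so when extending from $s\in T_i'(u)$ by a 4-step $D_1$-$M$-segment, each forbidden colour blocks only $O(d)$ of the $\Theta(d^2)$ possible 4-step extensions through $s$ (using $\Delta(D_1)\le d$ and the matching structure of $M$). A direct double-count over targets $y\in N^4_{D_1,M}(T_i'(u))$ shows that at most $\eps n$ of them lose every rainbow pre-image from $T_i'(u)$, so the induction closes. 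The main obstacle will be formalising this rainbow-preserving double-count and making the small-set regime rigorous — in particular, identifying the correct ``non-growing'' set to feed contrapositively into the expander definition; the numerical hypotheses $d^{-1/2}\le A^{-1}\le\eps/100$ and $d\log d\ge 8A^2\log n$ are tailored precisely to provide the slack that both steps require.
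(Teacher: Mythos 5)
Your high-level skeleton matches the paper's: establish that all but $\le 2An/d$ vertices have large rainbow $D_i$-$M$-alternating reach of length $\le 4\ell$, set $B$ to be the union of the two exceptional sets, and glue a $u$-half with a $v$-half at a common vertex (or common $M$-edge, as the paper does), using $C(D_1)\cap C(D_2)=\emptyset$ to keep the concatenation rainbow and $|X|,|Y|<(2-4\eps)n$ to force an overlap. The rainbow-preserving double-count you describe for a single four-step extension (each forbidden colour/vertex killing only $O(d)$ of the $\Theta(d^2)$ possible extensions from a given starting vertex) is also essentially the paper's Lemma~\ref{Lemma_expansion_conversion}.

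The genuine gap is the ``small-set regime''. The expander definition only says something about sets $S$ with $|S|\ge An/d$; it gives no information whatsoever about the reach of a single vertex or of any set of size $<An/d$. You propose that ``the set of $u$ whose reach fails to grow by a factor $\ge d/4A$ in a single round has size at most $An/d$'' by feeding the union of the stalled reaches into the expander. But the expander only produces some small subset $S'$ of that union with $|N^4_{D,M}(S')|\ge(1-\eps)n$; $S'$ may be scattered across many different $T_i(u)$'s, so this tells you nothing about any individual $u$'s reach. There is no contrapositive here that yields per-vertex growth, and I do not see a repair of this step in the framework you set up. You yourself flag this as ``the main obstacle''; it is not a matter of bookkeeping but of a missing idea.

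The paper avoids the issue entirely by running the argument in the opposite direction: rather than growing a single vertex's reach from below, it \emph{shrinks} a large set while preserving a large reach. Concretely, assume for contradiction that $\ge An/d$ vertices have small rainbow $4t$-reach; take a set $S_1$ of size $\lceil An/d\rceil$ of such vertices, which by the conversion lemma already has $|\hat N^4(S_1)|\ge(1-2\eps)n$. The recursion lemma (Lemma~\ref{Lemma_expansion_recursion}) then produces $S_1\supseteq S_2\supseteq\cdots\supseteq S_t$ with $|S_{i}|=\lceil 2A|S_{i-1}|/d\rceil$ and $|\hat N^{4i}(S_i)|\ge(1-2\eps)n$, and after $t=\lceil\log n/\log(d/4A)\rceil$ rounds $S_t$ is a singleton, contradicting that every vertex of $S_1$ has small reach. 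The engine of the shrinking step is an averaging/density argument: you pick the $\lceil 2A|S|/d\rceil$ vertices of $S$ from which the most paths start, and then apply the conversion lemma to a size-$An/d$ subset of their reach with forbidden sets given by the colours/vertices already on the incoming paths. This is the key manoeuvre your proposal is missing, and it is what makes the bound on the number of exceptional vertices come out directly without any dichotomy.

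One small additional point: your assertion that ``the halves meet only at a single endpoint'' is not justified and in general false (the two halves can share intermediate vertices and $M$-edges). This does not sink the approach — because the walk still has no repeated colour (shared $M$-edges contribute the same colour once in the edge-union, and shared $D$-edges are impossible since $C(D_1)\cap C(D_2)=\emptyset$), and a rainbow alternating walk in a bipartite graph contains a rainbow alternating path (removing a cycle between two occurrences of a vertex removes an even number of edges, preserving the alternation) — but the step needs to be phrased in terms of a walk containing a path, as the paper does, rather than assumed away.
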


The alternating paths found by the above lemma will be used to go from one rainbow matching to another. When proving the existence of large rainbow matchings in typical graphs, we will start from some rainbow matching and iteratively do such switchings, eventually enlarging the original rainbow matching to one of a desired size. We need a simple lemma which claims that if we have an expander $(D,M)$ then any ``small'' perturbation of the matching $M$ will keep the expansion properties. (In applications of this lemma ``small'' would mean sub-polynomial.)

\begin{lemma}\label{aux:exp1toexp2} Suppose we are given two  matchings $M_1, M_2$ and a bipartite graph $D$ with $\Delta(D)\leq d$, all on the same bipartition $(X,Y)$. If  $(D, M_1)$ is a $(d, A, \eps,n)$-expander and $|M_1\triangle M_2|< \eps n/10 d^2$ then  $(D, M_2)$ is a $(d, A, 2\eps, n)$-expander. \Alexey{[Alexey: the previous version of this which allowed the two graphs to have different vertex sets didn't prove the condition $X>An/d$, $Y>An/d$ for $(M_2, D)$.]}
\end{lemma}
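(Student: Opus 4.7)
The plan is to show that the symmetric difference $M_1\triangle M_2$ is too small to spoil more than $\eps n$ vertices in any fourth alternating-neighbourhood, so the same witness $S'$ that certifies $(d,A,\eps,n)$-expansion for $(D,M_1)$ will certify $(d,A,2\eps,n)$-expansion for $(D,M_2)$. Fix $S\subseteq X$ with $|S|\geq An/d$ (the case $S\subseteq Y$ is symmetric). Using that $(D,M_1)$ is a $(d,A,\eps,n)$-expander, pick $S'\subseteq S$ with $|S'|=An/d^2$ and $|N^4_{D,M_1}(S')|\geq(1-\eps)n$. Since $M_1,M_2,D$ share the bipartition $(X,Y)$, the ambient part-sizes are the same for $(D,M_1)$ and $(D,M_2)$, so it suffices to prove
\begin{equation*}
\bigl|N^4_{D,M_1}(S')\setminus N^4_{D,M_2}(S')\bigr| \leq \eps n.
\end{equation*}

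Set $E^*:=M_1\setminus M_2\subseteq M_1\triangle M_2$, so $|E^*|<\eps n/(10d^2)$ and $|V(E^*)|\leq 2|E^*|<\eps n/(5d^2)$. Any length-$4$ $D$-$M_1$-alternating path $s-v_1-v_2-v_3-v_4$ whose matching edges $v_1v_2$ and $v_3v_4$ both lie in $M_1\cap M_2$ is automatically a $D$-$M_2$-alternating path, so $v_4\in N^4_{D,M_2}(S')$. Hence every lost vertex $v\in N^4_{D,M_1}(S')\setminus N^4_{D,M_2}(S')$ must be the endpoint of some $D$-$M_1$-path in which $v_1v_2\in E^*$ or $v_3v_4\in E^*$.

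I split into the two cases. If $v_3v_4\in E^*$, then the endpoint $v=v_4$ lies in $V(E^*)$, contributing at most $\eps n/(5d^2)$ vertices. If instead $v_1v_2\in E^*$, then $v_2\in V(E^*)$; the vertex $v_3\in N_D(v_2)$ has at most $d$ options (by $\Delta(D)\leq d$), and $v_4$ is then the unique $M_1$-partner of $v_3$, giving at most $d\cdot|V(E^*)|\leq \eps n/(5d)$ endpoints. Summing, strictly fewer than $\eps n$ vertices are lost, so $|N^4_{D,M_2}(S')|\geq(1-2\eps)n$, as required. I expect no substantive obstacle; the key observation is that the matching structure of $M_1$ reduces what might look like a $d^2$-term count to one that is linear in $d$, which is precisely why the hypothesis $|M_1\triangle M_2|<\eps n/(10d^2)$ is enough.
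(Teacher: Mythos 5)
Your proof is correct, and it follows the same basic strategy as the paper: reuse the certifying set $S'$ from $(D,M_1)$ and show that only $O(\eps n)$ of the at-least-$(1-\eps)n$ length-$4$ $D$-$M_1$-alternating paths fail to survive as $D$-$M_2$-alternating paths, by charging each lost path against the small set coming from $|M_1\triangle M_2|$. The difference is in the bookkeeping, and yours is actually the more careful one. The paper declares a path \emph{bad} if it touches a vertex of $V(M_1)\setminus V(M_2)$ and then asserts that every good path is $D$-$M_2$-alternating; but that is not quite right, since an $M_1$-edge $uv$ with $uv\notin M_2$ can still have both $u,v\in V(M_2)$ (each matched in $M_2$ to a \emph{different} partner), so such a path would be ``good'' yet not $M_2$-alternating. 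The correct small set is $V(M_1\triangle M_2)$ (equivalently, one should require that no $M_1$-edge of the path lies in $M_1\setminus M_2$), which still has size $\leq 2|M_1\triangle M_2|$, so the paper's constants are unaffected. Your formulation in terms of $E^*=M_1\setminus M_2$ encodes exactly this corrected condition: a length-$4$ $D$-$M_1$-path transfers to $M_2$ if and only if neither of its two matching edges is in $E^*$, and your two-case count $|V(E^*)|+d\,|V(E^*)|<\eps n$ is a cleaner (and slightly tighter) version of the paper's $5d^2|V(M_1\triangle M_2)|<\eps n$. The only other point worth verifying is the part-size requirement in Definition~\ref{def:expander}, which you correctly dispose of in one line since $M_1$, $M_2$, $D$ all live on the same $(X,Y)$.
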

\begin{proof} 
Let $S\subseteq X$ or  $Y$ with $|S|\geq An/d$.  
Since $(D, M_1)$ is a $(d, A, \eps,n)$-expander, there is a subset $S'\subseteq S$ with $|S'|=An/d^2$ such that $|N^4_{D, M_1}(S')|\geq (1-\eps)n$. By definition, for each $v\in N^4_{D, M}(S')$ there is a $D$-$M_1$-alternating path of length four from $S'$ to $v$. We call such an alternating $D$-$M_1$ path of length four \emph{bad} if it uses any vertex from $V(M_1)\setminus V(M_2)$  and \emph{good} otherwise. The number of bad paths is at most $5\Delta(D)^2 |V(M_1)\setminus V(M_2)|\leq 10|M_1\triangle M_2|d^2 < \eps n$.
Indeed, there are $5$ ways to choose which vertex of the path of length four is in $V(M_1)\setminus V(M_2)$ and then at most $\Delta(D)^2$ ways to choose $D$-$M_1$-alternating path which has this vertex in the correct position.
Hence there are at least $(1-2\eps)n$ good paths. Every good path is an alternating $D$-$M_2$  path, and so we have $|N^4_{D, M_2}(S')|\geq (1-2\eps)n$.
\end{proof}

 We say a path $P$ in an edge-coloured graph $G$ \emph{avoids} a vertex subset $V'\subseteq V(G)$ if it doesn't contain any vertex from $V'$. Similarly, $P$  \emph{avoids} a colour subset $C'\subseteq C(G)$ if it does not contain any edge of colours from $C'$. To prove Lemma~\ref{Lemma_expansion_paths} we first show that given an expander $(D, M)$ such that  $D$ and $M$ are colour disjoint, all large sets $S$ ``expand'' in the following coloured fashion: almost every vertex in $v\in V(D)\cup V(M)$ can be reached from some $s\in S$ via a rainbow $D-M$-alternating path of length four and additionally, this path avoids some small set of forbidden colours and vertices prescribed to $s$ a priori (Lemma~\ref{Lemma_expansion_conversion}).  Then we apply this iteratively  to obtain a similar expansion property for smaller sets (Lemma~\ref{Lemma_expansion_recursion}). Finally via applying this iteration multiple times we show that almost all vertices can reach almost all vertices via rainbow $D-M$-alternating paths of length $O(\log{n}/\log{\log{n}})$. (Lemma~\ref{Lemma_expansion_one_vertex}).

\begin{lemma}\label{Lemma_expansion_conversion} 
Let $d^{-1/2}\leq A^{-1}\leq\eps/100\ll 1$. Suppose we are given a bipartite graph $D$ with $\Delta(D)\leq d$,  $M$ a rainbow matching such that $M\cup D$ has bipartition $(X,Y)$, is properly edge-coloured, and $C(M)$ and $C(D)$ are disjoint. Let $C=C(D)\cup C(M)$, $V=V(D)\cup V(M)$. If $(D,M)$ is a $(d,A, \eps,n)$-expander, then for any  $S\subseteq X$ or $Y$ with $|S|=An/d$ and any collections of ``forbidden'' colours and vertices $\{C(s)\subseteq C|s\in S\}$,  $\{V(s)\subseteq V|s\in S\}$ with $|C(s)|\leq  A^{-2}d$, $|V(s)|\leq  A^{-2}d$, $s\notin V(s)$ for all $s\in S$ the following holds. There are at least $(1-2\eps)n$ vertices  $v\in V$ for which there is a $D$-$M$-alternating rainbow path $P_v$ of length four going from some $s_v\in S$ to $v$ and avoiding $C(s_v)$ and $V(s_v)$.
\end{lemma}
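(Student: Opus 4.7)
My plan is to invoke the expander hypothesis on $S$ directly and then control the loss from the rainbow and forbidden-avoidance constraints by a first-moment count of bad alternating paths. Since $|S|=An/d$ meets the threshold of Definition~\ref{def:expander}, there is a subset $S'\subseteq S$ of size $An/d^2$ with $|N^4_{D,M}(S')|\ge (1-\eps)n$. Call a $D$-$M$-alternating path of length four from $s\in S'$ \emph{good} if it is rainbow and avoids $C(s)\cup V(s)$, and call a vertex $v\in N^4_{D,M}(S')$ \emph{lost} if every alternating path of length four from $S'$ ending at $v$ fails to be good. By the definition of $N^4_{D,M}(S')$ each such $v$ is hit by at least one length-four alternating path from $S'$, so a union bound gives that the number of lost vertices is at most the total number of bad (i.e.\ not good) $D$-$M$-alternating paths of length four starting in $S'$. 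It will suffice to show this total is at most $\eps n$.

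I will split bad paths into three types: (a) paths that are not rainbow; (b) paths using some colour in $C(s)$ where $s$ is the start; (c) paths passing through some vertex in $V(s)\setminus\{s\}$. For (a), the two $M$-edges of the path have distinct colours because $M$ is a rainbow matching, and $C(D)\cap C(M)=\emptyset$, so the only possible colour collision is between the two $D$-edges. Given $s$, once the first $D$-edge $sx_2$ is chosen ($\le d$ options) the subsequent $M$-step is forced, and since $D$ is properly edge-coloured at most one continuation at $x_3$ repeats the colour $c(sx_2)$; this yields at most $d$ non-rainbow paths per $s$ and hence $|S'|\cdot d=An/d\le n/A$ in total, using $d\ge A^2$. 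For (b) and (c), the key observation is that pinning down any single edge or any single vertex of a length-four $D$-$M$-alternating path from $s$ determines all but one of the remaining edges up to at most $O(d)$ choices: the $M$-edges are forced by their endpoints, and each free endpoint in $D$ branches at most $d$ ways. Hence each forbidden colour (respectively forbidden vertex) contributes $O(d)$ bad paths per $s$, and summing over $s\in S'$ and the at most $A^{-2}d$ forbidden items per $s$ gives at most $|S'|\cdot A^{-2}d\cdot O(d)=O(n/A)$ paths of each type. Combining the three contributions and using $A^{-1}\le \eps/100$, the total number of bad paths is at most $\eps n$.

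It follows that at most $\eps n$ vertices of $N^4_{D,M}(S')$ are lost, so at least $(1-\eps)n-\eps n=(1-2\eps)n$ vertices admit a good alternating path of length four starting in $S'\subseteq S$, which is the conclusion. The main subtlety I anticipate is the case analysis for types (b) and (c): one must verify that regardless of which of the four edges or five vertices of the alternating path carries the forbidden colour or vertex, the number of extensions really is $O(d)$ and not, say, $d^2$, and one must be slightly careful that the two $M$-steps are always forced by their preceding endpoint rather than contributing extra branching. The parameter slack $d\ge A^2$ (from $d^{-1/2}\le A^{-1}$) together with $A\ge 100/\eps$ provides enough room to absorb the absolute constants hidden in the $O(\cdot)$ bounds.
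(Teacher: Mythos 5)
Your argument is essentially identical to the paper's: both pass to the subset $S'\subseteq S$ of size $An/d^2$ guaranteed by the expander property, split bad length-four alternating paths into the same three types (colour repetition between the two $D$-edges, hitting a forbidden colour, hitting a forbidden vertex), bound each type by $O(A^{-2}d^2)$ per starting vertex in $S'$, and sum to get at most $9n/A\le\eps n$ bad paths. The only cosmetic difference is that you phrase the deduction via ``lost'' vertices whereas the paper fixes one witnessing path $P_v$ per $v\in N^4_{D,M}(S')$ and keeps those $v$ for which $P_v$ is good; both give the same $(1-2\eps)n$ bound.
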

\begin{proof} 
Since $(D,M)$ is a $(d,A, \eps,n)$-expander there exists  $S'\subseteq S$ of order $An/d^2$ such that  $|N^4_{D, M}(S')|\geq (1-\eps)n$. For every $v\in N^4_{D, M}(S')$, there is some $s\in S'$ and a $D$-$M$-alternating path $P_{v}$ of length four going from $s$ to $v$.  We say that $P_{v}$ is \emph{bad} if either $P_{v}$ is not rainbow, or $P_{v}$ doesn't avoid $C(s), V(s)$. We say $P_{v}$ is good otherwise.  We will show that the total number of  bad paths among all the paths $\{P_{v}\}_{v\in V}$ is at most $\eps n$. Note that this  would be enough for the conclusion of the lemma as we can take the final vertex set to be $\{v\in N^4_{D, M}(S')|P_{v}\textit{ is good}\}$. To count the total number of bad paths $P_v$, we count for all $s\in S'$   how many bad paths start at $s$.
 
Fix a vertex $s\in S'$. Notice that a bad $D$-$M$ alternating path $sxyzw$ must satisfy at least one of the following:
\begin{itemize}
\item $sxyzw$ is not rainbow. Since  $D\cup M$ is properly edge-coloured, $M$ is rainbow and colour disjoint from $D$, this happens only when $c(sx)=c(yz)$. There are at most $\Delta(D)\leq  d$ such $D$-$M$ alternating paths starting from $s$, since there are at most $d$ choices for $x$ and at most one for all the other vertices.
\item some vertex among $x,y,z,w$ is from $V(s)$. The number of such $D$-$M$ alternating paths is at most $4|V(s)|d$. Indeed, there are at most four choices which vertex among $x,y,z,w$ is in $V(s)$. Then $|V(s)|$ choices to specify that vertex $v$. Now suppose $x=v$ then there is at most one choice for $y$ depending if $x$ is covered by the matching $M$ or not, at most $d$ choices for $z$ since $\Delta(D)\leq d$ and finally at most one choice for $w$. So there are at most $d$ such paths with $x=v$. Similar argument applies if $y=v$ or $z=v$ or $w=v$.
\item some edge among $sx$, $xy$, $yz$, $zw$ has a colour appearing in $C(s)$. The number of such $D$-$M$ alternating paths is at most $4|C(s)|d$. Again, there are $4|C(s)|$  choices to specify which one of the edges $sx$, $xy$, $yz$, $zw$ has colour  $c\in C(s)$.  Suppose we are counting the number of paths $sxyzw$ with $c(sx)=c$. Since the colouring is proper there is at most one edge of colour $c$ coming out of $s$ therefore  at most one choice for vertex $x$. Then there is at most one choice for $y$, depending if $x$ is covered by the matching $M$ or not, at most $d$ choices for $z$ since $\Delta(D)\leq d$ and finally at most one choice for $w$. A similar analysis with the same bound will apply if $c(xy)=c$, $c(yz)=c$ or $c(zw)=c$.
\end{itemize}
Thus  the total number of bad paths starting at $s$ is $\leq 8A^{-2}d^2+d\leq 9A^{-2}d^2$ (using $d^{-1/2}\leq A^{-1}$). Summing over all $s\in S'$, we get that the total number of bad paths is at most $9A^{-2}d^2|S'|= 9A^{-1}n\leq \eps n$ as desired. 
\end{proof}

Recall that $N^{t}_{D,M}(S)$ denotes the set of vertices to which there is a $D$-$M$-alternating path of length $t$ starting in $S$.
We use  $\hat N^t_{G,H}(S)$ to denote the set of of vertices to which there is a $D$-$M$-alternating \emph{rainbow} path of length $t$ starting in $S$. 

\begin{lemma}\label{Lemma_expansion_recursion}
Let $d^{-1/2}\leq A^{-1}\leq\eps/100\ll 1$ and $t\leq A^{-2}d/4$. Suppose we are given a bipartite graph $D$ with $\Delta(D)\leq d$,  $M$ a rainbow matching such that $M\cup D$ has bipartition $(X,Y)$, is properly edge-coloured, and $C(M)$ and $C(D)$ are disjoint. If $(D, M)$ is a $(d,A, \eps,n)$-expander then for every set of vertices  $S\subseteq X$ or $Y$  with  $|\hat N^{4t}_{D,M}(S)|\geq (1-2\eps)n$ there is $S'\subseteq S$ with $|S'|=   \lceil 2A|S|/d \rceil$  and $|\hat N^{4t+4}_{D,M}(S)|\geq (1-2\eps)n$.
\end{lemma}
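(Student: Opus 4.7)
My plan is to exploit the fact that $S$ already expands in a rainbow sense to depth $4t$, and then apply Lemma~\ref{Lemma_expansion_conversion} from the ``frontier'' $T:=\hat N^{4t}_{D,M}(S)$ to extend each witnessing path by one more $D$-$M$-block of length $4$, while steering clear of the colours and vertices already consumed on the initial segment.

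First, I will set $T:=\hat N^{4t}_{D,M}(S)$, so $|T|\geq (1-2\eps)n$ by hypothesis. For every $v\in T$ I will fix a witnessing rainbow $D$-$M$-alternating path $P_v$ of length $4t$ that starts with a $D$-edge at some $s_v\in S$ and ends at $v$. Because $4t$ is even, $T$ lies on the same side of the bipartition as $S$, and the last edge of each $P_v$ lies in $M$. Grouping the endpoints by origin, let $T_s:=\{v\in T:s_v=s\}$; these sets partition $T$ so $\sum_{s\in S}|T_s|\geq (1-2\eps)n$.

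Next, I will take $S'\subseteq S$ to consist of the $\lceil 2A|S|/d\rceil$ elements $s$ with the largest values $|T_s|$. A standard ``top-$k$ beats the average'' argument gives
\[ \Big|\bigcup_{s\in S'}T_s\Big| \;\geq\; \frac{|S'|}{|S|}\,|T| \;\geq\; \frac{2A(1-2\eps)n}{d} \;\geq\; \frac{An}{d}, \]
where the last inequality uses $\eps\ll 1$; moreover $|S'|\leq|S|$ since $2A/d\leq 2d^{-1/2}\leq 1$. Let $T'_0$ be any subset of $\bigcup_{s\in S'}T_s$ of size exactly $An/d$. The key step is to feed $T'_0$ into Lemma~\ref{Lemma_expansion_conversion} with forbidden data $V(v):=V(P_v)\setminus\{v\}$ and $C(v):=C(P_v)$ for each $v\in T'_0$. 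Each has size at most $4t\leq A^{-2}d$ by the hypothesis $t\leq A^{-2}d/4$, so the lemma yields at least $(1-2\eps)n$ vertices $w$, each of which admits a rainbow $D$-$M$-alternating path $Q_w$ of length $4$ starting with a $D$-edge at some $v_w\in T'_0$ and avoiding the colours and vertices of $P_{v_w}$ (apart from $v_w$ itself).

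Finally, I will concatenate $P_{v_w}$ with $Q_w$ for each such $w$. The alternation persists because $P_{v_w}$ ends with an $M$-edge and $Q_w$ opens with a $D$-edge; the concatenation is rainbow since $Q_w$ is rainbow, is colour-disjoint from $P_{v_w}$ by the choice of $C(v_w)$, and $C(D)\cap C(M)=\emptyset$; and it is a genuine path because $Q_w$ avoids $V(P_{v_w})\setminus\{v_w\}$. Its length is $4t+4$ and it begins at $s_{v_w}\in S'$ with a $D$-edge, so $w\in\hat N^{4t+4}_{D,M}(S')$, yielding the desired $|\hat N^{4t+4}_{D,M}(S')|\geq (1-2\eps)n$. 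The only non-routine point, and the one where I expect the main care to go, is the selection of $S'$: one has to convert a lower bound on $|T|$ into a lower bound on $|\bigcup_{s\in S'}T_s|$ that still exceeds $An/d$, which is exactly what the factor of $2$ in $\lceil 2A|S|/d\rceil$ and the slack $\eps\ll 1$ buy. Once $|T'_0|\geq An/d$ is in hand, everything else is a direct invocation of Lemma~\ref{Lemma_expansion_conversion} plus routine bookkeeping on the concatenation.
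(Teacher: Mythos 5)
Your proof is correct and follows essentially the same route as the paper's: fix witnessing rainbow paths $P_v$ with origins $s_v$, let $S'$ be the $\lceil 2A|S|/d\rceil$ origins carrying the most paths (so a frontier subset of size $An/d$ survives), apply Lemma~\ref{Lemma_expansion_conversion} with forbidden data $C(P_v)$ and $V(P_v)\setminus\{v\}$, and concatenate. Your precaution of drawing $T'_0$ from $\bigcup_{s\in S'}T_s$ (ensuring each chosen frontier vertex's canonical path really starts in $S'$) is a small but genuine improvement in precision over the paper's phrasing, which takes $T$ merely inside $\hat N^{4t}_{D,M}(S')$.
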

\begin{proof}
For each $v\in \hat N^{4t}_{D,M}(S)$, by definition there exists $s_v\in S$ and a $D$-$M$-alternating rainbow path $P_{v}$  of length $4t$ from  $s_v$ to $v$. For each $v$ fix such $s_v$ and $P_v$. For each $s\in S$, let $p(s)$ be the number of paths $P_v$ starting at $s$. 
We have $\sum_{s\in S}p(s)=|\hat  N^{4t}_{D,M}(S)|\geq (1-2\eps)n$. Let $S'\subseteq S$ be a  subset of size $\lceil2A|S|/d\rceil$ with $\sum_{s\in S'}p(s)$ maximum.
By averaging, $\frac{1}{|S'|}\sum_{s\in S'}p(s)\geq  \frac{1}{|S|}\sum_{s\in S}p(s)$.
Thus  $|\hat N^{4t}_{D,M}(S')|\geq \sum_{s\in S'}p(s)\geq \frac{|S'|}{|S|}\sum_{s\in S}p(s)\geq (1- 2\eps)2An/d\geq An/d$.

Let  $T\subseteq \hat N^{4t}_{D,M}(S')$  be a subset of size exactly $An/d$. To each vertex $v\in T$,  assign forbidden sets of colours and vertices $C(v):=C(P_v)$, $V(v):=V(P_v)\setminus\{v\}$, and note  that $|C(v)|, |V(v)|\leq 4t\leq A^{-2}d$. By Lemma~\ref{Lemma_expansion_conversion},  we get $(1-2\eps)n$ vertices $u$ together with rainbow  $D-M$-alternating paths $Q_u$ of length four from  some $v_u\in T$ such that $Q_u$ avoids $C(v_u)$ and $V(v_u)$. It is easy to check that for each $u$, $P_{v_u}\cup Q_u$ is a rainbow $D-M$-alternating path of length $4t+4$.  This finishes the proof.
\end{proof}
 
\begin{lemma}\label{Lemma_expansion_one_vertex}
Let $d^{-1/2}\leq A^{-1}\leq\eps/100\ll 1$ and further, $d\log{d
}\geq 8A^2 \log{n}$. Suppose we are given a bipartite graph $D$ with $\Delta(D)\leq d$,  $M$ a rainbow matching such that $M\cup D$ has bipartition $(X,Y)$, is properly edge-coloured, and $C(M)$ and $C(D)$ are disjoint. If $(D, M)$ is a $(d,A, \eps,n)$-expander then  for  $t=\lceil\frac{\log n}{\log{(d/4A)}}\rceil$, all but possibly at most $2An/d$ vertices $v\in V(M)\cup V(D)$ satisfy $$|\hat N^{4t}_{D,M}(v)|\geq (1-2\eps)n.$$
\end{lemma}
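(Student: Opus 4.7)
My plan is to argue by contradiction, bootstrapping Lemma~\ref{Lemma_expansion_conversion} into the recursion of Lemma~\ref{Lemma_expansion_recursion}. By the symmetry between the two sides of the bipartition, it suffices to show that the ``bad'' set $B\subseteq X$ of vertices $v\in X$ with $|\hat N^{4t}_{D,M}(v)|<(1-2\eps)n$ satisfies $|B|<An/d$; the analogous bound on $Y$ then yields a total of at most $2An/d$ bad vertices. So, suppose for contradiction that $|B|\geq An/d$ and pick an arbitrary $S_1\subseteq B$ with $|S_1|=An/d$.

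The base step is a direct application of Lemma~\ref{Lemma_expansion_conversion} to $S_1$ with trivial forbidden sets $C(s)=V(s)=\emptyset$ for each $s\in S_1$; this gives $|\hat N^{4}_{D,M}(S_1)|\geq(1-2\eps)n$.

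For the inductive step I iterate Lemma~\ref{Lemma_expansion_recursion}, obtaining a chain $S_1\supseteq S_2\supseteq\dots$ with $|S_{j+1}|=\lceil 2A|S_j|/d\rceil$ and $|\hat N^{4j}_{D,M}(S_j)|\geq(1-2\eps)n$ at every level $j$. Since $\lceil x\rceil\leq 2x$ for $x\geq 1$, the sizes decay by a factor of at most $4A/d$ per step until they first reach $1$; the crucial observation is that once $|S_k|=1$ the ceiling $\lceil 2A/d\rceil=1$ forces $|S_{k+1}|=1$ and hence $S_{k+1}=S_k$, so a singleton is preserved by every further iteration. A short computation with $t=\lceil\log n/\log(d/4A)\rceil$ shows that $(4A/d)^{t-1}\cdot(An/d)\leq 1$, so after $t-1$ iterations we arrive at $S_t=\{v\}$ for some $v\in S_1\subseteq B$ satisfying $|\hat N^{4t}_{D,M}(v)|\geq(1-2\eps)n$, contradicting the badness of $v$.

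The main technical obstacle is verifying the hypothesis $j\leq A^{-2}d/4$ of Lemma~\ref{Lemma_expansion_recursion} at every level $j\leq t-1$ of the iteration, i.e.\ that $t\leq 1+A^{-2}d/4$. Unpacking the assumptions $A\geq 100/\eps$, $A^2\leq d$, and $d\log d\geq 8A^2\log n$ one derives $d\geq 16A^2$, which in turn gives $\log(d/4A)\geq (\log d)/2$. Dividing $d\log d\geq 8A^2\log n$ by $2\log(d/4A)$ then yields $\log n/\log(d/4A)\leq d/(4A^2)$, which is precisely what is required to legitimise every application of Lemma~\ref{Lemma_expansion_recursion} in the chain and hence to close the contradiction.
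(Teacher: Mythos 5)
Your argument is correct and coincides with the paper's own proof: assume for contradiction that one side of the bipartition has at least $An/d$ bad vertices, seed the iteration by applying Lemma~\ref{Lemma_expansion_conversion} to a set $S_1$ of size $\lceil An/d\rceil$ with empty forbidden sets, then repeatedly apply Lemma~\ref{Lemma_expansion_recursion} until the selected set collapses to a singleton, using $d\log d\geq 8A^2\log n$ together with $\log(d/4A)\geq\tfrac12\log d$ to verify the hypothesis $t\leq A^{-2}d/4$ at every level. The only difference is cosmetic: you spell out the ceiling-function bookkeeping (that $\lceil x\rceil\leq 2x$ for $x\geq 1$ and that singletons are fixed points of the recursion) which the paper leaves implicit.
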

\begin{proof}
Suppose the lemma is false. Then without loss of generality,
there are  at least $An/d$  vertices  $v\in X$ such that $|\hat N^{4t}_{D,M}(v)|< (1-2\eps)n$. Let . Let $S_0\subseteq X$ be this set of vertices. 

Choose $S_1\subseteq S_0$ to be of size exactly $\lceil An/d \rceil$. Then by Lemma~\ref{Lemma_expansion_conversion}  it follows that  $|\hat N^4_{D,M}(S_1)|\geq (1- 2\eps)n$ (we assign $C(s)=V(s)=\emptyset$ for all $s\in S_1$).
Now we can iteratively apply Lemma~\ref{Lemma_expansion_recursion}  and obtain sets $S_1 \supseteq S_2\supseteq \dots \supseteq S_t$ with $|\hat N^{4i}_{D,M}(S_i)|\geq (1-2\eps)n$ and $|S_i|= \lceil 2 A|S_{i-1}|/d\rceil$ such that $|S_t|=1$. Indeed, note that $|S_1|\leq 4An/d$ and for all $i\geq 2$, $|S_i|\leq 4 A|S_{i-1}|/d$. Moreover,  the iterative steps can be applied because $i\leq t \leq 2\log{n}/\log{d}\leq A^{-2}d/4$.
Therefore for $t= \lceil\frac{\log n}{\log {(d/4A)}}\rceil$  we must have $|S_t| \leq 1$, but since $S_i$ is always non-empty we have $|S_t|=1$. Thus there is a vertex $s\in S_t$ with $|\hat N^{4t}_{M,D}(s)|\geq (1-2\eps)n$, contradicting the definition of $S_0$.
\end{proof}

We now prove the main lemma of this section.
\begin{proof}[Proof of Lemma~\ref{Lemma_expansion_paths}:] 
Set $t=\lceil\frac{\log n}{\log d/4A}\rceil$. By applying Lemma~\ref{Lemma_expansion_one_vertex}  first with $(D_1,M)$ and  then with $(D_2,M)$ we obtain is a set  $B$ of order at most $4An/d$ such that all vertices outside $B$ have $|\hat N^{4t}_{D_i,M}(v)|\geq (1-2\eps)n$ for $i=1,2$. Now let $u\in X,v\in Y$ be two vertices  outside $B$, then  $|\hat N^{4t}_{D_1,M}(u)|\geq (1-2\eps)n$ and $|\hat N^{4t}_{D_2,M}(v)|\geq (1-2\eps)n$. Notice that $\hat N^{4t}_{D_1,M}(u)\subseteq X\cap V(M)$ and $\hat N^{4t}_{D_2,M}(v)\subseteq Y\cap V(M)$ (since these sets are defined by \emph{even} length alternating paths from $u$ and $v$). For any $x\in N^{4t}_{D_1,M}(u)$, by definition, there is a rainbow $D_1-M$-alternating path going from $u$ to $x$ of length $4t$ whose last edge is in $M$, call this path $P_{ux}$. Similarly, for any $y\in N^{4t}_{D_1,M}(v)$, there is a rainbow $D_2-M$-alternating path going from $v$ to $y$ of length $4t$ whose last edge is in $M$, call this $P_{vy}$. We claim that  there is a pair $x\in,y\in Y$ such that $xy\in M$ is the last edge of $P_{ux}$ and $P_{vy}$. Indeed, otherwise we will have $ (2-4\eps)n \leq |\hat N^{4t}_{D_1,M}(u)|+|\hat N^{4t}_{D_2,M}(v)| \leq |M| \leq |X|, |Y|< (2-4\eps)n$, a contradiction.
So $P_{ux} \cup P_{vy}$ is a rainbow walk which must contain a  rainbow $(D_1\cup D_2)$-$M$-alternating path from $u$ to $v$  of length at most $8t$. (In fact, since $u,v$ lie in the opposite sides of the bipartition, this path must be of odd length).
\end{proof}

\section{Large matchings in coloured typical graphs}
In this section, we combine previous ones to show that typical graphs have large rainbow matchings. We prove the following technical theorem which will imply all our other theorems.  
\begin{theorem}\label{Theorem_typical_technical}
Let $n^{-1}\ll k^{-1}\ll p\leq 1$, $n^{-1} \ll \eps<1$ and fix $d=\frac{k\log n }{\log\log n}$. Suppose that we have graphs $G\subseteq H$ with the following properties:
\begin{itemize}
    \item $H$ is properly edge-coloured, bipartite with bipartition $(X,Y)$ such that $|X|=|Y|=n$, and vertex $v\in V(H)$ has $|N_H(v)\cap V(G)|\geq 0.3pn$.
    \item $G$ is coloured $(\eps, p, n)$-typical with at least $n+6d$ colours.
\end{itemize}
Then $H$ has a rainbow perfect matching.
\end{theorem}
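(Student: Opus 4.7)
The plan is to combine Lemma~\ref{Lemma_main_expansion} with an iterative one-edge augmentation scheme driven by Lemma~\ref{Lemma_expansion_paths}: I would first build a near-perfect rainbow matching $M^* \subseteq G$ enjoying \emph{universal} expansion against every small colour class, then grow $M^*$ by one edge at a time via short rainbow alternating paths until it becomes a perfect matching of $H$.

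First I fix a constant $q$ with $q \ll p, \eps$ and $q^{-1} \ll k^{1/8}$ so that $A := q^{-4}$ satisfies the constraints $100/\eps \leq A \leq \sqrt{k/8}$ required later by Lemma~\ref{Lemma_expansion_paths}, and apply Lemma~\ref{Lemma_main_expansion} to $G$. This produces a randomized rainbow matching $M^* \subseteq G$ with $|M^*| \geq (1 - n^{-\gamma})n$ such that, for each fixed bipartite graph $D$ on bipartition $(X, Y)$ satisfying the lemma's degree hypothesis, $(D, M^*)$ is a $(d, A, q, n)$-expander with probability $\geq 1 - 2 e^{-n^{1-\eps}}$. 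I then union-bound over the $\binom{|C(H)|}{d} = e^{O(d \log n)} \ll e^{n^{1-\eps}}$ many $d$-subsets $C^\circ \subseteq C(H)$: each defines a candidate $D := H[C^\circ]$ whose degree hypothesis follows from Lemma~\ref{lem:fewisolated} applied to $G[C^\circ]$ combined with the crude bound $|V(H) \setminus V(G)| \leq 2n^{1-\eps}$, so the union bound fixes an $M^*$ for which $(H[C^\circ], M^*)$ is a $(d, A, q, n)$-expander for \emph{every} $d$-subset $C^\circ \subseteq C(H)$.

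Starting from $M_0 := M^*$, I will construct a sequence of rainbow matchings $M_1, M_2, \ldots, M_T$ in $H$ with $|M_{i+1}| = |M_i| + 1$ and $T \leq n^{1-\gamma}$, terminating when $|M_T| = n$. At step $i$, writing $U_X^i := X \setminus V(M_i)$, $U_Y^i := Y \setminus V(M_i)$, and $C'_i := C(H) \setminus C(M_i)$, the hypothesis $|C(H)| \geq n + 6d$ ensures $|C'_i| \geq 6d$ throughout. I pick $u \in U_X^i$, $v \in U_Y^i$, choose two disjoint $d$-subsets $C_1, C_2 \subseteq C'_i$ (discussed below), and set $D_j := H[C_j]$. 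Lemma~\ref{aux:exp1toexp2} then transfers the $(d, A, q, n)$-expansion of $(D_j, M^*)$ to a $(d, A, 2q, n)$-expansion of $(D_j, M_i)$, because the cumulative perturbation $|M^* \triangle M_i| \leq i \cdot O(\log n / \log\log n) \leq n^{1-\gamma/2}$ stays well below the Lemma~\ref{aux:exp1toexp2} threshold $qn/(10 d^2)$. Lemma~\ref{Lemma_expansion_paths} then produces a rainbow $(D_1 \cup D_2)$-$M_i$-alternating path $P_i$ from $u$ to $v$ of length $O(\log n / \log\log n)$, provided both $u, v$ avoid an exceptional set $B$ of size at most $4An/d$; I set $M_{i+1} := M_i \triangle E(P_i)$.

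The main obstacle is that $|B|$ can easily exceed $|U_X^i|$ (since $A$ is a constant while $d \ll n^{\gamma}$), so I cannot sidestep $B$ by picking a different uncovered vertex — I must tailor $C_1, C_2$ to the specific pair $(u, v)$ so that both are forced to be good. The hypothesis $|N_H(u) \cap V(G)| \geq 0.3pn$ gives $u$ at least $0.3pn$ distinct incident $H$-colours; I would pack $d$ of these lying in $C'_i$ into $C_1$ to force $d_{D_1}(u) = d$, first performing a short preliminary rotation at $u$ along an $H$-edge to free up enough unused colours at $u$ if most of $u$'s incident colours currently lie in $C(M_i)$ (the rotation preserves $|M_i|$ while relocating the uncovered vertex to the former matching partner of the rotated edge, and a bounded number of such rotations suffices). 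Once $d_{D_1}(u) = d$, a single-vertex expansion argument of the kind underlying Lemma~\ref{Lemma_expansion_one_vertex}, but seeded from the $d$-neighbourhood $N_{D_1}(u)$ rather than from a singleton of the bad set, forces $u \notin B_{D_1}$; the symmetric construction for $v$ gives $v \notin B_{D_2}$, and the $6d$ spare colours in $C'_i$ make $C_1 \cap C_2 = \emptyset$ achievable. After $T \leq n^{1-\gamma}$ augmentations, $M_T$ is a perfect rainbow matching of $H$, proving Theorem~\ref{Theorem_typical_technical}.
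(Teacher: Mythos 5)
Your overall framework—apply Lemma~\ref{Lemma_main_expansion}, union-bound over colour $d$-sets, and then grow the matching one edge at a time via Lemma~\ref{Lemma_expansion_paths}, tracking the edit distance with Lemma~\ref{aux:exp1toexp2}—is the right shape, and you correctly anticipate that the exceptional set $B$ of size $\Theta(n/d)$ is far too large to sidestep by choosing a different uncovered vertex. However, there is a genuine gap at the step you flag as the ``main obstacle,'' and your proposed fix is circular.

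The real obstruction is more severe than $u \in B$: the uncovered vertex $u$ may have \emph{no incident $H$-edge at all whose colour lies in $C'_i = C(H)\setminus C(M_i)$}. The vertex $u$ sees at most $n$ distinct colours (proper colouring, degree $\leq n$), while $M_i$ can already use $n-1$ colours, and there is no reason the $\geq 6d$ globally unused colours intersect the set of colours seen at $u$. In that situation $d_{D_1}(u)=0$ for every choice of $D_1\subseteq H[C'_i]$, so no alternating path can even start at $u$, no matter how $C_1$ is packed. Your ``preliminary rotation'' is meant to escape this, but it cannot: to rotate $M_i \mapsto M_i - xy + ux$ (relocating the uncovered vertex from $u$ to $y$) you need $M_i - xy + ux$ to remain rainbow, i.e.\ $c(ux)\notin C(M_i)\setminus\{c(xy)\}$. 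Since $ux$ and $xy$ share the vertex $x$ and the colouring is proper, $c(ux)\neq c(xy)$, so the rotation requires $c(ux)$ to be an unused colour at $u$—exactly the resource you are assuming does not exist. The rotation therefore presupposes what it is supposed to manufacture.

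The paper's resolution is structurally different and is the key new idea in this proof: randomly partition $V(G)$ and $C(G)$ into three parts and run Lemma~\ref{Lemma_main_expansion} separately on $G_1, G_2, G_3$ to obtain three \emph{colour- and vertex-disjoint} expanding matchings $M_1,M_2,M_3$, with $M = M_1\cup M_2\cup M_3$. The point is that when $x_0$ is forced to use an edge $x_0 y_1'$ whose colour $c_2$ is already present in the matching, one can arrange (Claim~\ref{claim:forbiddenvertices}) that $c_2$ sits specifically on an $M_2$-edge $x_2y_2$. Then a rainbow $D$-$M_2'$-alternating \emph{cycle} through $x_2y_2$ (again obtained from Lemma~\ref{Lemma_expansion_paths}, applied within $G_2$) can be switched to delete $x_2y_2$ from the matching while keeping $|M_2'|$ unchanged, thereby \emph{liberating} $c_2$; only then is $c_2$ available as a genuinely unused colour for the augmenting path at $x_0$. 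The same is done with $M_3$ for $y_0$. This ``kick-out-a-colour-via-an-alternating-cycle'' mechanism, made possible by the 3-way split and the disjointness of the $M_i$, is exactly what your single-expander iteration lacks; it is also why the theorem demands a surplus of $6d$ colours (to feed the six disjoint expander colour classes $D^1,\dots,D^6$ needed to run Lemma~\ref{Lemma_expansion_paths} three times with disjoint resources). Your alternative idea of seeding the expansion from $N_{D_1}(u)$ instead of a singleton does not help here because $N_{D_1}(u)$ may be empty for the reason above.
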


\noindent
The full power of the above theorem will only be used to prove our results about generalized Latin arrays. For our results about Latin squares and Steiner systems, a weakening of this result stated as Corollary~\ref{Corollary_typical_matching} will be sufficient.

The above theorem is proved using the approach described in the Introduction (see Section~\ref{subsec:nibble},  (S1) -- (S4)). Note that there are two graphs in the assumption of Theorem~\ref{Theorem_typical_technical}. Inside the typical graph $G$, we find a randomized rainbow matching $M$ of size $n-n^{1-\gamma}$ with  expansion properties with respect to any collection $D$ of $d$ colours of the graph $G$ . By these expansion properties we know that almost all $x\in X$ and $y\in Y$ have short rainbow $D-M$-alternating paths between them in $G$ (Lemma~\ref{Lemma_expansion_paths}). Below  think of $D$ being some subset of unused colours on $G$, sometimes these colours can come from the graph $H$. We iteratively increase the size of the matching until we get a perfect rainbow matching in $H$.
At each step we obtain a rainbow matching $M^i$ of size $|M^{i-1}|+1$, such that the edit distance between  each $M^i$ and $M$ is still sufficiently small. This guarantees that the expansion properties that $M$ originally had are still preserved for $M^i$. Note that since $G$ has 
at least $n+6d$ colours we always have at least $\Omega(\log{n}/\log{\log{n}})$ unused colours outside of $M$. 
 
For rather technical reasons, to perform switchings, we first randomly split $G$ into three graphs $G_1, G_2, G_3$ and find  randomized large matchings $M_1\subseteq G_1, M_2\subseteq G_2, M_3\subseteq G_3$ in these subgraphs using Lemma~\ref{Lemma_main_expansion}. We set $M=M_1\cup M_2\cup M_3$ to get a matching of size $n-n^{1-\gamma}$.  The advantage of splitting like this is that it now gives us three disjoint matchings with expansion properties which will be useful for finding \emph{disjoint} alternating paths/cycles for switching purposes. Such alternating paths are found using Lemma~\ref{Lemma_expansion_paths}. The way we use alternating paths/cycles for enlarging the matching  is illustrated in Figure~\ref{Figure_switching}. The idea is to first fix two vertices $x_0, y_0$ which we want to add to the matching.  Imagine $x_0$ and $y_0$ had edges $x_0y_1'$ and $y_0x_1'$ going to $M_1$ of colours still unused on $M=M_1\cup M_2\cup M_3$. Assume $x_1'$ is matched to $y_1$ in $M_1$ and $y_1'$ is matched to $x_1$in $M_1$. Suppose between  $x_1$ and $y_1$ we could find an alternating $D-M$ rainbow path $P$ (this is true for \emph{almost all} $x_1$ and $y_1$). Then we could switch the $M$-edges to non-$M$-edges and vice versa on the path $P\cup\{x_0y_1',x_1y_1', y_0x_1', x_1'y_1\}$. This would increase the size of the matching $M$ by one immediately. However, we cannot guarantee that such $x_1',y_1'$ will be present in $G_1$, that is, such that $c_2:=c(x_0y_1'),c_3:=c(y_0x_1')\notin C(M)$. But we can always guarantee that a choice of $x_1'$ and $y_1'$ will be present such that $c_2$ appears on $M_2$ and $c_3$ appears on $M_3$. So then if say $x_2y_2$ is the $c_2$-edge in $M_2$, by the expansion properties of $M_2$ we know there is a $D-M$-alternating rainbow path between $x_2$ and $y_2$ which together with the edge $x_2y_2$induces an alternating cycle along which if we ``switch'', that is, we make all the non-edges of $M_2$ edges of $M_2$  and vice versa, $M_2$ remains to be rainbow. We apply the same argument to kick out the colour $c_3$ from $G_3$. Now colours $c_2$ and $c_3$ become available to use in the matching, and thus we can do the aforementioned switching along the edges of the path $P\cup\{x_0y_1'\}\cup\{y_0x_1'\}$, thus increasing the matching $M$ by size one. Finally note that at each step $|M^i\triangle M^{i+1}|= O(\log{n}/\log{\log n})$, since we switch along at most three paths/cycles of $O(\log{n}/\log{\log n})$ length. Because of this after at most $O(n^{1-\gamma})$ steps, $|M^{i}\triangle M| \leq  O(n^{1-\gamma}\log{n}/\log{\log{n}}) \ll |M|$, thus $M^i$ will still have the expansion properties, therefore we can iterate this approach by having $M^i$ instead of $M$.
 
\begin{proof}
We can assume that $\eps\ll 1$ since any $(\eps, p, n)$-typical graph is also $(\eps', p, n)$-typical for all $\eps'<\eps$. Choose  auxiliary constants $q=k^{-1/9}$, $\gamma$ satisfying $n^{-1}\ll \gamma \ll \eps$. 

 We call colours of $G$ \emph{large}. Notice that by coloured  $(\eps, p, n)$-typicality of $G$, large colours have $(1\pm n^{-\eps})n$ edges and so there are  less than $n^{1+\eps}$ large colours.  Denote $C_0=C(H\setminus G)$.
 
Partition $V(G)$ and $C(G)$ into three sets $V_1, V_2, V_3$ and $C_1$, $C_2$, $C_3$  with each vertex/colour ending up in each $V_i/C_i$ independently with probability $1/3$. For $i,j=1,2,3$ we let $G_{i,j}$ to be the subgraph induced by the vertex set $V_i$ and by the colour set $C_j$. We also denote by $G_i=G_{i,i}$.
 
\begin{claim}\label{Claim_Gi_properties}
With positive probability
\begin{enumerate}[(i)]
    \item  For all $i,j\in \{1,2,3\}$, $G_{i,j}$ is $(\eps/8, p/3, n/3)$-typical.
    \item  For all $i,j\in \{1,2,3\}$, every vertex $v\in V(H)$ satisfies  $|N_{C_i\cup C_0}(v)\cap V_j|\geq p n/40$.
\end{enumerate}
\end{claim}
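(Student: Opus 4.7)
The plan is to show both (i) and (ii) hold with probability $1-o(1)$, which certainly implies the positive-probability statement in the claim. The two items can be handled essentially independently.

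For (i), my observation is that, for each fixed pair $(i,j)\in\{1,2,3\}^2$, the random triple $(X_i, Y_i, C_j)$ with $X_i=V_i\cap X$ and $Y_i=V_i\cap Y$ is precisely the output of construction (a) in Lemma~\ref{Lemma_random_subgraph_typical_graph} applied to $G$ with $q=1/3$: each vertex of $X$ lands in $X_i$ with probability $1/3$ independently, and similarly for $Y_i$ and $C_j$, with the three selections jointly independent because $X$, $Y$, $C(G)$ are disjoint. Lemma~\ref{Lemma_random_subgraph_typical_graph} then gives that $G_{i,j}$ is coloured $(\eps/8,p/3,n/3)$-typical (in particular $(\eps/8,p/3,n/3)$-typical) with probability at least $1-e^{-n^{1-\eps/2}}$, and a union bound over the nine pairs $(i,j)$ establishes (i) with failure probability at most $9e^{-n^{1-\eps/2}}=o(1)$.

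For (ii), I would fix $v\in V(H)$ and a pair $(i,j)$, and set $N=N_H(v)\cap V(G)$, so that $|N|\geq 0.3pn$ by hypothesis. For each $u\in N$ the events ``$u\in V_j$'' and ``$c(vu)\in C_i\cup C_0$'' are independent (vertex partition and colour partition were made independently), each with probability at least $1/3$: the colour event holds deterministically when $c(vu)\in C_0$ and with probability exactly $1/3$ when $c(vu)\in C(G)$. The crucial structural point is that as $u$ varies over $N$ the joint indicators ``$u\in V_j\text{ and }c(vu)\in C_i\cup C_0$'' are \emph{mutually} independent: the ``$u\in V_j$'' events touch disjoint vertex-partition coordinates because the $u$'s are distinct, and the ``$c(vu)\in C_i\cup C_0$'' events touch disjoint colour-partition coordinates because $H$ is properly edge-coloured, so the colours $\{c(vu):u\in N\}$ are pairwise distinct. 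Thus $|N_{C_i\cup C_0}(v)\cap V_j|$ is a sum of independent $\{0,1\}$-variables with mean at least $|N|/9\geq pn/30$, and a standard Chernoff bound yields that it falls below $pn/40$ with probability at most $e^{-\Omega(pn)}$. A union bound over the at most $2n$ vertices of $V(H)$ and the nine pairs $(i,j)$ completes (ii), using that $p$ is bounded below by a positive constant via $k^{-1}\ll p$, so $e^{-\Omega(pn)}=o(1/n)$.

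The only delicate ingredient is the mutual-independence verification in (ii); once that is pinned down, everything else is routine Chernoff-plus-union-bound, and (i) is a direct black-box application of the random-subgraph lemma. I do not foresee any serious obstacle: the constants ($0.3pn$, $pn/30$, $pn/40$) leave ample slack for the concentration step.
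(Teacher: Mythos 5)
Your proof is correct and follows essentially the same route as the paper's: Lemma~\ref{Lemma_random_subgraph_typical_graph}(a) with $q=1/3$ plus a union bound over the nine pairs $(i,j)$ for (i), and for each vertex a Chernoff bound on the sum of the $\geq 1/9$-probability indicators plus a union bound for (ii). The paper is terser --- it only mentions the diagonal $G_{i,i}$ before concluding for all nine $G_{i,j}$, and it does not spell out the mutual-independence verification that you give explicitly --- but the underlying argument is identical.
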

\begin{proof}
By Lemma~\ref{Lemma_random_subgraph_typical_graph} (a) applied with $q_{\ref{Lemma_random_subgraph_typical_graph}}=1/3$,  with  probability  at least $1-e^{-n^{1-\eps/2}}$, $G_i$ is $(\eps/8, p/3, n/3)$-typical for each $i=1,2,3$.  Thus (i) holds with probability at least $1-9e^{-n^{1-\eps/2}}$.

Property (ii) follows from Chernoff's bound. Fix a vertex $v\in V(H)$. Then every $y\in N_H(v)\cap V(G)$ is in  $N_{C_i\cup C_0}(v)\cap V_j$ independently with probability $\geq 1/9$. Indeed, if $c(vy)\not\in C(G)$ this happens when $y\in V_j$ which has probability $1/3$. When $c(vy)\in C(G)$, then this happens when both $y\in V_j$ and $c(vy)\in C_i$ which has probability $1/9$. So we get $\E[|N_{C_i\cup C_0}(v)\cap V_j|]\geq pn/30$.
So by Chernoff's bound, the probability that $|N_{C_i\cup C_0}(v)\cap V_j|]< p n/40$ is less  than $e^{-p n/960}$. Thus, with probability at least $1-9n^{-1}e^{-p n/960}$, (ii) holds.
\end{proof}

 \begin{claim}\label{Claim_Mi_properties}
 For each $i=1,2,3$, there is a rainbow matching $M_i$ in $G_i$ such that the following hold:
 \begin{enumerate}[(a)]
     \item  $|M_i|\geq n/3-n^{1-\gamma}$ 
     \item  For every set of $d$ large colours $D$,  define  $D_i$  to be the subgraph induced by edges on $V_i$ which have colours from $D$. Then the pair $(D_i, M_i)$ is a $(d,  q^{-4}, q, {n/3})$-expander.
 \end{enumerate}
 \end{claim}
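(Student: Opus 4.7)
The plan is to apply Lemma~\ref{Lemma_main_expansion} separately to each $G_i$ and to union bound over all choices of the colour set $D$. We condition on an outcome of the random partition satisfying Claim~\ref{Claim_Gi_properties}, so that each $G_i$ is coloured $(\eps/8,p/3,n/3)$-typical. Applying Lemma~\ref{Lemma_main_expansion} with parameter substitution $\eps\mapsto\eps/8$, $p\mapsto p/3$, $n\mapsto n/3$, $\gamma\mapsto 2\gamma$ (all hypotheses are routinely checked from $d=k\log n/\log\log n$, $q\ll p$, and $\gamma\ll\eps$) yields a randomised rainbow matching $M_i\subseteq G_i$ such that, for \emph{every} bipartite graph $D'$ on the bipartition of $G_i$ with $\Delta(D')\le d$ and at most $96(p/3)^{-2}(n/3)/d$ vertices of degree below $(p/3)d/6$, the conclusions of that lemma hold with probability at least $1-2e^{-(n/3)^{1-\eps/8}}$. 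Conclusion (i) of Lemma~\ref{Lemma_main_expansion} is exactly (a), and conclusion (ii) is exactly (b), so it suffices to verify the two structural hypotheses on $D_i$ for every relevant $D$ and then union bound.

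That $\Delta(D_i)\le d$ is immediate: $D_i$ uses only the $d$ colours in $D$ and the colouring of $G$ is proper. For the low-degree vertex count, observe that by pigeonhole there exists $j^*\in\{1,2,3\}$ with $|D\cap C_{j^*}|\ge d/3$. Apply Lemma~\ref{lem:fewisolated} to the typical graph $G_{i,j^*}$ (typical by Claim~\ref{Claim_Gi_properties}(i)) with the colour set $D\cap C_{j^*}$; its hypotheses hold since $d=k\log n/\log\log n$ and $k^{-1}\ll p$. The lemma yields at most $32(p/3)^{-2}(n/3)/(d/3)=96(p/3)^{-2}(n/3)/d$ vertices of $G_{i,j^*}[D\cap C_{j^*}]$ with degree below $(p/3)(d/3)/2=(p/3)d/6$. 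Since $G_{i,j^*}[D\cap C_{j^*}]$ is a subgraph of $D_i$, the same bound transfers to the low-degree vertices of $D_i$, matching the hypothesis of Lemma~\ref{Lemma_main_expansion} exactly.

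Finally, we union bound over $D$. There are fewer than $n^{1+\eps}$ large colours, so the number of candidate sets $D$ is at most $\binom{n^{1+\eps}}{d}\le n^{(1+\eps)d}=e^{O((\log n)^2/\log\log n)}$, which is negligible compared to $e^{(n/3)^{1-\eps/8}}$. Hence with probability $1-o(1)$, for every admissible $D$ and each $i\in\{1,2,3\}$ (via a further trivial union bound over three values of $i$), both conclusions of Lemma~\ref{Lemma_main_expansion} hold; this yields (a) (noting $(n/3)(n/3)^{-2\gamma}\le n^{1-\gamma}$) and (b). The principal technical point is the verification of the degree hypothesis on $D_i$: one cannot directly invoke Lemma~\ref{lem:fewisolated} on $D$ inside $G$ because $D_i$ is restricted to the vertex set $V_i$, but the pigeonhole reduction to $G_{i,j^*}[D\cap C_{j^*}]$ places us inside a typical slice where Lemma~\ref{lem:fewisolated} applies cleanly.
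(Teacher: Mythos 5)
Your proof is correct and follows essentially the same route as the paper: condition on Claim~\ref{Claim_Gi_properties}, apply Lemma~\ref{Lemma_main_expansion} to each $G_i$, verify the two structural hypotheses on $D_i$ (degree bound via properness of the colouring, low-degree count via pigeonhole on $C_1,C_2,C_3$ and Lemma~\ref{lem:fewisolated} applied to the typical slice $G_{i,j^*}$ followed by the ``subgraph of $D_i$'' monotonicity observation), then union-bound over the at most $n^{(1+\eps)d}$ choices of $D$. Your remark that one cannot invoke Lemma~\ref{lem:fewisolated} directly on $D$ inside $G$ because $D_i$ lives on $V_i$ only is a correct and worthwhile observation that the paper leaves implicit, and your explicit correction of the exceptional-probability exponent to $(n/3)^{1-\eps/8}$ is slightly more careful than the paper's shorthand; otherwise the arguments coincide.
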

\begin{proof}
 Fix $i=1,2,3$. Let $D$ be a set of $d$ large colours. By the pigeonhole principle there exists some $j\in \{1,2,3\}$ such that $|D\cap C_j|\geq d/3$. Since $G_{i,j}$ is coloured $(\eps/8, p/3, n/3)$-typical it follows from Lemma~\ref{lem:fewisolated} that  $G_{i,j}[D\cap C_j]$ has at most $32(p/3)^{-2}(n/3)/(d/3) = 96(p/3)^{-2}(n/3)/d$ vertices of degree $\leq (p/3) (d/3)/2 = pd/18$, which implies that so does $D_i$ (note that $D_i$ potentially has more colours but that can only increase the degrees of vertices). Therefore by Lemma~\ref{Lemma_main_expansion} (applied to $D$ and $G_i$) with probability at least  $1-2e^{-n^{1-\eps/8}}$  (i) and (ii) hold with respect to $D_i$.  Thus, by the union bound, the probability that $|M_i|< n/3-n^{1-\gamma}$  or there exists some  $D_i$ which is not 
   $(d,q^{-4}, q, {n/3})$-expander  is at most ${\binom{|C(G)|}{d}}2e^{-n^{1-\eps/8}} \leq  (n + n^{1-\eps})^d \cdot 2e^{-n^{1-\eps/8}}  \ll 1$.
\end{proof}

Let $M= M_1\cup M_2\cup M_3$.  We claim that $M$ can be ``extended'' to a perfect rainbow matching $M'$  of $H$ using colours of $C_0\cup (C(G)\setminus C(M))$ such that $|M'\triangle M|\leq 98n^{1-\gamma}\frac{\log n}{\log{d}}$.  Indeed, pick $r$ largest such that $|M'|=|M|+r$, $M'$ is rainbow and   $|M'\triangle M|\leq 49r\frac{\log n}{\log{d}}$. If $M'$ is not a perfect matching,  then there exist vertices $x_0\in X,y_0\in Y$  outside of $M'$.   From Claim~\ref{Claim_Mi_properties} (a), we have $|M|\geq 3(n/3-n^{1-\gamma})$ which gives $r\leq 3n^{1-\gamma}$.

First of all, note that $M$ must be missing at least $6d$ many  large colours (since there are at least $n+6d$ large colours in total).  For $j=1,2,\dots,6$, let $D^j$  be  disjoint collections of such large colours each of size $d$.  Note that since these colours are large, Claim~\ref{Claim_Mi_properties} tells us that for every $i=1,2,3$ and $j=1,2,\dots, 6$   the  pair $(D_i^j,M_i)$ is a $(d, q^{-4}, q, n/3)$-expander.  

Denote $M_i':=M_i\cap M'$, for all $i=1,2,3$. Note that   $|M_i'\triangle M_i|\leq |M'\triangle M|\leq 147n^{1-\gamma}\frac{\log n}{\log{d}}\leq q n/30d^2$. It follows that by Lemma~\ref{aux:exp1toexp2}, that  for every $i=1,2,3$ and $j=1,2,\dots, 6$  the  pair $(D_i^j,M_i')$ is a  $(d, q^{-4}, 2q, {n/3})$-expander.
 
Setting $A=q^{-4}, \eps'=2q, \ell=8 \left \lceil\frac{\log(n/3)}{\log(d/4A)}\right \rceil\leq \frac{16\log n}{\log d}$ notice
 that we have $4d^{-1/2}\leq A^{-1}\leq\eps'/100\ll 1$ and $d \log d \geq 8A^2 \log(n/3)$.
So by Lemma~\ref{Lemma_expansion_paths}, for each $i=1,2, 3$ there is a subset $J_i\subseteq V(G_i)$  of size at most $4n/q^4d$ such that for  all  $x,y\in G_i\setminus  J_i$ lying in different parts of the bipartition of $G_i$, there is a rainbow $(D^{2i-1}_i\cup D^{2i}_i)$-$(M_i')$-alternating path of  length  at most $\ell$  from $x$ to $y$ in $V(G_i)$. To finish the proof we need the following two simple claims, whose statements are illustrated by Figure~\ref{Figure_switching}).

\begin{claim}\label{claim:forbiddenvertices}
There is an edge $x_1y_1'\in M_1'$ such that $ x_1, y_1' \not\in J_1$ and  $x_0y_1'\in E(H)$ such that either $c(x_0y_1')\not\in C(M')$ or there exists an edge $x_2y_2\in M_2'$ such that $c(x_0y_1')=c(x_2y_2)$ with $x_2, y_2\not\in J_2$.
\end{claim}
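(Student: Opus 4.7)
The plan is a straightforward counting argument. First I would apply Claim~\ref{Claim_Gi_properties}(ii) with $i=2$ and $j=1$ to the vertex $x_0$, yielding at least $pn/40$ vertices $y_1' \in V_1 \cap Y$ with $x_0 y_1' \in E(H)$ and $c(x_0 y_1') \in C_0 \cup C_2$. The choice of $i=2$ is deliberate: colours in $C_2$ are precisely those that can appear on edges of $M_2'$ (which is what alternative (b) requires), while colours in $C_0$ never appear on $M'$ (since $C(M') \subseteq C_1 \cup C_2 \cup C_3$) and so trivially satisfy alternative (a). Call this pool $S_0$.

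Next I would shrink $S_0$ by excluding: (i) vertices not matched in $M_1'$; (ii) vertices $y_1' \in J_1$; and (iii) vertices whose $M_1'$-partner $x_1$ lies in $J_1$. Using Claim~\ref{Claim_Mi_properties}(a), the bound $|M_1 \triangle M_1'| \le |M \triangle M'| \le 147\, n^{1-\gamma} \log n/\log d$ established earlier in the proof, and the typicality estimate $|V_1 \cap Y| = (n/3)(1\pm(n/3)^{-\eps/8})$, the number of vertices lost in (i) is sub-polynomial in $n$; the total loss from (ii) and (iii) is at most $2|J_1| \leq 8n/q^4 d$. Since $p q^4 d \gg 1$ in our parameter regime, the refined set $S_1$ still has size at least, say, $pn/50$.

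Finally I would check the colour condition on $S_1$. If $c(x_0 y_1') \in C_0$, then (a) holds automatically; if $c(x_0 y_1') \in C_2 \setminus C(M_2')$, then (a) also holds. Otherwise $c(x_0 y_1') = c(x_2 y_2)$ for a unique edge $x_2 y_2 \in M_2'$, and the candidate is good unless that edge has an endpoint in $J_2$. The edges of $M_2'$ with an endpoint in $J_2$ number at most $|J_2| \leq 4n/q^4 d$, and since the edge-colouring of $H$ is proper, each such colour rules out at most one candidate (the unique edge of that colour incident to $x_0$). Hence at most $4n/q^4 d \ll pn/50$ candidates are blocked in this last step, and at least one good $y_1'$ survives.

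There is no real obstacle here; the whole argument is a union-bound count. The only thing one has to be careful about is verifying that each error term ($|J_i|$, $|M \triangle M'|$, typicality deviations of $|V_1 \cap Y|$) is of order $o(pn)$, which is immediate from the choices $q = k^{-1/9}$, $d = k\log n/\log\log n$, and $n^{-1} \ll \gamma \ll \eps \ll 1$.
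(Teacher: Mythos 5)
Your overall strategy (collect a pool of $pn/40$ candidates $y_1'$, then exclude a few sub-linear forbidden sets and observe that something survives) matches the paper's, and you correctly identify the exclusions coming from $J_1$, $J_2$, the deficiency $|V_1 \setminus V(M_1')|$, and the uniqueness argument via properness. But the colour analysis has a genuine error. You assert that ``colours in $C_0$ never appear on $M'$ (since $C(M')\subseteq C_1\cup C_2\cup C_3$)'' and similarly that $c(x_0y_1')\in C_2\setminus C(M_2')$ automatically yields outcome (a). Neither is true: $M'$ is obtained from $M=M_1\cup M_2\cup M_3$ by extending with edges whose colours are explicitly allowed to come from $C_0\cup(C(G)\setminus C(M))$, so $C(M')\subseteq C_0\cup C_1\cup C_2\cup C_3$, and a colour in $C_0$ or in $C_2\setminus C(M_2')$ can very well lie on one of the new edges of $M'\setminus M$ (this is exactly what the paper's exclusions (F3) and (F4) are for). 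In the special case $H=G$ one has $C_0=\emptyset$ and your claim about $C_0$ is vacuous, but Theorem~\ref{Theorem_typical_technical} is applied with $H\supsetneq G$ in the proof of Theorem~\ref{Theorem_ManySymbol_Rainbow_Version}, so you cannot assume this.

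The fix is straightforward but must be made: for $c\in(C_0\cup C_2)\setminus C(M_2')$ one has $c\in C(M')$ iff $c\in C(M'\setminus M)$, so you should additionally exclude those $y_1'$ with $c(x_0y_1')\in C(M'\setminus M)$; by properness there is at most one such $y_1'$ per colour, giving a bound of $|M'\setminus M|\leq|M'\triangle M|\leq 147n^{1-\gamma}\log n/\log d\ll pn$. You do list $|M\triangle M'|$ among the error terms at the end, but in your argument that quantity is invoked only to bound step (i) (unmatched vertices), not the colour issue, and the explicit (incorrect) reasoning ``$C_0$ colours trivially satisfy (a)'' shows the issue was genuinely overlooked rather than implicitly handled.
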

 \begin{proof}
 Recall that $|N_{C_2\cup C_0}(x_0)\cap V_1|\geq pn/40$ and so we have one of the following two options:
 \begin{enumerate}[(i)]
     \item  $|N_{C_0}(x_0)\cap V_1|\geq pn/80,$
     \item $|N_{C_2}(x_0)\cap V_1|\geq pn/80$,
 \end{enumerate}  

 \textbf{Case 1:} Let  $F(x_0)$  be the set of vertices $y_1'\in V_1$ satisfying one of the ``forbidden'' properties below.
 \begin{itemize} 
 \item [(F1)] $y_1'\in N_{C_0\cup C_2}(x_0)\cap V_1\setminus V(M_1')$. The number of these is at most $|V_1\setminus V(M_1')|\leq |V_1\setminus V(M_1)| + 2|M_1\triangle M_1'|\leq \frac{n^{1-\eps/8}}{3}+n^{1-\gamma}+ 294n^{1-\gamma}\frac{\log n}{\log{d}} \ll pn/80.$ 
\item [(F2)] $y_1'\in J_1$  or the vertex that $y_1$ is matched to in $M_1'$  is in $J_1$. The number of these is at most $|J_1|\leq  4n/q^4d \ll pn/80.$ 

\item [(F3)] $y_1'\in V_1$ such that $c(x_0y_1)\in C_0\cap C(M')$. Notice that this is possible as when we extended $M$ to $M'$ we potentially used some of the colours in $C_0$. However, the number of these is at most $|M'\setminus M|\leq 147n^{1-\gamma}\frac{\log n}{\log{d}}\ll pn/80.$
 \end{itemize}
 
Since $|F(x_0)|\ll |N_{C_0}(x_0)\cap V_1|$  we can pick a vertex $y_1\in N_{C_0}(x_0)\cap V_1$ not satisfying (F1)-(F3). Let $x_1$ be the vertex  that is matched to $y_1'$ in $M_1$.  It is easy to check that the following hold: $c(x_0y_1')\in C_0 \setminus C(M')$,  $x_1,y_1'\notin J_1$.
 
\textbf{Case 2:} In this case $F(x_0)$ will include the vertices $y_1'$ satisfying (F1) or (F2) and additionally the properties below.

 \begin{itemize} 
 \item [(F4)] $y_1'\in N_{C_2}(x_0)\cap V_1$ such that $c(x_0y_1')\in C_2\setminus C(M_2')$. The number of these is at most $|C_2\setminus C(M_2')|\leq |C_2\setminus C(M_2)| + |M_2\triangle M_2'|\leq \frac{n^{1-\eps/8}}{3}+n^{1-\gamma}+ 147n^{1-\gamma}\frac{\log n}{\log{d}} \ll pn/80.$
\item [(F5)] $y_1'\in N_{C_2}(x_0)\cap V_1$ such that $c(x_0y_1')\in C(M_2')$  but if we look at the edge $x_2y_2$ of $M_2'$ which has colour $c(x_0y_1')$ either $x_2\in J_2$ or $y_2\in J_2$. The number of these is at most $|J_2|\leq  4n/q^4d \ll pn/80.$ 
 \end{itemize}
 
Since $|F(x_0)|\ll |N_{C_2}(x_0)\cap V_1|$  we can pick a vertex $y_1'\in N_{C_2}(x_0)\cap V_1$ not satisfying (F1),(F2),(F4),(F5). Let $x_1$ be the vertex  that is matched to $y_1'$ in $M_1$ and let $x_2y_2$ be the edge in $M_2'$ of colour $c(x_0y_1)$ (by the choice of $y_1'$ such an edge exists).  Additionally, it is easy to check that the following hold: $x_1,y_1'\notin J_1$, $x_2,y_2\notin J_2$.
\end{proof}

\begin{claim}\label{claim:forbiddenvertices2}
There is an edge $x_1'y_1\in M_1'$ such that $ x_1'\neq x_1, y_1\neq y_1'$ and $x_1',y_1 \not\in J_1$ and  $y_0x_1'\in E(H)$ such that $c(y_0x_1')\neq c(x_0y_1')$ and  either $c(y_0x_1')\not \in C(M')$ or there exists an edge $x_3y_3\in M_3'$ such that $c(y_0x_1')=c(x_3y_3)$ with $x_3, y_3\not\in J_2$.
\end{claim}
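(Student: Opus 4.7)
The plan is to mimic the proof of Claim~\ref{claim:forbiddenvertices} almost verbatim, with the roles of $x_0$ and $y_0$ swapped (so we work on the other side of the bipartition), $C_2$ replaced by $C_3$, and $M_2'$ replaced by $M_3'$. The structural input we will use is the same: from Claim~\ref{Claim_Gi_properties}~(ii) applied to $y_0 \in V(H)$, we have $|N_{C_3 \cup C_0}(y_0) \cap V_1| \geq pn/40$, so by an averaging step we fall into one of two cases, (i) $|N_{C_0}(y_0) \cap V_1| \geq pn/80$, or (ii) $|N_{C_3}(y_0) \cap V_1| \geq pn/80$. In each case we will produce a forbidden set $F(y_0) \subseteq V_1$ of vertices we cannot use as $x_1'$, argue $|F(y_0)| \ll pn/80$, and then pick any $x_1' \in N_{C_0}(y_0) \cap V_1 \setminus F(y_0)$ (in case (i)) or $x_1' \in N_{C_3}(y_0) \cap V_1 \setminus F(y_0)$ (in case (ii)).

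The forbidden set will include the obvious analogues of (F1)--(F5) from Claim~\ref{claim:forbiddenvertices}, namely: vertices $x_1' \in V_1 \setminus V(M_1')$; vertices $x_1'$ such that either $x_1'$ or its $M_1'$-partner is in $J_1$; in case~(i), vertices $x_1'$ with $c(y_0x_1') \in C_0 \cap C(M')$; in case~(ii), vertices $x_1'$ with $c(y_0x_1') \in C_3 \setminus C(M_3')$; and in case~(ii), vertices $x_1'$ such that the edge of $M_3'$ of colour $c(y_0 x_1')$ has at least one endpoint in $J_3$. The bounds on each of these follow exactly as in the previous claim --- using $|V_1 \setminus V(M_1')| \ll pn/80$, $|J_1|, |J_3| \leq 4n/q^4d \ll pn/80$, $|M' \setminus M| \leq 147 n^{1-\gamma}\frac{\log n}{\log d} \ll pn/80$, and $|C_3 \setminus C(M_3')|$ similarly small.

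What is new here compared to Claim~\ref{claim:forbiddenvertices} are the additional requirements ``$x_1' \neq x_1$, $y_1 \neq y_1'$'' and ``$c(y_0 x_1') \neq c(x_0 y_1')$''. These cost us essentially nothing: enforcing $x_1' \neq x_1$ and its $M_1'$-partner $\neq y_1'$ excludes at most $2$ vertices from $V_1$, and enforcing $c(y_0 x_1') \neq c(x_0 y_1')$ excludes at most one vertex, since $H$ is properly edge-coloured and so there is at most one edge at $y_0$ of any given colour. Adding these $O(1)$ exclusions to $F(y_0)$ keeps $|F(y_0)| \ll pn/80$, so a valid $x_1'$ still exists. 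Setting $y_1$ to be the $M_1$-partner of $x_1'$ and, in case~(ii), letting $x_3 y_3 \in M_3'$ be the edge of colour $c(y_0 x_1')$, one checks directly from the exclusions that all required properties hold.

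The only step that could conceivably fail is the comparison of the $\ll$ estimates, but these are exactly the same as in Claim~\ref{claim:forbiddenvertices} once one verifies that $|J_3|$, $|M_3 \triangle M_3'|$, $|C_3 \setminus C(M_3')|$ satisfy the same bounds as their $i=2$ analogues, which is immediate from the symmetric construction of $G_1, G_2, G_3$ and the symmetric statement of Claim~\ref{Claim_Mi_properties}. So the main obstacle is really just bookkeeping; there is no new idea needed beyond a careful transcription.
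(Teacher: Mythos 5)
Your proposal is correct and matches the paper's own treatment: the paper simply states that the proof of this claim is identical to that of Claim~\ref{claim:forbiddenvertices}, with $C_3,M_3',J_3$ replacing $C_2,M_2',J_2$ and the extra conditions $x_1'\neq x_1$, $y_1\neq y_1'$, $c(y_0x_1')\neq c(x_0y_1')$ affecting the forbidden-set count only negligibly, and you have spelled out that transcription faithfully (including the $O(1)$ cost of the new exclusions and the correct appeal to Claim~\ref{Claim_Gi_properties}~(ii) with $i=3,j=1$). One small remark: you write $J_3$ for the set avoided by $x_3,y_3$, whereas the claim statement as printed says $J_2$; since $x_3,y_3\in V_3=V(G_3)$ and $J_3\subseteq V(G_3)$, your $J_3$ is what is actually intended and the ``$J_2$'' in the statement appears to be a typo.
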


\begin{proof}
The proof  is identical to the proof of Claim~\ref{claim:forbiddenvertices}, with extra conditions $x_1'\neq x_1$,$y_1'\neq y_1$, $c(y_0x_1')\neq c(x_0y_1')$ which  affect the calculations on $F(y_0)$ only by negligible amount. So we omit the proof.
\end{proof}

We may assume both Claim~\ref{claim:forbiddenvertices} and Claim~\ref{claim:forbiddenvertices2} hold with the second outcome, as otherwise the proof is even simpler (in that case $P_2$ or $P_3$ can be taken as empty, in the proof below). Let $x_1,y_1, x_1',y_1', x_2,y_2,x_3,y_3$ be as in the claims.  For each $i=1,2,3$  there is a  rainbow $(D_{2i-1}\cup D_{2i})$-$(M_i')$-alternating path  $P_i$ of  length  at most $\ell$  from $x_i$ to $y_i$ in $V_i$. Note that the paths $P_1$, $P_2$, $P_3$ don't share any vertices or colours. Finally let $M''$ be obtained from $M'$ by switching the matching edges along alternating cycles $P_2\cup \{x_2 y_2\}$ and $P_3\cup\{x_3y_3\}$, and by switching along the alternating path $P_1\cup\{x_0y_1',x_1y_1',x_1'y_0,x_1'y_1\}$,   (see Figure~\ref{Figure_switching}).  It is not hard to see that this is a rainbow matching with $|M''|=|M'|+1=|M|+r+1$ and such that 

\begin{figure}
  \centering
    \includegraphics[width=0.8\textwidth]{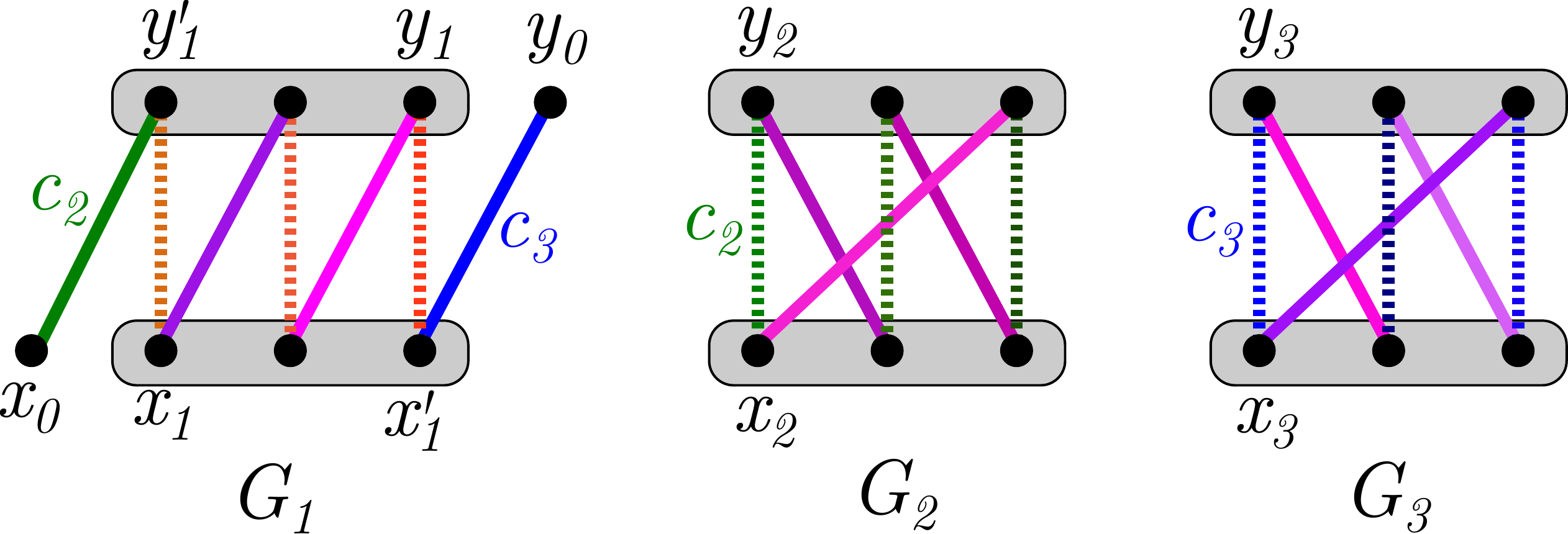}
  \caption{The alternating path $P_1\cup\{x_1y_1',x_1'y_1,x_0y_1',y_0x_1'\}$ and alternating cycles $P_2\cup \{x_2 y_2\}$ and $P_3\cup\{x_3y_3\}$. The dashed edges denote edges of $M'$ which are removed from the matching. The solid edges denote edges of colours in $\cup_{j=1}^6{D^j}\cup C_0$ which are added to the matching.}
\label{Figure_switching}
\end{figure}

$$|M''\triangle M|\leq |M'\triangle M| + 3 \ell+6\leq  49r\frac{\log n}{\log{d}} + 48\frac{\log{n}}{{\log d} }+6 \leq  49(r+1)\frac{\log n}{\log{ d}}.$$
This contradicts the maximality of  $M'$, therefore $M'$ must have been a perfect rainbow matching of $H$.   
\end{proof}

As a corollary of the above theorem, coloured typical graphs have rainbow matchings covering all but $\log n/\log\log n$ vertices.
\begin{corollary}\label{Corollary_typical_matching}
Let $n^{-1}\ll k^{-1}\ll p \leq 1$, $n^{-1}\ll \eps<1$ and fix $d=\frac{k\log n }{\log\log n}$.
 Let $G$ be coloured $(\eps, p, n)$-typical bipartite graph with parts of size $\geq n$ and at least $n$ colours. Then $G$ has a rainbow matching of size $n-6d$.
\end{corollary}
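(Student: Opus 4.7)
The plan is to reduce the corollary to Theorem~\ref{Theorem_typical_technical} by passing to an induced subgraph on parts of size exactly $n-6d$ and then invoking the theorem with $H$ equal to that subgraph. Concretely, I would pick arbitrary $X' \subseteq X$ and $Y' \subseteq Y$ with $|X'|=|Y'|=n' := n-6d$, let $G'$ be the subgraph of $G$ induced on $X' \cup Y'$, and apply the theorem to the pair $G' \subseteq G'$ with parameters $(n', \eps/2, p, k')$ for a suitable constant $k' \leq k$.

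First I would verify that $G'$ remains coloured $(\eps/2, p, n')$-typical. Removing at most $12d$ vertices changes every vertex degree, codegree, and colour-class quantity of $G$ by at most $12d = O(\log n/\log\log n)$, which is negligible compared with the original error term $n^{-\eps} pn$ in the typicality of $G$. For instance, every $v \in V(G')$ satisfies $d_{G'}(v) \in [d_G(v) - 12d,\, d_G(v)]$, so $d_{G'}(v) = pn(1 \pm n^{-\eps}) \pm 12d = pn'(1 \pm (n')^{-\eps/2})$; the codegree and colour-degree estimates for $G'$, $G'_{X', C'}$, and $G'_{Y', C'}$ follow by identical arithmetic. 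Next, I would check that $|C(G')| \geq n = n' + 6d$. Since $G$ is properly coloured and coloured $(\eps, p, n)$-typical, the edges of any single colour $c \in C(G)$ form a matching of size at least $pn/2 \gg 12d$, so after removing only $12d$ vertices at least one edge of this matching survives in $G'$. Hence every colour of $G$ is still present, and $|C(G')| \geq n$.

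Finally, I would apply Theorem~\ref{Theorem_typical_technical} to $G' \subseteq G'$. Choosing $k' \leq k$ large enough and using $n' \leq n$, we have $6 \cdot k' \log n'/\log\log n' \leq 6d$, so the colour hypothesis $|C(G')| \geq n' + 6(k'\log n'/\log\log n')$ holds; the condition $|N_{H'}(v) \cap V(G')| \geq 0.3 p n'$ is immediate because $H' = G'$ and $d_{G'}(v) \geq 0.9 p n'$. The theorem then produces a rainbow perfect matching of $G'$, which is a rainbow matching of $G$ of size $n-6d$, as required. There is no genuine obstacle in this reduction; the only care needed is in tracking constants so that the $6d$ in the corollary absorbs the analogous term after the constant $k$ is adjusted and $\eps$ is slightly decreased to accommodate the loss in the error terms.
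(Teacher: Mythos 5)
Your proposal follows essentially the same route as the paper's proof: truncate $G$ to a balanced subgraph $G'$ with parts of size exactly $n' = n-6d$, check that $G'$ is coloured $(\eps/2, p, n')$-typical with at least $n'+6d$ colours, and apply Theorem~\ref{Theorem_typical_technical} with $H=G=G'$ to get a perfect rainbow matching of $G'$. One quantitative slip should be corrected, though. You claim the truncation removes ``at most $12d$'' vertices, but the hypotheses only give $n\le |X|,|Y|\le n+n^{1-\eps}$, so the number deleted from each side can be as large as $n^{1-\eps}+6d$. This is \emph{not} negligible compared with the original error term $pn^{1-\eps}$ (it is of the same order or larger when $p<1$), so your justification ``negligible compared with the original error term in the typicality of $G$'' is not quite right as stated. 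The correct observation, as in the paper, is that $n^{1-\eps}+6d \ll (n')^{1-\eps/2}$, which is precisely why the typicality exponent must be relaxed from $\eps$ to $\eps/2$. Your displayed estimates do land on $(n')^{-\eps/2}$, so the conclusion stands, but the intermediate reasoning should track the $n^{1-\eps}$ contribution explicitly; the same remark applies to your colour-survival argument, where the relevant comparison is $2(n^{1-\eps}+6d)\ll pn$, not ``$12d$ vs. $pn/2$''.
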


\begin{proof}
Let $X,Y$ be the parts of the bipartition of $G$. By the assumptions, we have $n\leq |X|,|Y|\leq n+n^{1-\eps}$.
Fix $n'=n-6d$, and delete vertices from each part of $G$ to get a balanced bipartite graph $G'$ with parts of size $n'$.

Notice that the number of vertices deleted from each part of $G$ is between $6d$ and $n^{1-\eps}+6d\ll n'^{1-\eps/2}$.   We claim that $G'$ is $(\eps/2,p,n')$-typical. Indeed, for every vertex $v\in V(G')$, $pn(1-n^{-\eps}) - n^{1-\eps} -6d \leq d_{G'}(v)\leq pn(1+n^{-\eps})$, thus $d_{G'}(v)=pn'(1\pm n'^{-\eps/2})$. Similarly, it can be shown that for any $u,v\in V(G')$, $d_{G'}(u,v)=p^2n'(1\pm n'^{-\eps/2})$, and that for every colours $c,c'$ we have $e_{G'}(c), e_{G'}(c')=pn'(1\pm {n'}^{-\eps/2})$ and $|V_{G'}(c)\cap  V_{G'}(c')\cap X|, |V_{G'}(c)\cap  V_{G'}(c')\cap Y|=p^2n'(1\pm {n'}^{-\eps/2})$. \Alexey{[Previously we were only checking the old definition of colour typical here]}
This implies that $G'$ is $(\eps/2, p, n')$-typical and has at least $n=n'+6d$ colours.

Thus the assumptions of  Theorem~\ref{Theorem_typical_technical} are satisfied with with $H$ and $G$ being the same graph, $G'$. It follows that $G'$ has a perfect rainbow matching, which induces a rainbow matching of size $n'=n-6d$ in $G$, as required. 
\end{proof}

We are now ready to reduce the proof of one of our main   results from Corollary~\ref{Corollary_typical_matching}.
\begin{proof}[Proof of Theorems~\ref{main-ryser} and~\ref{Theorem_Ryser_Rainbow_Version}]
As mentioned in Section~\ref{Section_proof_sketch}, Theorems~\ref{main-ryser} and~\ref{Theorem_Ryser_Rainbow_Version} are equivalent. So we will just prove Theorem~\ref{Theorem_Ryser_Rainbow_Version}.

We may assume $n\geq n_0$ for some implicit $n_0$ sufficiently large, as otherwise the theorem  is vacuously true for $k=\frac{n_0\log{\log{n_0}}}{\log{n_0}}$. So we choose $k$ such that $1\ll k \ll n_0$. 
Let $K_{n,n}$ be properly $n$-edge-coloured. Since every colour forms a perfect matching, we have that $K_{n,n}$ is coloured
$(1,1,n)$-typical with $\eps=1 \gg 1/n_0$. Thus, we can apply  Corollary~\ref{Corollary_typical_matching} to this $K_{n,n}$ and obtain a rainbow matching of size $n-k\log{n}/\log{\log{n}},$ as desired.
\end{proof}
 
\section{Transversals in Generalized Latin squares}
In this section we prove Theorem~\ref{main-manysymbols}. We  will use the following result of Pokrovskiy, Montgomery and Sudakov \cite{montgomery2019decompositions}.

\begin{theorem}\label{thm:PMS}There exists $\alpha>0$ such that for all $n^{-\alpha}/\alpha < \eps <1$ the following holds. If $K_{n,n}$ is properly edge coloured graph with at most $(1-\eps)n$ colours having more than $(1-\eps)n$ edges then $K_{n,n}$ has $(1-\eps)n$ edge disjoint perfect rainbow matchings.
\end{theorem}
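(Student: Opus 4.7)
The plan is to deduce this theorem from Theorem~\ref{Theorem_typical_technical} by iteratively extracting perfect rainbow matchings from $K_{n,n}$, while at each step constructing an auxiliary coloured typical subgraph to feed into that theorem.

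The first step is to use the hypothesis to exhibit coloured typicality. Let $C_H$ denote the set of ``heavy'' colours, i.e.\ colours with more than $(1-\eps)n$ edges; by assumption $|C_H|\leq(1-\eps)n$, but also note that the hypothesis forces the remaining ``light'' colours together to cover at least an $\eps$-fraction of the edges, so many light colours exist and they are spread out. By taking $G^*$ to be a random subgraph of $K_{n,n}$ obtained by retaining each colour with some probability $q$ (and possibly sampling vertices), an analogue of Lemma~\ref{Lemma_random_subgraph_typical_graph} shows that $G^*$ is coloured $(\eps_0,p_0,n)$-typical with high probability, with parameters controlled using the bound on $|C_H|$ and a two-scale analysis that treats heavy and light colours separately.

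The iteration itself is the following. Set $H_0=K_{n,n}$. For $t=0,1,\dots,(1-\eps)n-1$, construct an auxiliary coloured $(\eps_t,p_t,n)$-typical subgraph $G_t^*\subseteq H_t$ as above, verify the degree condition $|N_{H_t}(v)\cap V(G_t^*)|\geq 0.3\,p_t n$ (which is immediate from $K_{n,n}$-completeness together with the spanning nature of $G_t^*$), and apply Theorem~\ref{Theorem_typical_technical} with $H=H_t$, $G=G_t^*$ to obtain a perfect rainbow matching $M_{t+1}\subseteq H_t$. Set $H_{t+1}=H_t\setminus E(M_{t+1})$ and continue. The output is the required collection of $(1-\eps)n$ edge-disjoint perfect rainbow matchings.

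The main obstacle is sustaining the construction of $G_t^*$ through $(1-\eps)n$ iterations. As matchings accumulate, each colour used in $M_{t+1}$ loses one edge, so colour class sizes shrink; heavy colours can gradually become light, and light colours can be exhausted, potentially reducing the total number of colours below the threshold $n+6d$ required by Theorem~\ref{Theorem_typical_technical}. The hypothesis that only $(1-\eps)n$ colours are initially heavy is precisely what ensures enough colours persist throughout the process. A quantitative Chernoff/Azuma/Talagrand analysis in the spirit of Lemmas~\ref{Lemma_degree_concentration} and~\ref{Lemma_random_subgraph_typical_graph} tracks $(\eps_t,p_t,|C(G_t^*)|)$ and shows that with high probability the hypotheses of Theorem~\ref{Theorem_typical_technical} remain valid at every step.

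A tempting one-shot alternative---randomly partitioning the edges of $K_{n,n}$ into $(1-\eps)n$ edge-disjoint slots, each coloured typical, then finding a perfect rainbow matching in each in parallel---does not work directly: each such slot would have only $O(1)$ edges per vertex, far below the density $p=\Omega(1)$ that Theorem~\ref{Theorem_typical_technical} demands. For this reason an iterative (or absorption-style) approach seems necessary, and controlling the parameter degradation across the iteration is where the technical difficulty of the argument lies.
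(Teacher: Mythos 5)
This theorem is not proved in the paper at all: it is imported verbatim from Montgomery, Pokrovskiy and Sudakov \cite{montgomery2019decompositions}, and the present paper merely cites it as a black box in the proof of Theorem~\ref{Theorem_ManySymbol_Rainbow_Version}, using it to reduce to the case where almost all colours have almost $n$ edges. So there is no internal proof here to compare your proposal against, and the referenced proof is a self-contained argument in that earlier paper, not a deduction from Theorem~\ref{Theorem_typical_technical} of the present one.

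Setting that aside, your proposed route has a concrete gap that I don't see how to close. Coloured $(\eps,p,n)$-typicality requires \emph{every} colour present in $G_t^*$ to have $(1\pm n^{-\eps})p_t n$ edges, and Theorem~\ref{Theorem_typical_technical} additionally requires at least $n+6d$ colours. But the hypothesis of the statement says that at most $(1-\eps)n$ colours are ``heavy'' (more than $(1-\eps)n$ edges); the remaining colours can be arbitrarily light, conceivably with only $O(1)$ edges each. After any vertex/colour sampling, such light colours appear with essentially zero edges, so they violate the uniform colour-class-size requirement; and restricting $G_t^*$ to the heavy colours alone is impossible because $(1-\eps)n < n+6d$. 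The ``two-scale analysis treating heavy and light colours separately'' that you invoke is precisely the point at which the argument needs an actual idea, and the definition of coloured typicality leaves no room for such a split. The density decay you flag yourself compounds this: once $t$ is close to $(1-\eps)n$, the residual graph $H_t$ has density roughly $\eps$, and since $\eps$ can be as small as $n^{-\alpha}/\alpha$ this falls outside the regime $n^{-1}\ll k^{-1}\ll p$ that Theorem~\ref{Theorem_typical_technical} demands, so even for heavy colours the iteration cannot be sustained all the way.
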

We'll also use the following lemma giving small rainbow matchings in coloured bipartite graphs.

\begin{lemma}\label{Lemma_small_matching_min_degree}
Let $G$ be a properly edge-coloured balanced bipartite graph with $\delta(G)\geq d$, parts of size $n$, with every colour appearing at most $n/12$ times and such that $n\geq 3d+12$. Then $G$ has a rainbow matching of size at least  $3d/2$.
\end{lemma}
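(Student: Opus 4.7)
The plan is to argue by contradiction: assume a largest rainbow matching $M$ in $G$ has size $m<3d/2$, and reach a contradiction by combining a double-counting bound (which uses the colour-frequency cap $n/12$) with a structural constraint coming from the absence of length-three rainbow augmenting paths. The concluding step will be a quadratic inequality in $m$ that just barely fails at the threshold $m=3d/2$ thanks to the slack $n\geq 3d+12$.

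First I would set up notation: write $M=\{x_1y_1,\dots,x_my_m\}$ with $x_i\in L$, $y_i\in R$; let $F_L=L\setminus V(M)$ and $F_R=R\setminus V(M)$ (each of size $n-m$); and for each $u\in F_L$ define $A_u=\{y_i\in V(M)\cap R:uy_i\in E(G),\,c(uy_i)\notin C(M)\}$ and $B_u=\{y\in N_G(u):c(uy)\in C(M)\}$. Maximality of $M$ forbids an edge from $u$ to $F_R$ in a new colour, so $N_G(u)=A_u\sqcup B_u$ and hence $|A_u|+|B_u|\geq d$. Since every colour spans at most $n/12$ edges of $G$, $\sum_{u\in F_L}|B_u|\leq mn/12$, and summing gives $\sum_u|A_u|\geq d(n-m)-mn/12$. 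Reindexing through $\alpha_i:=|\{u\in F_L:y_i\in A_u\}|$, and defining the mirror sets $B_v\subseteq\{x_1,\dots,x_m\}$ and $\beta_i:=|\{v\in F_R:x_i\in B_v\}|$, both $\sum_i\alpha_i$ and $\sum_i\beta_i$ are at least $d(n-m)-mn/12$.

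The conceptual core of the proof, which I expect to be the main (though short) step, is the trichotomy: for every $i$, either $\alpha_i=0$, or $\beta_i=0$, or $\alpha_i=\beta_i=1$. Indeed, if distinct $u,u'\in F_L$ both have $y_i$ in their $A$-set, then $c(uy_i)\neq c(u'y_i)$ (proper colouring at $y_i$) and both lie outside $C(M)$; if some $v\in F_R$ also has $x_i\in B_v$, then at least one of the paths $u\,y_i\,x_i\,v$ or $u'\,y_i\,x_i\,v$ is a length-three rainbow augmenting path for $M$, contradicting maximality. Partitioning the indices into $T_1,T_2,T_3,T_4$ according to this trichotomy, with sizes $t_1+t_2+t_3+t_4=m$, and using $\alpha_i\leq |F_L|=n-m$, I get $\sum_i\alpha_i\leq t_1(n-m)+t_3$ and $\sum_i\beta_i\leq t_2(n-m)+t_3$. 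Adding these, using $t_1+t_2\leq m-t_3$ and $n-m>2$, collapses everything to $m(n-m)\geq 2d(n-m)-mn/6$, i.e., $mn/6\geq (n-m)(2d-m)$.

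To finish I would analyse $h(m):=mn/6-(n-m)(2d-m)$, which the previous step shows must be non-negative. A direct computation gives $h(3d/2)=d(3d-n)/4$, which is at most $-3d$ under $n\geq 3d+12$; and $h'(m)=7n/6+2d-2m$ is positive throughout $[0,3d/2]$ (since $7n/12+d>3d/2$ automatically), so $h$ is strictly increasing on this interval and $h(m)\leq h(3d/2)<0$ for every $m<3d/2$, contradicting $h(m)\geq 0$ and forcing $m\geq 3d/2$. The main obstacle is the trichotomy step; after that, the colour cap $n/12$ and the hypothesis $n\geq 3d+12$ are calibrated precisely so that the resulting quadratic inequality tips over at the threshold $3d/2$.
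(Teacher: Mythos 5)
Your proof is correct and takes essentially the same approach as the paper: both argue by contradiction from a maximum rainbow matching, use the absence of length-three rainbow alternating augmenting paths as the key structural constraint, and double-count unused-colour edges from the free vertices to the matched side against the colour-frequency cap $n/12$. The paper's bookkeeping is slightly more direct (it separately lower-bounds by $3d/4$ the number of matched vertices on each side having at least two unused-colour edges to the free side and then uses that a matching edge cannot have both endpoints of this type), whereas you combine both sides into a single quadratic inequality in $m$, but the underlying combinatorial observation and the role of the slack $n \geq 3d+12$ are the same.
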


\begin{proof}
Let the parts of $G$ be $A,B$ with $|A|=|B|=n$.
Let $M_1$ be a maximum rainbow matching in $G$. For contradiction, assume $|M_1|<3d/2$. Let $A_1:=V(M_1)\cap A$, $B_1:=V(M_1)\cap B$, let $A_0:=A\setminus A_1$ and $B_0:=B\setminus B_1$.  
 
Let $B_1'$ be the set  of vertices in $B_1$ which have at least two edges of unused  colours going to  $A_0$. Similarly define $A_1'$. Note that if there is any edge $ab\in M_1$ such that $a\in A_1'$ and $b\in B_1'$ then we can get a larger matching by replacing $ab$ by different coloured edges from $a$  to $B_0$ and $b$ to $A_0$, thus contradicting the maximality of $M_1$. It follows that  $|M_1|\geq |A_1'|+ |B_1'|$.  By the minimum degree condition we have $e_G(A_0, B)\geq d |A_0|$. Note also that all edges of unused colours must be adjacent to $A_1$ or $B_1$. Let $\tilde{e}_G(A_0, B_1)$ be the number of edges going from $A_0$ to $B_1$ using only unused colours. Using that every colour occurs $\leq n/12$ times and $|M_1|=|B_1|$, we have
$$\tilde{e}_G(A_0, B_1)\geq e_G(A_0, B) -|M_1|n/12 \geq d|A_0|-|B_1|n/12.$$

On the other hand, from the definition of $B_1'$, $$\tilde{e}_G(A_0, B_1)\leq |A_0||B_1'|+ |B_1|.$$

Thus, we get that $|B_1'|\geq d- \frac{|B_1|}{|A_0|} (1+n/12)  \geq 3d/4$ (using $|B_1|\leq 3d/2$, and $|A_0|\geq n-3d/2$ and $n\geq 3d+12$). Similarly, $|A_1'|\geq  3d/4$. Therefore we get that $|M_1|\geq |A_1'|+|B_1'|\geq 3d/2$.
\end{proof}

Before proving Theorem~\ref{Theorem_ManySymbol_Rainbow_Version} we explain main ideas. The basic idea is to employ Theorem~\ref{Theorem_typical_technical}. Given a proper edge-colouring of $K_{n,n}$ with roughly $n\log{n}/\log{\log{n}}$ many colours, by Theorem~\ref{thm:PMS}, we may assume that at least $n-o(n)$ colours appear $n-o(n)$ times. Call these colours \emph{large}. 
The rest of the colours will be so-called \emph{small} colours. Note that small colours might even appear only once in the entire graph $K_{n,n}$. However, since there are many of them, that is, roughly  of order $n\log{n}/\log{\log{n}}$, we can greedily select a rainbow matching  $M_0$ of size at least $O(\frac{\log{n}}{\log{\log{n}}}) +t$ containing only small colours (this is done in Claim~\ref{claim:greedy}). Now look at the  graph obtained from  $K_{n,n}$ by keeping only the edges of large colours
and deleting the vertices of $M_0$ (note that in particular we exclude all the colours appearing on $M_0$). This graph might have some \emph{bad} vertices of low degree, since they were adjacent to many edges of small colours in  $K_{n,n}$. However, using the property that large colours appear $n-o(n)$  times and there are $n-o(n)$ many of them one can  prove that there are only few such bad vertices. So we can delete them as well and call the remaining graph $G$. We show that $G$ is coloured typical. Also note that $G$ contains $n-t=n-|M_0|+O(\frac{\log{n}}{\log{\log{n}}})$ large colours.
Finally let $H$ to be  the original $K_{n,n}$ minus the vertices and colours of $M_0$ removed. After checking that the graphs $H$ and $G$  satisfy all the conditions of Theorem~\ref{Theorem_typical_technical}
we obtain a rainbow matching $M$ in $H$ of size $|V(H)|=n-|M_0|$. Then $M_0\cup M$ is a rainbow matching of size exactly $n$ in $K_{n,n}$.

\begin{proof}[Proof of Theorem~\ref{Theorem_ManySymbol_Rainbow_Version}] We may assume $n\geq n_0$ for some sufficiently large $n_0$. As otherwise the theorem is true vacuously for  $k_0=\frac{n_0\log{\log{n_0}}}{\log_{n_0}}$. Choose $k$ such that $1\ll k \ll n_0$. Let $\alpha$ be derived from Theorem~\ref{thm:PMS}. Fix $d= \frac{k\log{n}}{72\log{\log {n}}}$  and $ \eps_0:=n^{-\alpha/2}$.  
  
Let $K_{n,n}$ be properly coloured with at least $72nd$ colours. By Theorem~\ref{thm:PMS}, we may assume that more than $(1-\eps_0)n$ colours have more than $(1-\eps_0)n$ edges. We call such colours \emph{large}. If a colour has less than $(1-\eps_0)n$ edges we call it  \emph{small}.

Choose $t$, so that the number of large colours is $n-t$. If the number of large colours is $>n$, then we instead fix $t=0$. This way $0\leq t\leq n^{1-\alpha/2}$ always holds.

\begin{claim}\label{claim:greedy}There exists a rainbow matching of small colour edges of size $t+6d$. 
\end{claim}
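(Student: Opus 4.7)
The plan is to build $M_0$ by a greedy extension, one edge at a time. Suppose we have already produced a rainbow matching $M_{i-1}\subseteq G_s$ of $i-1$ distinct small-colour edges (where $G_s$ is the spanning subgraph of $K_{n,n}$ on small-colour edges), and we wish to extend to size $i\le t+6d$. A successful extension requires a small-colour edge of a new colour with both endpoints outside $V(M_{i-1})$. Counting the forbidden small-colour edges: those incident to $V(M_{i-1})$ number at most $2(i-1)n$ (using the trivial bound $\Delta(G_s)\le n$), and those of used colour number at most $(i-1)(1-\eps_0)n<(i-1)n$ (each small colour class being a matching of size $<(1-\eps_0)n$). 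Hence at most $3(i-1)n$ small-colour edges are forbidden, and the greedy extends whenever the total number $E_s$ of small-colour edges exceeds $3(i-1)n$.

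Two lower bounds on $E_s$ drive the analysis. First, since there are $n-t$ large colours and each can appear at any vertex at most once, every vertex has at least $t$ incident small-colour edges, so $E_s\ge nt$. Second, each colour contributes at least one edge, and the number of small colours is
$$|C_s|\ge 72nd-L\ge 72nd-\tfrac{n}{1-\eps_0}\ge 71nd,$$
using that each large colour has more than $(1-\eps_0)n$ edges so $L\le n/(1-\eps_0)$. Hence $E_s\ge\max(nt,\,71nd)$. Plugging $E_s\ge 71nd$ into the extension criterion shows that the greedy succeeds through all $t+6d$ steps whenever $71nd>3(t+6d)n$, i.e., whenever $t\le 17d$, which handles a first sub-case.

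For $t>17d$ the naive greedy count is too weak, and we need to refine it. The key observation is the complementary upper bound $E_s\le nt+\eps_0n^2$ (from $E_L\ge(n-t)(1-\eps_0)n$), which implies that at most $12E_s/n\le 24n^{1-\alpha/2}$ small colours have more than $n/12$ edges; almost all small colours are ``tiny''. Restricting the greedy to tiny small colours sharpens the used-colour term in the forbidden count from $(i-1)n$ to $(i-1)n/12$, so extension now succeeds whenever the number of tiny small-colour edges exceeds $\tfrac{25}{12}(i-1)n$, and the count $|C_s^{\mathrm{tiny}}|\ge 71nd-24n^{1-\alpha/2}\ge 70nd$ extends the reach of the greedy up to roughly $33d$. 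To close the remaining gap we seed the greedy with a Lemma~\ref{Lemma_small_matching_min_degree}-style augmenting-path argument: if $M$ is any maximum rainbow matching of small colours with $|M|=k$, the sets $A'\subseteq X\cap V(M)$ and $B'\subseteq Y\cap V(M)$ of matched endpoints with at least two unused-small-colour neighbours on the opposite unmatched side satisfy $|A'|+|B'|\le k$ (no edge of $M$ lies in $A'\times B'$, otherwise a $P_3$-switch would enlarge $M$); combining with the min-degree bound $E_s\ge nt$ and the edge counting $e_{G_s}(X_0,Y_0)\le k(1-\eps_0)n$ yields $|M|\ge 2t/3$. Using this as a seed and extending by a further $t/3+6d$ edges via the tiny-colour greedy gives the claim.

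The main obstacle will be executing this combination cleanly in the intermediate regime where $t$ is comparable to $n^{1-\alpha/2}$: neither lower bound on $E_s$ dominates the required threshold $3(t+6d)n$, the tiny-colour restriction must be justified quantitatively, and the arithmetic linking the extremal seed $|M|\ge 2t/3$ to the greedy tail extension must be carefully bookkept so that the excluded-edge counts in the tail still fit within $E_s^{\mathrm{tiny}}$.
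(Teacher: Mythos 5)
Your proposal is not quite right; there is a structural gap that goes beyond "careful bookkeeping," and you have in fact identified its location yourself but underestimated its severity.

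Your seeding step is fine: running the Lemma~\ref{Lemma_small_matching_min_degree}-style augmenting argument on the subgraph of \emph{all} small-colour edges, where the min degree is $t$ but colours can appear up to $(1-\eps_0)n$ times, yields (assuming a maximum matching of size $k\ll n$) the bounds $|A'|,|B'|\geq t-k-o(k)$ and hence $k\geq 2t/3-o(t)$. Note the gain factor here is $2/3<1$, in contrast to the $3/2$ of Lemma~\ref{Lemma_small_matching_min_degree}, precisely because medium colours violate the lemma's "$\leq n/12$ edges per colour" hypothesis. The shortfall $t/3+6d$ you must fill by greedy is therefore $\Theta(t)$ when $t\gg d$. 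This is where the argument breaks: at step $j$ of the tiny-colour tail the forbidden tiny-edge count is $\sim\tfrac{25}{12}(|M|+j)n$, which at the end is $\sim\tfrac{25}{12}(t+6d)n$, while \emph{both} of your lower bounds on tiny edges, namely $|C_s^{\mathrm{tiny}}|\geq 70nd$ and $E_{\mathrm{tiny}}\geq (t-m)n$, fall short (the first because $t$ can be as large as $n^{1-\alpha/2}\gg d$, the second because $(t-m)n<\tfrac{25}{12}(t+6d)n$ identically). So the tail greedy cannot add $\Theta(t)$ edges, and the regime $27d\lesssim t\leq n^{1-\alpha/2}$ is not covered by your argument in any sub-case.

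The paper's proof takes a qualitatively different route. It splits on the number $m$ of medium colours. When $m\geq t+6d$, one just greedily picks one edge per medium colour, using that each has $\geq n/12 \geq 3(t+6d)$ edges. When $t-12d\leq m<t+6d$, one greedily picks $18d$ tiny-colour edges (tiny colours are abundant) plus $t-12d$ medium-colour edges. In the main case $m<t-12d$, the crucial move is to apply Lemma~\ref{Lemma_small_matching_min_degree} \emph{to the tiny-colour subgraph only}, where the hypothesis "every colour has $\leq n/12$ edges" genuinely holds and the min degree is $t-m\geq 12d$; the lemma then returns a tiny-colour matching of size $3(t-m)/2>t-m+6d$, a $3/2$-factor gain above the min degree. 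One then adds $m$ medium-colour edges greedily. The total $3(t-m)/2+m=(3t-m)/2\geq t+6d$ is exactly what is needed. The key idea you are missing is to separate medium from tiny \emph{before} invoking the extremal lemma, so that the $3/2$ gain (rather than the weaker $2/3$ gain) is available, eliminating the need for a long greedy tail.
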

\begin{proof} If a small colour appears  less than $n/12$ times we call it \emph{tiny}, oherwise \emph{medium}. Let $m$ be the number of  medium colours.

If $m\geq t+6d$, then, using that $n/12 \geq 3(t+6d)$, we can greedily pick one edge per medium colour and obtain a rainbow matching of size $t+6d$. 
So we may assume the number of medium colours is less than  $t+6d$. As the total number of edges is $n^2$, the number of large colours must be less than $(1+2\eps_0)n$. Thus there are at least $kn\log{n}/\log\log n -  (1+2\eps_0)n-t-6d \geq kn\log{n}/2\log{\log{n}}$ tiny colours.  Furthermore, we may assume $m<t-12d$. Indeed, we can greedily pick a rainbow matching  $M_{\textit{tiny}}$ of tiny colours of size $18d$. We can do this because each edge in $M_{\textit{tiny}}$ forbids $2n$ edges which intersect it, so it forbids at most $2n$ tiny colours, but the number of available tiny colours is at least $kn\log{n}/2\log{\log{n}} = 36dn$. Thus, if $m \geq t-12d$, then we can find a rainbow matching $M_{\textit{medium}}$ of medium colours of size $t-12d$ greedily in $K_{n,n}\setminus V(M_{\textit{tiny}})$, since each  medium colour appears at least  $n/12 \geq 3(t+6d)$ times. Then $M_{\textit{medium}} \cup M_{\textit{tiny}}$ is a matching of small colours of size $t+6d$.

So we may assume  $m<t-12d$. Let $G$ be the subgraph of $K_{n,n}$ induced by tiny colours. Note that, by definition of $t$, every vertex is incident to at least $t$ edges of small colours. Since there are $m$ medium colours, we obtain that $\delta(G)\geq t-m \geq 12d$. We can apply Lemma~\ref{Lemma_small_matching_min_degree} to $G$ and obtain a rainbow matching  $M_{\textit{tiny}}$ of tiny colours of size at least $3(t-m)/2 > t-m+6d$, since $m < t-12d$. Again in $K_{n,n}\setminus V(M_{\textit{tiny}})$ we can greedily pick a rainbow matching  $M_{\textit{medium}}$ of size $m$ of medium colours, since each one of these colours appears at least $n/12 \geq 3(t+6d)$ times.  Taking $M_{\textit{medium}} \cup M_{\textit{tiny}}$ finishes the proof.
\end{proof}

Let $M_0$ be a rainbow matching of size $t+6d$ from the above claim.   Let $V_{small}$ be the set of vertices
which have more than $2\sqrt{\eps_0} n$ edges of small colours passing through them. Note that $|V_{small}|\leq 2\sqrt{\eps_0}n$ (otherwise, we would get more that $2\eps_0n^2$ small colour edges in the graph, contradicting ``more than $(1-\eps_0)n$ colours have more than $(1-\eps_0)n$ edges''). Let $G$ be obtained from $K_{n,n}$ by removing all edges of small colours  and all vertices from $V(M_0)\cup V_{small}$. 
Let $H$ be $K_{n,n}$ with all colours and vertices of $M_0$ removed. It is easy to see that $G\subset H$. Next we check that we can apply Theorem~\ref{Theorem_typical_technical}  to $G$ and $H$.

Notice that $H$ is balanced bipartite with parts of size $n'=n-t-6d$.  Notice that the parts in $G$ have size $\geq n'- |V_{\textit{small}}|\geq n'-2\sqrt{\eps_0}n = n'-2n^{1-\alpha/4}\geq n'(1-n'^{-\alpha/5})$ and also that the number of colours in $G$ is $\geq (1-\eps_0)n\geq n'(1-n'^{-\alpha/5})$. Every vertex $v\in V(H)$ satisfies $|N_H(v) \cap V(G)|\geq n-2|M_0|-|V_{small}|\geq n-2\sqrt{\eps_0}n -2(t+6d) \geq 0.3 n'$.

Next we show that $G$ is coloured $(\alpha/5, 1, n')$-typical. Using the fact that $G$ consists of edges of only large colours and that vertices in $G$ are adjacent to at most $2\sqrt{\eps_0} n$  small coloured edges in  the original graph $K_{n,n}$, we get that the following is true for any $u,v\in V(G)$ on the same side of $G$ and colours $c,c'\in C(G)$.

\begin{eqnarray*}
    d_G(v) &\geq& n - 2\sqrt{\eps_0} n -|V_{small}|-2|M_0|
    \geq (1-n'^{-\alpha/5})n'  \\
        d_G(u,v) &\geq& n - 4\sqrt{\eps_0} n -|V_{small}|-2|M_0|
    \geq (1-n'^{-\alpha/5})n' \\
    |E_G(c)|&\geq& (1-\eps_0)n-|V_{small}|-2|M_0| 
    \geq (1-n'^{-\alpha/5})n'  \\
    |V_G(c)\cap V_G(c')\cap X|&\geq& (1-2\eps_0)n-|V_{small}|-|M_0)|\geq (1-n'^{-\alpha/5})n' \\
    |V_G(c)\cap V_G(c')\cap Y|&\geq& (1-2\eps_0)n-|V_{small}|-|M_0|\geq (1-n'^{-\alpha/5})n' 
\end{eqnarray*}

Note that this is enough to conclude that all three graphs $G$, $G_{X,C}$ and $G_{Y,C}$ are (uncoloured) $(\alpha/5, 1, n')$-typical. Finally $G$ contains 
$n-t=n'+6d$ large colours. So $G$ and $H$ satisfy all the assumptions of Theorem~\ref{Theorem_typical_technical}, therefore we obtain  a perfect rainbow matching  $M$ in $H$ (whose colours, by definition, are disjoint from $M_0$). Finally, $M\cup M_0$ is a perfect rainbow matching in $K_{n,n}$.
\end{proof}

\section{Large matchings in Steiner systems}

In this section we improve the bound on Brouwer's conjecture about matchings in Steiner triple systems.
\begin{proof}[Proof of Theorem~\ref{main-steiner}] We may assume $n\geq n_0$ for $n_0$ sufficiently large. Choose ${n_0}^{-1}\ll k^{-1}\ll 1$ and fix $d=\frac{k\log n}{6\log \log{n}}$.
We assume $V(S)=\{x_1,x_2,\dots, x_n\}$. In a Steiner triple system we have $n\equiv 1$ or $3 \pmod 6$.
We'll first prove the result when   $n\equiv  3 \pmod 6$. In the other case, the same proof works if we apply it to a subgraph of $S$ formed by deleting a vertex. 
Let $G$ be  an auxiliary bipartite simple graph with bipartition $X=Y=V(S)$, colour set $V(S)$, and edge $ab$ having colour $c$ whenever $\{a,b,c\}\in E(S)$. Using that $S$ is a Steiner triple system, notice that $G$ is properly $n$-edge-coloured $K_{n,n}$ minus a perfect rainbow matching. It has codegrees $n-2$ and hence it is in particular coloured $(1-o(1), 1, n)$-typical.

We randomly construct a bipartite graph $H$ as follows. Partition $[n]$ into three disjoint sets  $I_X,I_Y,I_C$ by putting independently every $i$ in one of the sets $I_X,I_Y,I_C$ with probability $1/3$. Let $A=\{x_i:i\in I_X\}$, $B=\{x_i:i\in I_Y\}$, $C=\{x_i:i\in I_C\}$. Let $H$ be the subgraph of  $G$ consisting of edges from $A$ to $B$ having a colour in $C$. We claim that the  following simultaneously hold with positive probability:
\begin{enumerate}
    \item [(P1)] $H$ is properly coloured $(1/8, 1/3, n/3)$-typical.
    \item [(P2)]$|A|=|B|=|C|=n/3$.
\end{enumerate}

We show that (P1) holds with high probability and  (P2) holds  with positive probability, thus the claim will follow.

\textbf{(P1) holds with probability at least  $1-o(n^{-3})$.}

We can apply Lemma~\ref{Lemma_random_subgraph_typical_graph}  to $H$ as it is easy to check that it satisfies the assumptions in (b) with $q_{\ref{Lemma_random_subgraph_typical_graph}}=1/3$, $\eps_{\ref{Lemma_random_subgraph_typical_graph}}=1-o(1), p_{\ref{Lemma_random_subgraph_typical_graph}}=1$. 
Indeed, since $S$ is $3$-uniform, we have that for every edge $e\in G$ going through $x_i\in X, x_j\in Y, x_{\ell}\in C(G)$ the indices $i,j,\ell$ are distinct.
Thus Property (i) holds with probability at least $1-e^{-n^{1-\eps/2}} \geq  1-o(n^{-3})$.

\textbf{(P2) holds with probability at least $n^{-3}$.}

Notice that out of all the possible outcomes of the random variables $|A|,|B|,|C|$, the outcome  $|A|=|B|=|C|=n/3$ is the most likely one which happens with probability at least $1/n^3$. (The outcome $|A|=a, |B|=b, |C|=c$ has probability $\frac{1}{3^{n}} \binom{n}{a,b,c}$. For $n\equiv 0 \pmod{ 3}$ the multinomial coefficient $\binom{n}{a,b,c}$ is maximized when $a=b=c$).

Now we are ready to apply Corollary~\ref{Corollary_typical_matching}  to $H$.  We obtain a rainbow matching $M$ in $H$ of size at least $n/3-6d = n/3-k\log{n}/\log{\log{n}}$. Now it easy to see that the triples $M_S=\{(a,b, c(ab))| ab\in M\}$ induce a hypergraph matching in $S$ of the same size. Indeed, the fact that $(a,b, c(ab))$ is an edge of $S$ for all $ab\in M$ follows by  definition of $G$. To see that $M_S$ is a matching notice that for distinct edges $a_1b_1,a_2b_2\in M$, all four endpoints  $a_1,a_2,b_1,b_2$ correspond to four distinct vertices in $S$ because $(A,B,C)$ induce  a partition of $V(S)$ and $M$ is a matching. Finally $c(a_1b_1)\neq c(a_2b_2)$ since $M$ is rainbow and $c(a_1b_1),c(a_2b_2)$ are distinct from  $a_1,a_2,b_1,b_2$ since $(A,B,C)$ induce a partition of $V(S)$. 
\end{proof}

\section{Concluding remarks}
A far reaching generalisation of the Ryser-Brualdi-Stein conjecture was proposed in 1975 by Stein \cite{stein1975transversals}. He defined an equi-$n$-square as an $n\times n$ array filled with $n$ symbols such that every symbol appears \emph{exactly $n$ times}. Notice that Latin squares are equi-$n$-squares, but there are many equi-$n$-squares which are not Latin. Stein \cite{stein1975transversals} conjectured that all equi-$n$-squares contain a transversal of size $n-1$. If true, this would imply that Latin squares have size $n-1$ transversals.

Recently, the second and third author \cite{pokrovskiy2019counterexample} disproved Stein's Conjecture by constructing equi-$n$-squares without transversals of size $n-\log n/42$. On the other hand, our Theorem~\ref{main-ryser} gives transversals 
in Latin squares of size at least $n-O(\log n/\log \log n)$. Thus, combining these two results, we obtain a full separation 
between Latin and equi-$n$-squares. 

Despite being false, Stein's Conjecture remains one of the outstanding problems in the area. 
In particular it would be very interesting to determine whether it is true asymptotically i.e. is it true that every equi-$n$-square has a transversal of size $n-o(n)$.
Here, the best currently known result is due to Aharoni, Berger, Kotlar, and Ziv \cite{ABKZ}, who used topological methods, to show that  equi-$n$-squares always have a transversal of size at least $2n/3$.

All our results can be rephrased as results about finding large matchings in $3$-uniform hypergraphs. Here the results are about \emph{linear} 3-uniform hypergraphs i.e. ones where every pair of vertices are contained in at most one edge. As mentioned in the introduction, $n\times n$ Latin squares correspond to linear $n$-regular, $3$-partite, $3$-uniform hypergraphs with $n$ vertices in each part. 
Using the technique of R\"odl's nibble, there have been general results proved about finding large matchings in linear regular (and nearly-regular) hypergraphs. In particular Alon, Kim, and Spencer \cite{alon1997nearly} showed that linear $3$-uniform, $pn$-regular hypergraphs of order $n$ have matchings of size $n-O(n^{1/2}\log^{3/2}n)$. Our results show that if additionally a certain graph associated with the hypergraph is pseudorandom, then the matching can cover all but $O(\log/\log\log n)$ vertices. Specifically, for a $3$-uniform hypergraph $\mathcal H$, define its shadow $\partial \mathcal H$ to be the graph formed by replacing every edge of $\mathcal H$ by a triangle. Then Corollary~\ref{Corollary_typical_matching} is equivalent to the following.
 \begin{theorem}\label{Theorem_hypergraph_tripartite}
Let $n^{-1}\ll k^{-1}\ll p \leq 1$, $n^{-1}\ll \eps<1$. Let $\mathcal H$ be a $3$-uniform, tripartite linear hypergraph with partition $(V_1,V_2,V_3)$, $|V_1|=|V_2|=|V_3|=n$. Suppose that for all $i \neq j$, the induced subgraph  $\partial \mathcal H[V_i\cup V_j]$ of the shadow   between $V_i$ and $V_j$ is $(\eps, p, n)$-typical.
 Then $\mathcal H$ has a matching of size $n-\frac{k\log n }{\log\log n}$.
\end{theorem}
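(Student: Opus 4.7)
My plan is to reduce Theorem~\ref{Theorem_hypergraph_tripartite} directly to Corollary~\ref{Corollary_typical_matching} by passing to the coloured-graph formulation described in Section~\ref{Section_proof_sketch}. Concretely, I form a bipartite graph $G$ on the vertex set $V_1 \cup V_2$, with an edge $v_1 v_2$ (coloured by $v_3 \in V_3$) whenever $\{v_1, v_2, v_3\} \in E(\mathcal{H})$. Linearity of $\mathcal{H}$ ensures that no two edges of $G$ incident with the same vertex receive the same colour: if $v_1 v_2$ and $v_1 v_2'$ both had colour $v_3$, the pair $\{v_1, v_3\}$ would lie in two distinct edges of $\mathcal{H}$. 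The colour set is $V_3$ with $|V_3| = n$, so $G$ is a properly edge-coloured bipartite graph on $(V_1, V_2)$ with $n$ colours, and a rainbow matching in $G$ of size $m$ corresponds to an $\mathcal{H}$-matching of size $m$ (the three vertices of each edge lie in different parts $V_1, V_2, V_3$, and distinct edges share no vertex).

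The next step is to verify that $G$ is coloured $(\varepsilon, p, n)$-typical in the sense of (P4)--(P5). The uncoloured graph $G$ is precisely $\partial\mathcal{H}[V_1 \cup V_2]$, which is $(\varepsilon, p, n)$-typical by hypothesis. The auxiliary graph $G_{V_1, C}$ has an edge $v_1 v_3$ exactly when there is some $v_2 \in V_2$ with $\{v_1, v_2, v_3\} \in E(\mathcal{H})$, i.e.\ exactly when $v_1 v_3 \in E(\partial\mathcal{H}[V_1 \cup V_3])$; this is $(\varepsilon, p, n)$-typical by the $\{1,3\}$ case of our hypothesis. An identical argument for $G_{V_2, C}$ uses the $\{2,3\}$ case. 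Hence $G$ is coloured $(\varepsilon, p, n)$-typical and has $n$ colours.

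Applying Corollary~\ref{Corollary_typical_matching} with constant $k/6$ in place of $k$ therefore produces a rainbow matching in $G$ of size $n - 6 \cdot \frac{(k/6) \log n}{\log \log n} = n - \frac{k \log n}{\log \log n}$, which pulls back through the correspondence to a matching of the same size in $\mathcal{H}$. Since the entire argument is just a translation between equivalent formulations followed by a single invocation of a prior result, there is no real obstacle: all the technical work has already been done in establishing Corollary~\ref{Corollary_typical_matching}, and the only thing to check carefully is that the ``coloured'' pieces of the typicality definition translate into the pairwise shadow hypotheses, which they do by construction.
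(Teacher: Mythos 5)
Your proposal is correct and follows exactly the reduction the paper has in mind (the paper merely asserts that Theorem~\ref{Theorem_hypergraph_tripartite} is equivalent to Corollary~\ref{Corollary_typical_matching} without writing out the translation). The key points you verify — that linearity gives a proper colouring, that $G_{V_1,C}=\partial\mathcal H[V_1\cup V_3]$ and $G_{V_2,C}=\partial\mathcal H[V_2\cup V_3]$ so the three pairwise shadow hypotheses become exactly (P4) and (P5), and that rainbow matchings in $G$ pull back to matchings in $\mathcal H$ — are precisely the checks needed, and you have carried them out correctly.
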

If we only assume $(\eps, p, n)$-regularity rather than $(\eps, p, n)$-typicality then the hypergraph above is nearly $pn$-regular and is only known to have a matching of size $n-n^{1-\gamma}$ (e.g. from Lemma~\ref{Lemma_variant_of_MPS_nearly_perfect_matching}). With the added typicality condition we get a much larger matching. For non-tripartite hypergraphs we can prove the following analogue.
 \begin{theorem}\label{Theorem_hypergraph_general}
Let $n^{-1}\ll k^{-1}\ll p\leq 1$, $n^{-1}\ll \eps<1$. Let $\mathcal H$ be a $3$-uniform linear hypergraph on $n$ vertices. Suppose that for vertex $v$ we have $|N_{\partial\mathcal H}(v)|=(1\pm n^{-\eps})pn$ and
for every pair of vertices $u,v$, $|N_{\partial\mathcal H}(v)|=(1\pm n^{-\eps})pn$ and $|N_{\partial\mathcal H}(u)\cap N_{\partial\mathcal H}(v)| =(1\pm n^{-\eps})p^2n$.
Then $\mathcal H$ has a matching of size $n-\frac{k\log n }{\log\log n}$.
\end{theorem}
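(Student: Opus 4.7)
The plan is to reduce Theorem~\ref{Theorem_hypergraph_general} to the already-established tripartite case (Corollary~\ref{Corollary_typical_matching}) by a random tripartition, in essentially the same manner as the Steiner triple system argument in Theorem~\ref{main-steiner}. There the hypergraph is encoded as a properly edge-coloured bipartite graph, a random tripartition of the common index set is applied via Lemma~\ref{Lemma_random_subgraph_typical_graph}(b) to produce a coloured typical subgraph to which the general matching result applies, and the resulting rainbow matching lifts back to a hypergraph matching. The only adjustment needed is to verify that the encoding is coloured typical for a general pseudorandom linear $3$-uniform hypergraph, not just a Steiner triple system.

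Concretely, I will label $V(\mathcal H) = \{x_1, \dots, x_n\}$ and define a bipartite coloured graph $G$ on bipartition $(X, Y)$ with colour set $C$, where $X, Y, C$ are three indexed copies of $V(\mathcal H)$: an edge $x_i y_j$ of colour $c_k$ is present exactly when $\{x_i, x_j, x_k\} \in E(\mathcal H)$. Linearity of $\mathcal H$ makes this a proper edge-colouring, and $3$-uniformity forces the three indices $i, j, k$ to be pairwise distinct for every such coloured edge. I will then show that $G$ is coloured $(\eps, p, n)$-typical: degrees $d_G(x_i)$ equal $|N_{\partial \mathcal H}(x_i)|$, codegrees $|N_G(x_i) \cap N_G(x_{i'}) \cap Y|$ equal $|N_{\partial \mathcal H}(x_i) \cap N_{\partial \mathcal H}(x_{i'})|$, the auxiliary graphs $G_{X,C}$ and $G_{Y,C}$ are isomorphic to (a bipartite double of) $\partial \mathcal H$, and the colour-class quantities $|E_G(c_k)|$ and $|V_G(c_k) \cap V_G(c_\ell) \cap X|$ also reduce to degree/codegree data in $\partial \mathcal H$; the hypotheses of the theorem make all of these pseudorandom.

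Next, partition $[n]$ into $I_X \sqcup I_Y \sqcup I_C$ by placing each index independently into one of the three sets with probability $1/3$, and set $A = \{x_i : i \in I_X\}$, $B = \{x_i : i \in I_Y\}$, $C' = \{x_i : i \in I_C\}$; define $H$ to be the subgraph of $G$ consisting of the colour-$C'$ edges between $A$ and $B$. The distinct-indices hypothesis of Lemma~\ref{Lemma_random_subgraph_typical_graph}(b) is exactly what was verified above, so with probability at least $1 - e^{-n^{1-\eps/2}}$ the graph $H$ is coloured $(\eps/8, p/3, n/3)$-typical. The multinomial-coefficient argument from the Steiner system proof gives $|A| = |B| = |C'| = \lfloor n/3 \rfloor$ with probability at least $n^{-3}$ (handling $n \not\equiv 0 \pmod 3$ by first deleting one or two vertices of $\mathcal H$, which costs at most $2$ in the final matching); since $e^{-n^{1-\eps/2}} \ll n^{-3}$, both events co-occur for some outcome. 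Fixing such a tripartition, Corollary~\ref{Corollary_typical_matching} applied to $H$ yields a rainbow matching of size $\lfloor n/3 \rfloor - O(\log n / \log \log n)$, and each rainbow edge $x_i y_j$ of colour $c_k$ lifts to the hyperedge $\{x_i, x_j, x_k\} \in E(\mathcal H)$; distinct rainbow edges lift to vertex-disjoint triples because $A, B, C'$ are pairwise disjoint and the matching is rainbow, giving the required matching in $\mathcal H$.

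I do not anticipate a substantive obstacle. The only genuinely new step relative to the Steiner system proof is the coloured typicality check of $G$, which is routine but involves verifying all five clauses of the definition from the data on $\partial \mathcal H$; the extra flexibility here (versus the Steiner case where all codegrees are exactly one) is precisely absorbed by the $(1 \pm n^{-\eps}) p^2 n$ tolerance. Once this is in place, every subsequent step is a literal transcription of the Steiner triple system argument. A minor cosmetic point: since a $3$-uniform hypergraph on $n$ vertices has matching number at most $\lfloor n/3 \rfloor$, the natural reading of the conclusion is that the matching covers all but $O(\log n/\log\log n)$ vertices, i.e.\ has size $\lfloor n/3 \rfloor - O(\log n/\log\log n)$, which is what the plan delivers.
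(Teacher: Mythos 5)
Your proposal is correct and is essentially the paper's own argument: the paper's entire proof of this theorem consists of one sentence pointing out that the Steiner-triple-system proof carries over verbatim, with the sole change that the encoded bipartite graph $G$ is now coloured $(\eps, p, n)$-typical rather than $(1-o(1),1,n)$-typical, and your typicality verification (degrees, codegrees, colour-class counts all reducing to degree/codegree data in $\partial\mathcal H$) is exactly that observation spelled out. You are also right that the stated conclusion ``matching of size $n - k\log n/\log\log n$'' is a typo for a bound of the form $n/3 - O(\log n/\log\log n)$, since a $3$-uniform hypergraph on $n$ vertices has matching number at most $\lfloor n/3\rfloor$.
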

This theorem is proved identically to Theorem~\ref{main-steiner}. Indeed, the only change that needs to be made is to observe that the graph $G$ constructed in that proof will be $(\eps,p,n)$-typical (rather than $(1-o(1),1,n)$-typical as in Theorem~\ref{main-steiner}). Due to applications to Latin squares and Steiner triple systems, it is worthwhile to study further the hypergraphs appearing in Theorems~\ref{Theorem_hypergraph_tripartite} and~\ref{Theorem_hypergraph_general}. In particular it would be interesting to determine if they  always have matchings of size $n-O(1)$ or not.

\bibliographystyle{abbrv}
\bibliography{Ryser}

\end{document}